\DeclareMathAlphabet{\mathpzc}{OT1}{pzc}{m}{it}
\let\hat=\widehat
\let\tilde=\widetilde
\numberwithin{equation}{subsection}
\newtheorem{theorem}{Theorem}
\newtheorem{proposition}[equation]{Proposition}
\newtheorem{lemme}[equation]{Lemma}
\newtheorem{corollaire}[equation]{Corollary}
\theoremstyle{remarque}
\newtheorem{remarque}[equation]{Remark}
\theoremstyle{fact}
\newtheorem{definition}[equation]{Definition}
\DeclareMathOperator{\res}{res}
\DeclareMathOperator{\DR}{DR}
\DeclareMathOperator{\Diag}{Diag}
\DeclareMathOperator{\AS}{AS}
\DeclareMathOperator{\Hom}{Hom}
\DeclareMathOperator{\adj}{adj}
\DeclareMathOperator{\ord}{ord}
\DeclareMathOperator{\id}{id}
\DeclareMathOperator{\Ker}{Ker}
\DeclareMathOperator{\Spec}{Spec}
\def\cartesien{\ar@{}[rd]|{\square}}
\DeclareMathOperator{\RHom}{\mathcal{RH}om}
\DeclareMathOperator{\GL}{GL}
\DeclareMathOperator{\Sk}{Sk}
\DeclareMathOperator{\End}{End}
\DeclareMathOperator{\univ}{univ}
\DeclareMathOperator{\Irr}{Irr}
\DeclareMathOperator{\Sol}{Sol}
\DeclareMathOperator{\cosk}{cosk}
\DeclareMathOperator{\Jump}{Jump}
\DeclareMathOperator{\Isom}{Isom}
\DeclareMathOperator{\Char}{char}
\DeclareMathOperator{\Id}{Id}
\DeclareMathOperator{\MCHom}{\mathcal{H}om}
\DeclareMathOperator{\Real}{Re}
\DeclareMathOperator{\sk}{sk}
\DeclareMathOperator{\St}{St}
\DeclareMathOperator{\AdSk}{AdSk}
\DeclareMathOperator{\Int}{Int}
\DeclareMathOperator{\Lie}{Lie}
\DeclareMathOperator{\Sh}{Sh}
\DeclareMathOperator{\iso}{iso}
\author[J.-B.~Teyssier]{Jean-Baptiste Teyssier}
\curraddr{KU Leuven. Department of Mathematics. Celestijnenlaan 200B, Leuven. Belgium}
\email{jeanbaptiste.teyssier@kuleuven.be}
\title{Skeletons and moduli of Stokes torsors}
\begin{document}
\begin{abstract}
We prove an analogue for Stokes torsors  of Deligne's skeleton conjecture and deduce from it the representability of the functor of relative Stokes torsors by an affine scheme of finite type over $\mathds{C}$. This provides, in characteristic 0, a local analogue of the existence of a coarse moduli for skeletons with bounded ramification, due to Deligne. 
As an application, we use the geometry of this moduli to derive quite strong finiteness results for integrable systems of differential equations in several variables which did not have any analogue in one variable.
\end{abstract}


\maketitle

Consider the following linear differential equation $(E)$ with polynomial coefficients
$$
p_n\frac{d^n f}{ dz^n}+p_{n-1} \frac{d^{n-1} f}{ dz^{n-1}}+\dots + p_1\frac{d f}{dz}+p_0 f=0
$$
If $p_n(0)\neq 0$,  Cauchy theorem asserts that a holomorphic solution to $(E)$ defined on a small disc around $0$ is equivalent to the values of its $n$ first derivatives at $0$. If $p_n(0)=0$, holomorphic solutions to $(E)$ on a small discs  around $0$ may always be zero. Nonetheless, $(E)$ may have formal power series solutions. The \textit{Main asymptotic development theorem} \cite{SVDP}, 
due to Hukuhara and Turrittin asserts that for a direction $\theta$ emanating from 0, and for  a formal power series solution $f$, there is a  sector $\mathcal{S}_{\theta}$ containing $\theta$ such that $f$ can be "lifted" in a certain sense to a holomorphic solution $f_{\theta}$ of $(E)$ on $\mathcal{S}_{\theta}$. We  say that \textit{$f_{\theta}$ is asymptotic to $f$ at $0$}. If $f_{\theta}$ is analytically continued around 0 into  a solution $\tilde{f}_{\theta}$ of $(E)$ on the sector $\mathcal{S}_{\theta^{\prime}}$, where $\theta^{\prime}\neq \theta$, it may be that the asymptotic development of $\tilde{f}_{\theta}$ at $0$ is not $f$ any more. This is the \textit{Stokes phenomenon}. 
As a general principle, the study of  $(E)$ amounts to the study of its "formal type" and the study of how asymptotic developments of solutions jump via analytic continuation around $0$. To organize these informations, it is traditional to adopt a linear algebra point of view. 
\\ \indent
The equation $(E)$  can be seen as a \textit{differential module}, i.e. a finite dimensional vector space $\mathcal{N}$ over the field $\mathds{C}\{z\}[z^{-1}]$ of convergent Laurent series, endowed with a $\mathds{C}$-linear endomorphism $\nabla : \mathcal{N}\longrightarrow \mathcal{N}$ satisfying the Leibniz rule. 
In this language, solutions of $(E)$ correspond to elements of $\Ker \nabla$ (also called \textit{flat sections of $\nabla$}). 
Furthermore, a differential equation with same "formal type" as $(E)$ corresponds to a differential module $\mathcal{M}$ with an isomorphism of \textit{formal} differential modules $\iso : \mathcal{M}_{\hat{0}}\longrightarrow \mathcal{N}_{\hat{0}}$. Since $\iso$ can be seen as a formal flat section of the differential module $\Hom(\mathcal{M}, \mathcal{N})$, the main asymptotic development theorem applies to it. The lifts of $\iso$ to sectors thus produce a cocycle $\gamma:=(\iso_\theta\iso_{\theta^{\prime}}^{-1})_{\theta,\theta^{\prime}\in S^1}$ with value into the sheaf of sectorial automorphisms  of 
$\mathcal{N}$ which are asymptotic to $\Id$ at $0$. This is the \textit{Stokes sheaf} of $\mathcal{N}$, denoted by  $\St_{\mathcal{N}}(\mathds{C})$. A fundamental result of Malgrange \cite{MaSi} and Sibuya \cite{SI} implies that  $(\mathcal{M}, \iso)$ is determined by the torsor under $\St_{\mathcal{N}}(\mathds{C})$ associated to the cocycle $\gamma$. Hence, \textit{Stokes torsors encode in an algebraic way analytic data and classifying differential equations amounts to studying Stokes torsors}. As a result, the study of Stokes torsors is meaningful.
\\ \indent
In higher dimension, the role played by differential modules is played by \textit{good meromorphic connections}. We will take such a connection $\mathcal{N}$  defined around  $0\in \mathds{C}^n$ to be of the shape \begin{equation}\label{shape}
\mathcal{E}^{a_1} \otimes \mathcal{R}_{a_1}\oplus \dots \oplus \mathcal{E}^{a_d} \otimes \mathcal{R}_{a_d}
\end{equation}
where the $a_i$ are meromorphic functions with poles contained in a normal crossing divisor $D$, where $
\mathcal{E}^{a_i}$ stands for the rank one connection $(\mathcal{O}_{\mathds{C}^n, 0}(\ast D), d-da_i)$, and where the $\mathcal{R}_{a_i}$ are regular connections. 
Note that from works of Kedlaya \cite{Kedlaya1}\cite{Kedlaya2} and  Mochizuki \cite{Mochizuki2}\cite{Mochizuki1}, every meromorphic connection is (up to ramification) formally isomorphic at each point to a connection of the form \eqref{shape}  at the cost of blowing-up enough the pole locus. If $(r_i, \theta_i)_{i=1, \dots, n}$ are the usual polar coordinates on $\mathds{C}^n$, the Stokes sheaf  $\St_{\mathcal{N}}$ of $\mathcal{N}$  is a sheaf  of complex unipotent  algebraic groups over the torus  $\mathds{T}:=(S^1)^n$ defined by $r_1=\cdots = r_n=0$. 
 \\ \indent
By a \textit{good $\mathcal{N}$-marked connection}, we mean the data $(\mathcal{M}, \nabla, \iso)$ of a  good meromorphic connection $(\mathcal{M}, \nabla)$ around  $0$ endowed with an isomorphism of formal connections at the origin $\iso: \mathcal{M}_{\hat{0}}\longrightarrow \mathcal{N}_{\hat{0}}$. As in dimension 1, Mochizuki \cite{MochStokes}\cite{Mochizuki1} showed that good $\mathcal{N}$-marked connections are determined by their associated Stokes torsor, so we consider them as elements in $H^{1}(\mathds{T}, \St_{\mathcal{N}}(\mathds{C}))$.\\ \indent
Since $\St_{\mathcal{N}}$ is a sheaf of complex algebraic groups, its sheaf of $R$-points $\St_{\mathcal{N}}(R)$  is a well-defined sheaf of groups on $\mathds{T}$ for any commutative $\mathds{C}$-algebra $R$. Consequently, one can consider the functor of relative Stokes torsors $R\longrightarrow H^1(\mathds{T}, \St_{\mathcal{N}}(R))$, denoted by $H^1(\mathds{T}, \St_{\mathcal{N}})$. Following a strategy designed by Deligne, Babbitt and Varadarajan \cite{BV} proved that $H^1(S^1, \St_{\mathcal{N}})$ is representable by an affine space. Hence in dimension 1, the set of torsors under $\St_{\mathcal{N}}$ has a structure of a complex algebraic variety. \\ \indent
The interest of this result is to \textit{provide a framework in which questions related to differential equations can be treated with the apparatus of algebraic geometry}. 
This might  look  like a wish rather than a documented fact since the local theory of linear differential equations is fully understood in dimension $1$ by means of analysis. In dimension $\geq 2$ however, new phenomena appear and  this geometric perspective seems relevant. As we will show, the representability of $H^1(\mathds{T}, \St_{\mathcal{N}})$ in any dimension implies for differential equations quite strong finiteness results which have no counterparts in dimension 1 and which seem out of reach with former technology. Thus, we prove the following
\begin{theorem}\label{bigtheorem}
The functor $H^{1}(\mathds{T}, \St_{\mathcal{N}})$ is representable by an affine scheme of finite type over $\mathds{C}$.
\end{theorem}
Before explaining how the proof relates to Deligne's skeleton conjecture, let us describe two applications to finiteness results. \\ \indent
Suppose that $\mathcal{N}$ is \textit{very good}, that is, for functions $a_i$, $a_j$ appearing in \eqref{shape} with $a_i\neq a_j$, the difference $a_i- a_j$ has poles along all the components of the divisor $D$ along which $\mathcal{N}$ is localized. Let $V$ be a manifold containing $0$ and let us denote by  $\mathcal{N}_{V}$ the restriction of the connection $\mathcal{N}$ to $V$. We prove the following 
\begin{theorem}\label{restriction}
If $V$ is transverse to every irreducible component of $D$, there is only a finite number  of equivalence classes of good $\mathcal{N}$-marked connections with given restriction to $V$. Furthermore, this number depends only on $\mathcal{N}$ and on $V$.
\end{theorem}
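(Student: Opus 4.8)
The plan is to realize the restriction map as an algebraic morphism between the moduli schemes supplied by Theorem \ref{bigtheorem} and then argue that its fibers are finite. First I would invoke Theorem \ref{bigtheorem} to get the affine $\mathds{C}$-scheme of finite type $X_{\mathcal{N}}$ representing $H^1(\mathds{T}, \St_{\mathcal{N}})$, and likewise $X_{\mathcal{N}_V}$ representing $H^1(\mathds{T}_V, \St_{\mathcal{N}_V})$, where $\mathds{T}_V$ is the analogous torus attached to $V\cap D$. Restricting a good $\mathcal{N}$-marked connection $(\mathcal{M}, \nabla, \iso)$ to $V$ produces a good $\mathcal{N}_V$-marked connection, and because $V$ is transverse to every component of $D$ the combinatorics of the Stokes data is compatible with this operation; so restriction gives a morphism of functors $H^1(\mathds{T}, \St_{\mathcal{N}})\longrightarrow H^1(\mathds{T}_V, \St_{\mathcal{N}_V})$, hence a morphism of schemes $\res_V\colon X_{\mathcal{N}}\longrightarrow X_{\mathcal{N}_V}$ by Yoneda. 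The $\mathds{C}$-points of a fiber of $\res_V$ are exactly the equivalence classes of good $\mathcal{N}$-marked connections with the prescribed restriction to $V$, so the theorem amounts to showing that every fiber of $\res_V$ is a finite set and, moreover, that its cardinality is bounded independently of the chosen point of $X_{\mathcal{N}_V}$.

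For finiteness of the fibers, the key point is that $\res_V$ is injective \emph{as a map of sets on $\mathds{C}$-points} up to the action of an affine algebraic group — more precisely, two good $\mathcal{N}$-marked connections with the same restriction to a transverse $V$ differ by a Stokes cocycle which is trivial on $\mathds{T}_V$, and the very goodness hypothesis forces the corresponding subsheaf of $\St_{\mathcal{N}}$ to have only finitely many cohomology classes. Concretely I would analyze the sheaf map $\St_{\mathcal{N}}\to (\text{restriction to }\mathds{T}_V)_*\St_{\mathcal{N}_V}$: its "kernel" controls the fibers, and because $\mathcal{N}$ is very good every function $a_i-a_j$ has a pole along every component of $D$, so no Stokes line of $\mathcal{N}$ becomes invisible after restricting to the transverse slice $V$; the fiber of $\res_V$ over a class is then a torsor under $H^1$ of a unipotent sheaf all of whose stalks are trivialized on a dense open, which is a finite (indeed, in good cases a single) set. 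Equivalently, and perhaps more robustly, one can argue scheme-theoretically: $\res_V$ is a morphism of finite type $\mathds{C}$-schemes whose fibers are $0$-dimensional — the tangent space to a fiber is computed by a hypercohomology group which the transversality and very-goodness assumptions make vanish — so each fiber is finite by generic finiteness together with the $0$-dimensionality of \emph{every} fiber.

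The uniform bound is then a consequence of constructibility: the function sending a point $x\in X_{\mathcal{N}_V}$ to the cardinality of $\res_V^{-1}(x)$ is a constructible function on a quasi-compact scheme (the fibers of a finite-type morphism with finite fibers have uniformly bounded length over any Noetherian base, by upper semicontinuity of fiber dimension plus Noetherianity), hence it is bounded, and the bound manifestly depends only on the pair $(\mathcal{N}, V)$ through the scheme $X_{\mathcal{N}}$ and the morphism $\res_V$, which are themselves built only from $\mathcal{N}$ and $V$. I expect the main obstacle to be the second step: showing cleanly that the fibers of $\res_V$ are genuinely $0$-dimensional, i.e. that the transversality of $V$ together with the very-goodness of $\mathcal{N}$ really does kill the relative Stokes cohomology. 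This requires a careful bookkeeping of which Stokes hyperplanes on $\mathds{T}$ meet $\mathds{T}_V$ and a vanishing statement for the hypercohomology of the relative Stokes complex; the analogue of Deligne's skeleton conjecture proved in this paper, which pins down a torsor by its restriction to a well-chosen skeleton, should be exactly the input that makes this step work, with $V\cap D$ (suitably thickened) playing the role of the skeleton.
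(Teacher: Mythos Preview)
Your overall architecture matches the paper: realize $\res_V$ as a morphism of finite-type $\mathds{C}$-schemes and show its fibers are finite by showing it is unramified at every good $\mathcal{N}$-marked connection. However, the substance of the proof lies precisely in the step you flag as ``the main obstacle'', and your proposal does not supply it.

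Your first approach (``the fiber is a torsor under $H^1$ of a kernel sheaf'') is not correct as stated: $\St_{\mathcal{N}}$ is non-abelian, so fibers of maps on non-abelian $H^1$ are not torsors under cohomology of a kernel in any useful sense, and ``no Stokes line becomes invisible after restriction'' is not the relevant combinatorial fact (in dimension $>1$ the Stokes loci are hypersurfaces in $\mathds{T}$, and restricting to $\mathds{T}_V$ genuinely changes their topology).

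Your second approach is the paper's, but the paper does \emph{not} prove the tangent vanishing by skeleton/combinatorial methods. It first identifies $T_{\mathcal{T}}H^1(\mathds{T},\St_{\mathcal{N}})\simeq (\mathcal{H}^1\Irr^{\ast}_D\End\mathcal{M})_0$ (Lemma \ref{tspace}), and then, via the local cohomology triangle for $i_V$, identifies $\Ker T_{(\mathcal{M},\nabla,\iso)}\res_V$ with $(\mathcal{H}^1\Irr^{\ast}_V\End\mathcal{M})_0$. The vanishing of this group is a genuinely analytic/$\mathcal{D}$-module statement: one blows up the origin, uses compatibility of $\Irr^{\ast}$ with proper push-forward to reduce to a global section of $\mathcal{H}^1\Irr^{\ast}$ on the exceptional divisor, shows it vanishes near the strict transform of $V$ by a non-characteristic/Cauchy--Kowalevska argument, and then invokes Mebkhout's theorem that $\Irr^{\ast}[1]$ is perverse to propagate vanishing from a dense open. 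None of this is accessible from the Stokes-hyperplane combinatorics or from the skeleton description; the very-goodness hypothesis enters only to guarantee the identification $\St_{\mathcal{N}}(R)=\Id+R\otimes\mathcal{H}^0\DR^{<D}\End\mathcal{N}$ underlying Lemma \ref{tspace}.

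So the gap is concrete: you need the transcendental identification of the tangent space with the irregularity complex, and then Mebkhout's perversity theorem (plus a blow-up argument) to get the vanishing. The skeleton input from Theorem \ref{equcat} is used to build the moduli scheme, not to control the fibers of $\res_V$.
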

This theorem looks like a weak differential version of Lefschetz's theorem. A differential Lefschetz theorem would assert that for a generic choice of $V$,  good $\mathcal{N}$-marked connections are determined by their restriction to $V$.  It is a hope of the author that such a question is approachable by geometric means using 
the morphism of schemes 
\begin{equation}\label{res}
\res_V: H^{1}(\mathds{T}, \St_{\mathcal{N}})\longrightarrow H^{1}(\mathds{T}^{\prime}, \St_{\mathcal{N}_{V}})
\end{equation}
induced by the restriction to $V$.    \\ \indent
To give flesh to this intuition, let us indicate how geometry enters the proof of Theorem \ref{restriction}. Since unramified morphisms of finite type have finite fibers, it is enough to show that  $\mathcal{N}$-marked connections lie
in the unramified locus of $\res_V$, which is the locus where
 the tangent map of $\res_V$ is injective. We show in \ref{tanandirr}  a canonical identification
\begin{equation}\label{relation}
T_{(\mathcal{M}, \nabla, \iso)} H^1(\mathds{T}, \St_\mathcal{N})\simeq \mathcal{H}^1 (\Sol \End\mathcal{M})_0
\end{equation}
where the left-hand side denotes the tangent space of $H^1(\mathds{T}, \St_\mathcal{N})$ at $(\mathcal{M}, \nabla, \iso)$ and where $\mathcal{H}^1 \Sol$ denotes the first cohomology sheaf of the solution complex of a $\mathcal{D}$-module. Note that the left-hand side of \eqref{relation} is \textit{algebraic}, whereas the right-hand side is \textit{transcendental}. From that we deduce a similar interpretation to the kernel $\Ker T_{(\mathcal{M}, \nabla, \iso)} \res_V$ and prove its vanishing using a perversity theorem due to Mebkhout \cite{Mehbgro}. \\ \indent
Using an invariance theorem due to Sabbah \cite{SabRemar}, we further prove  the following rigidity result:
\begin{theorem}\label{rigidity}
Suppose that $D$ has at least two components and that $\mathcal{N}$ is very general. Then there is only a finite number of equivalence classes of good $\mathcal{N}$-marked connections.
\end{theorem}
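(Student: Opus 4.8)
The plan is to show that the affine scheme $M := H^1(\mathds{T}, \St_{\mathcal{N}})$ furnished by Theorem \ref{bigtheorem} is zero-dimensional. Since a scheme of finite type over $\mathds{C}$ of dimension zero is a finite $\mathds{C}$-scheme, it then has only finitely many $\mathds{C}$-points, and by Mochizuki's theorem this set of $\mathds{C}$-points is exactly the set of equivalence classes of good $\mathcal{N}$-marked connections. Because the Krull dimension of the local ring of $M$ at a closed point is bounded above by the dimension of its Zariski tangent space, it suffices to prove that $T_{(\mathcal{M}, \nabla, \iso)}M = 0$ for every good $\mathcal{N}$-marked connection $(\mathcal{M}, \nabla, \iso)$; and since over $\mathds{C}$ a very general subset of $M$ meets each irreducible component, it would even be enough to treat a very general such connection.

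By the canonical identification \eqref{relation} obtained in \ref{tanandirr}, we have $T_{(\mathcal{M}, \nabla, \iso)}M \simeq \mathcal{H}^1(\Sol \End \mathcal{M})_0$. The problem is thereby reduced to the following transcendental vanishing statement: if $D$ has at least two components and $\mathcal{N}$ is very general, then $\mathcal{H}^1(\Sol \End \mathcal{M})_0 = 0$ for every good meromorphic connection $\mathcal{M}$ with $\mathcal{M}_{\hat{0}} \simeq \mathcal{N}_{\hat{0}}$.

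To establish this vanishing I would combine the perversity of solution complexes (as in \cite{Mehbgro}, already used for Theorem \ref{restriction}) with Sabbah's invariance theorem \cite{SabRemar}. Away from $D$ the connection $\End \mathcal{M}$ is smooth, so $\Sol \End \mathcal{M}$ is a local system concentrated in a single degree there; hence $\mathcal{H}^1(\Sol \End \mathcal{M})$ is supported on $D$. Sabbah's invariance theorem controls how the Stokes-filtered local system of $\End \mathcal{M}$ — and thus $\Sol \End \mathcal{M}$ — varies along the strata of $D$, showing it to be locally constant transversally to the smooth locus $D_i^{\circ}$ of each component; together with the support and cosupport constraints of the perverse $t$-structure this pins down $\mathcal{H}^1(\Sol \End \mathcal{M})$ generically along each $D_i$. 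The hypothesis that $D$ has at least two components makes the irregularity of $\End \mathcal{M}$ ramify along two independent directions, which is what prevents the one-variable flexibility from persisting; the ``very general'' hypothesis on $\mathcal{N}$ is then used to kill the contributions of the deeper strata $D_i \cap D_j$ with $i \neq j$. The outcome is that $\mathcal{H}^1(\Sol \End \mathcal{M})$ vanishes everywhere on $D$, in particular at $0$, and feeding this back into the first paragraph proves the theorem.

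I expect this last vanishing to be the main obstacle. It cannot be obtained from the restriction morphism $\res_V$ of \eqref{res} alone: for $V$ transverse to every component of $D$, the proof of Theorem \ref{restriction} shows that $T \res_V$ is injective, but $H^1(\mathds{T}', \St_{\mathcal{N}_V})$ is not known a priori to be finite, so injectivity does not suffice. The delicate point is to make the genericity hypothesis on $\mathcal{N}$ precise enough that Sabbah's invariance theorem, the perverse $t$-structure, and the presence of at least two components of $D$ together force $\mathcal{H}^1(\Sol \End \mathcal{M})_0 = 0$ — an argument that must genuinely use $\dim \geq 2$, since in one variable $H^1(S^1, \St_{\mathcal{N}})$ is a nontrivial affine space by \cite{BV} and no analogous rigidity holds.
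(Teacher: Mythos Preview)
Your overall architecture is the same as the paper's: use Theorem~\ref{bigtheorem} to know $H^{1}(\mathds{T},\St_{\mathcal{N}})$ is of finite type, then kill every Zariski tangent space at a marked connection via the identification of \ref{tanandirr}. Where your proposal parts from the paper, and where the genuine gap lies, is in the vanishing of $(\mathcal{H}^{1}\Irr^{\ast}_{D}\End\mathcal{M})_{0}$.

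You invoke Sabbah's invariance theorem as a statement about local constancy of the Stokes-filtered local system along strata of $D$, and then hope that perverse support/cosupport constraints plus the existence of two components force the vanishing. That is not what Sabbah's theorem \cite{SabRemar} says in this context, and the perversity argument you sketch does not close. The role of Sabbah's theorem in the paper is much sharper: it asserts that the number $\dim(\mathcal{H}^{1}\Irr^{\ast}_{D}\End\mathcal{M})_{0}$ depends only on the \emph{formal} type at $0$, hence equals $\dim(\mathcal{H}^{1}\Irr^{\ast}_{D}\End\mathcal{N})_{0}$. This single move replaces the unknown $\mathcal{M}$ by the explicit split model $\mathcal{N}=\bigoplus_{a}\mathcal{E}^{a}\otimes\mathcal{R}_{a}$, and without it you are stuck trying to control $\Sol\End\mathcal{M}$ for a connection about which you know nothing beyond its formalization.

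Once reduced to $\mathcal{N}$, the paper's argument is a direct computation that your proposal does not anticipate. The ``very general'' hypothesis is used first to make each $\mathcal{R}_{a}$ have semisimple monodromy, so that $\End\mathcal{N}$ splits as a direct sum of rank-one connections of the form $z^{\alpha}\mathcal{E}^{a}$ with $a=1/z_{1}^{a_{1}}\cdots z_{m}^{a_{m}}$. One is then left with proving $(\Irr^{\ast}_{D}\,z^{\alpha}\mathcal{E}^{a})_{0}\simeq 0$ for generic $\alpha$. This is Lemma~\ref{quasidernierlemme}: via $(\Irr^{\ast}_{D}\,z^{\alpha}\mathcal{E}^{a})_{0}\simeq R\Gamma(\mathds{T},\mathcal{H}^{0}\DR^{<D}z^{\alpha}\mathcal{E}^{a})$ and a Leray spectral sequence for the map $\rho:(\theta_{1},\dots,\theta_{m})\mapsto\sum a_{i}\theta_{i}$, the computation reduces to the cohomology of a rank-one local system on an $(m-1)$-torus whose monodromy along one circle is $e^{2\pi i(\alpha_{i}a_{i}+\alpha_{j}a_{j})}$. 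The hypothesis $m\geq 2$ guarantees there is such a circle, and the genericity of the exponents $\alpha$ (this is the second, and decisive, use of ``very general'') makes $\alpha_{i}a_{i}+\alpha_{j}a_{j}\notin\mathds{Z}$, so the cohomology vanishes. Your stratification picture, with the deep strata $D_{i}\cap D_{j}$ carrying the obstruction to be killed by genericity, does not match this mechanism: the two-component hypothesis enters not through stratum depth but through the existence of an extra circle direction on $\mathds{T}$ along which one can arrange nontrivial monodromy.
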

In this statement, very general means that $\mathcal{N}$ is very good and that the monodromy of each regular constituent contributing to  $\mathcal{N}$ in \eqref{shape}  has eigenvalues away from a denombrable union of strict Zariski closed   subsets of an affine space. \\ \indent
Let us finally explain roughly the proof of Theorem  \ref{bigtheorem}. The main idea is to import and prove a conjecture from the field of Galois representations.  Let $X$ be a smooth variety over a finite field of characteristic $p>0$, and let $\ell\neq p$  be a prime number. To any $\ell$-adic local system $\mathcal{F}$
on $X$ up to semi-simplification, one can associate its 
\textit{skeleton} $\sk \mathcal{F}$, that is the collection of restrictions of $\mathcal{F}$ to curves 
drawn on $X$ endowed with compatibilities at intersection points. It has bounded ramification 
at infinity in an appropriate sense, see \cite{HenKerz}. As a consequence 
of Cebotarev's density theorem, the assignment $\mathcal{F}\longrightarrow \sk 
\mathcal{F}$ is injective and Deligne conjectured that any skeleton with 
bounded ramification comes from an $\ell$-adic local system. Building on the work of 
Wiesend \cite{Wiesend}, this has been proved in the tame case by Drinfeld 
\cite{DriDel} and for arbitrary ramification in the rank one case by Kerz and Saito 
\cite{KerzSaito}. \\ \indent
Back to characteristic $0$, let $\mathscr{C}$ be the family of smooth curves $i: C \hookrightarrow \mathds{C}^n$ containing $0$. For $C\in \mathscr{C}$, passing to polar coordinates induces    a map $(r, \theta)\longrightarrow i(r, \theta)$. Restricting to $r=0$ thus produces an embedding in $\mathds{T}$ of the circle $S^1_C$ of directions  inside $C$  emanating from $0$. 
We define  for every $\mathds{C}$-algebra $R$ a  \textit{Stokes skeleton relative to $R$} as a  collection of Stokes torsors $(\mathcal{T}_C \in  H^{1}(S^1_C, \St_{\mathcal{N}_C}(R)))_{C\in \mathscr{C}}$
endowed with compatibilities at points of $\mathds{T}$ where two circles $S^1_{C_1}$ and $S^1_{C_2}$ intersect, where $C_1, C_2\in \mathscr{C}$.
We observe that a naive version of Deligne's conjecture stating that every Stokes skeleton comes from a Stokes torsor is false, and  introduce a 
 combinatorial condition \ref{admissible} satisfied by skeletons of  Stokes torsors that we call \textit{admissibility}. We finally prove the following
\begin{theorem}\label{equcat}
Restriction induces a bijection between $H^{1}(\mathds{T}, \St_{\mathcal{N}}(R))$ and the set $\St_{\mathcal{N}}(R)\text{-}\Sk_{\mathscr{C}}$ of  $\St_{\mathcal{N}}(R)$-admissible skeletons relative to $R$.
\end{theorem}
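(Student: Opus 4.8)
The plan is to prove Theorem \ref{equcat} by constructing a restriction map and an inverse (gluing) map, and checking they are mutually inverse. The restriction map is the easy direction: given a Stokes torsor $\mathcal{T}\in H^1(\mathds{T},\St_{\mathcal{N}}(R))$, one restricts it to each circle $S^1_C$ for $C\in\mathscr{C}$, which makes sense because the pullback of $\St_{\mathcal{N}}$ along $S^1_C\hookrightarrow\mathds{T}$ is canonically $\St_{\mathcal{N}_C}$ (this is where the compatibility of the good formal structure with restriction to curves is used). The resulting family $(\mathcal{T}_C)_{C\in\mathscr{C}}$ is automatically compatible at intersection points because all the $\mathcal{T}_C$ come from a single object on $\mathds{T}$, and one must check — this is the content that makes the combinatorial condition \ref{admissible} the \emph{right} one — that this family is $\St_{\mathcal{N}}(R)$-admissible. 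So the first step is: verify that restriction lands in $\St_{\mathcal{N}}(R)\text{-}\Sk_{\mathscr{C}}$.

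The substantive direction is surjectivity together with injectivity, i.e. building the inverse. Here I would first establish injectivity: if two torsors $\mathcal{T},\mathcal{T}'$ on $\mathds{T}$ have the same skeleton, they are isomorphic. The point is that $\mathds{T}=(S^1)^n$ is covered, in a cofinal way for the topology relevant to $\St_{\mathcal{N}}$, by (neighborhoods of) the circles $S^1_C$: every point of $\mathds{T}$ lies on some $S^1_C$, and a $\St_{\mathcal{N}}$-torsor is, locally on $\mathds{T}$, detected by its restriction to a single direction because $\St_{\mathcal{N}}$ is a sheaf of unipotent algebraic groups whose local structure is governed by the one-dimensional (sectorial) picture along each coordinate. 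More precisely I would use that an isomorphism of torsors is itself a section of a sheaf ($\St_{\mathcal{N}}$ acting on $\Isom(\mathcal{T},\mathcal{T}')$), and a section of such a sheaf on $\mathds{T}$ is determined by, and can be glued from, its restrictions to the circles $S^1_C$ once the compatibilities at intersections are imposed; this is essentially a Mayer–Vietoris/descent argument along the family $\mathscr{C}$, using that $\St_{\mathcal{N}}$ has no higher obstruction locally (unipotence gives a filtration by sheaves of $R$-modules, reducing everything to sheaf cohomology of abelian sheaves on $\mathds{T}$, which can be computed via the circles).

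For surjectivity, given an admissible skeleton $(\mathcal{T}_C)_{C}$, I would reconstruct a torsor on $\mathds{T}$ by descent/gluing. Cover $\mathds{T}$ by the open neighborhoods of the $S^1_C$; on each such piece the datum $\mathcal{T}_C$ (extended to a neighborhood, using that $\St_{\mathcal{N}_C}$ computes $\St_{\mathcal{N}}$ near $S^1_C$) gives a local torsor, and the admissibility condition \ref{admissible} is precisely what guarantees that on overlaps these local torsors agree up to a coherent system of isomorphisms — i.e. it supplies the descent datum. One then checks the cocycle condition for triple overlaps, which again reduces, via the unipotent filtration, to an abelian statement that admissibility is designed to satisfy. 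Gluing produces $\mathcal{T}\in H^1(\mathds{T},\St_{\mathcal{N}}(R))$ whose skeleton is the given one by construction, and injectivity (already proved) shows the two maps are mutually inverse; functoriality in $R$ is then immediate since everything was natural. The main obstacle is the surjectivity step: one has to show that the \emph{combinatorial} admissibility at the finitely many relevant intersection configurations really is strong enough to produce a \emph{global} descent datum, i.e. that there are no further higher cocycle obstructions beyond those encoded in \ref{admissible}; handling this will require carefully organizing the cover of $\mathds{T}$ by the circles $S^1_C$ so that all overlaps are controlled by the pairwise intersection data, and leveraging unipotence to kill the potential $H^{\geq 2}$ contributions.
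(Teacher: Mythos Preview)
Your overall structure (restriction map, then injectivity, then surjectivity by gluing) is reasonable in spirit, but the surjectivity step has a genuine gap that the paper handles quite differently.

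First, you seem to conflate the compatibilities at intersection points (which are part of the \emph{definition} of a skeleton, Definition~\ref{defskeleton}) with the admissibility condition \ref{admissible}. Admissibility is not a pairwise gluing condition on overlaps of circles: it is a polygon-closing condition inside $\mathcal{I}$-good open sets. A $\mathscr{C}$-polygon has edges lying on curves of $\mathscr{C}$, and admissibility says that if you have compatible sections along all but one edge, the missing edge can be filled in. This is a genuinely higher-dimensional constraint that does not reduce to pairwise intersection data, so your plan to ``organize the cover so that all overlaps are controlled by the pairwise intersection data'' cannot succeed as stated.

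Second, your surjectivity argument proposes to extend each $\mathcal{T}_C$ from the circle $S^1_C$ to an open neighborhood in $\mathds{T}$ and then glue. But there is no canonical way to extend a $\St_{\mathcal{N}_C}(R)$-torsor on a circle to a $\St_{\mathcal{N}}(R)$-torsor on a higher-dimensional open set; the sheaf $\St_{\mathcal{N}}$ genuinely varies in the transverse directions (the order $<_{\mathcal{S}}$ depends on all coordinates of $\mathds{T}$). The paper avoids this entirely: it uses the explicit \emph{coskeleton} functor $\cosk_{\mathscr{C}}$ (right adjoint to $\sk_{\mathscr{C}}$, Lemma~\ref{adjoint}), which builds a sheaf on $\mathds{T}$ stalkwise from the skeleton. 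One already knows $\cosk_{\mathscr{C}}$ of a skeleton is a pseudo-torsor (\S\ref{torsorandske}); the substantive work is showing it has local sections. This is done by an explicit transport argument: pick a germ at $x$, propagate it along segments in $\mathscr{C}$ to $\mathcal{S}(x,\epsilon)^{\pm}$, then use admissibility on triangles and parallelograms to extend coherently to a full neighborhood of $x$. The polygon condition is exactly what makes these successive propagations path-independent.

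Finally, your appeal to unipotence and abelian cohomology to kill $H^{\geq 2}$ obstructions is not needed and would not obviously work for the gluing you describe; the paper's argument is purely combinatorial/topological, relying on Lemma~\ref{crittri} (triviality on $\mathcal{I}$-good opens) and convexity, not on any filtration.
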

The Stokes skeleton functor $R\longrightarrow \St_{\mathcal{N}}(R)\text{-}\Sk_{\mathscr{C}}$ is easily seen to be representable by an affine scheme by Babbitt-Varadarajan's theorem. Hence, to prove Theorem \ref{bigtheorem}, the whole point is to show that the same is true when admissibility is imposed. To do this, we  first use a general theorem \cite[6.3]{ArtinOnStack}, see also \cite[10.4]{LMB} and \cite[04S6]{SP} to obtain the representability of $H^{1}(\mathds{T}, \St_{\mathcal{N}})$ by an algebraic space of finite type over $\mathds{C}$. Finally, for a
finite family $\mathscr{C}_f\subset \mathscr{C}$ of carefully chosen curves, we show that the formation of the $\mathscr{C}_f$-skeleton
$$
H^{1}(\mathds{T}, \St_{\mathcal{N}})\longrightarrow \St_{\mathcal{N}}\text{-}\Sk_{\mathscr{C}_f}
$$
is a closed immersion.

\subsection*{Acknowledgement}

This paper comes from the observation that a particular case of a conjecture introduced in \cite{phdteyssier} reduces to the equality between two numbers that were likely to be the dimensions at two points of the tangent spaces of a moduli problem involving marked connections. This equality was obtained and generalized by C. Sabbah in \cite{SabRemar} via analytical means. I thank him for his interest and for useful remarks. \\ \indent
Representability by an algebraic space was first derived using  a general representability theorem from Artin \cite[5.4]{ArtinOnStack}. I thank M. Brion et P. Eyssidieux for pointing out that the more specific \cite[6.3]{ArtinOnStack} is enough, thus improving the readability of this paper. I also thank A. Pulita for useful comments on the introduction.\\ \indent
I thank the Mathematics department of the Hebrew University of Jerusalem and the Katholieke Universiteit of Leuven for outstanding working conditions. This work has been achieved with the support of Freie Universität/Hebrew University of Jerusalem joint post-doctoral program, ERC \text{Harmonic Analysis and $\ell$-adic sheaves}, and the long term structural funding-Methusalem grant of the Flemish Government.

\section{Generalities on Stokes torsors}

\subsection{}\label{notation}
Let $D$ be a germ of normal crossing divisor at $0\in \mathds{C}^n$ and let $i: D\longrightarrow \mathds{C}^n$ be the canonical inclusion. Let $D_1, \dots, D_m$ be the irreducible components of $D$. Let $\mathcal{I}$ be a good set of irregular values with poles contained in $D$ in the sense of \cite{Mochizuki1}. For every $a\in \mathcal{I}$, set 
$$
\mathcal{E}^{a}=(\mathcal{O}_{\mathds{C}^n, 0}(\ast D), d-da)
$$
We fix once for all a germ of unramified good meromorphic flat bundle of rank $r$ of the form
\begin{equation}\label{formalmodel}
(\mathcal{N}, \nabla_{\mathcal{N}}):=\bigoplus_{a\in \mathcal{I}}\mathcal{E}^{a} \otimes \mathcal{R}_a
\end{equation}
where $\mathcal{R}_a$ is a germ of regular meromorphic connection with poles along $D$. We also fix a basis $\mathbf{e}_a$ of $\mathcal{R}_a$.
We denote by $E_a$ the $\mathds{C}$-vector space generated by $\mathbf{e}_{a}$, $E:= \oplus_{a\in \mathcal{I}} E_a$, $i_a : E_a\longrightarrow E$ the canonical inclusion and $p_a : E \longrightarrow E_a$ the canonical projection.\\ \indent
Note that in dimension $>1$, an arbitrary meromorphic connection may not have a formal model of the shape \eqref{formalmodel}, but we know from works of Kedlaya \cite{Kedlaya1}\cite{Kedlaya2} and  Mochizuki \cite{Mochizuki2}\cite{Mochizuki1} that such a model exists (up to ramification) at each point after enough suitable blow-ups.


\subsection{Recollection on asymptotic analysis} \label{rappelasan}
As references for asymptotic ana\-lysis and good connections, let us mention \cite{Maj}, \cite{Stokes} and  \cite{Mochizuki1}. 
For $i=1, \dots, m$, let $\tilde{X}_i\longrightarrow \mathds{C}^n$ be the real blow-up of $\mathds{C}^n$ along $D_i$ and let $p:\tilde{X}\longrightarrow \mathds{C}^n$ be the fiber product of the $\tilde{X}_i$. We have  $\tilde{X}\simeq ([0, +\infty [ \times S^{1})^{m}\times \mathds{C}^{n-m}$ and $p$ reads
$$
((r_k,\theta_k )_{k},y)\longrightarrow ((r_k e^{i\theta_k} )_{k},y)
$$
In particular, we have an open immersion $j_{D}: \mathds{C}^n\setminus D\longrightarrow \tilde{X}$ and $\mathds{T}:=p^{-1}(0)$ is isomorphic to $(S^1)^m$.\\ \indent
Let $\mathcal{A}$ be the sheaf on $\mathds{T}$ of holomorphic functions on $\mathds{C}^n\setminus D$ admitting an asymptotic development along $D$. Let $\mathcal{A}^{<D}$ be the subsheaf of $\mathcal{A}$ of functions asymptotic to $0$ along $D$. These sheaves are endowed with a structure of $p^{-1}\mathcal{D}_{\mathds{C}^n,0}$-module and there is a canonical \textit{asymptotic development} morphism
$$
\xymatrix{
\AS_0:  \mathcal{A} \ar[r] & \mathds{C}\llbracket z_1, \dots, z_n\rrbracket 
}
$$
where  $\mathds{C}\llbracket z_1, \dots z_n\rrbracket$ has to be thought of as the constant sheaf on $\mathds{T}$. \\ \indent
For a germ $(\mathcal{M}, \nabla)$ of flat meromorphic connection at $0$, the module $p^{-1}\mathcal{M}$ makes sense in a neighbourhood of $\mathds{T}$ in $\tilde{X}$. Thus, 
$$
\tilde{\mathcal{M}}:=\mathcal{A}\otimes_{\mathcal{O}_{\mathds{C}^n, 0}}  p^{-1}\mathcal{M}
$$
is a $p^{-1}\mathcal{D}_{\mathds{C}, 0}$-module  on $\mathds{T}$.  Let $\DR \tilde{\mathcal{M}}$ be the De Rham complex of $ \tilde{\mathcal{M}}$ and let  $\DR^{<D} \mathcal{M}$ be the De Rham complex of $ \tilde{\mathcal{M}}$ with coefficients in $\mathcal{A}^{<D}$. \\ \indent
For $\xi \in \mathcal{H}^{0}\DR \mathcal{M}_{\hat{0}}$, we denote by $\mathcal{H}^{0}_{\xi}\DR  \tilde{\mathcal{M}}$ the subsheaf of $\mathcal{H}^{0}\DR  \tilde{\mathcal{M}}$ of sections $\tilde{\xi}$ which are asymptotic to $\xi$, that is 
$$
(\AS_0\otimes \id_{ \mathcal{M}})(\tilde{\xi})=\xi
$$ 
We set $$\Isom_{\iso}( \mathcal{M},  \mathcal{N}):=\mathcal{H}^{0}_{\iso}\DR \MCHom( \tilde{\mathcal{M}}, \tilde{\mathcal{N}})
$$
Mochizuki's asymptotic development theorem \cite{Mochizuki1}\footnote{see also \cite{HienInv} for an account of the proof.} implies that the sheaf $\Isom_{\iso}(\mathcal{M}, \mathcal{N})$ is a torsor under $\St_{\mathcal{N}}(\mathds{C})$ defined below.

\subsection{Stokes hyperplane}
For $a, b \in \mathcal{I}$, the function
$$
F_{a,b}:=  \Real(a-b)|z^{-\ord(a-b)}|
$$
induces a $C^{\infty}$ function on $\partial \tilde{X}$. We denote by $\mathcal{H}_{a,b}$ its vanishing locus on $\mathds{T}$. The 
$\mathcal{H}_{a,b}$ are the \textit{Stokes hyperplanes of $\mathcal{I}$}. Concretely, $$a-b=f_{ab}z^{\ord(a-b)}$$ with $f_{ab}(0)\neq 0$ and $\ord(a-b)=-(\alpha_{ab}(1), \dots, \alpha_{ab}(m))$, where $\alpha_{ab}(k)\geq 0$ for $k=1, \dots, m$. Hence,  $F_{a,b}$ induces
\begin{equation}\label{Fabinduit}
((r_k, \theta_k)_{1\leq k \leq m}, (z_k)_{m+1\leq k \leq n})\longrightarrow \Real f_{ab}(r_k e^{i \theta_k}, z_k) e^{-i\sum_{k=1}^{m}\alpha_{ab}(k) \theta_k}
\end{equation}
Set $f_{ab}(0)=r_{ab}e^{i\theta_{ab}}$ with $0\leq \theta_{ab}<2\pi$. The restriction of \eqref{Fabinduit} to $\mathds{T}$ is 
\begin{equation}\label{FabinduitsurT} 
(\theta_1, \dots, \theta_m)\longrightarrow \cos(\theta_{ab}-\sum_{k=1}^{m}\alpha_{ab}(k) \theta_k)
\end{equation} 
From now on, we see $\mathds{T}$ as $\mathds{R}^m/2\pi \mathds{Z}^m$ and we denote by $\pi : \mathds{R}^m \longrightarrow \mathds{T}$ the canonical projection.
For $l\in \mathds{Z}$, let $\mathcal{H}_{a,b}(l)$ be the hyperplane of $\mathds{R}^{m}$ given by  
$$\sum_{k=1}^{m}\alpha_{ab}(k) \theta_k= \theta_{ab}+\frac{\pi}{2}+ l\pi
$$
and define
$$
Z_{ab}=\displaystyle{\bigsqcup_{l\in \mathds{Z}} \mathcal{H}_{a,b}(l)}
$$
Then $\mathcal{H}_{a,b}=\pi(Z_{ab})$.\\ \indent
If $\mathcal{S}$ is a product of strict intervals, $\mathcal{S}$ is homeomorphic via $\mathds{R}^{m}\longrightarrow \mathds{T}$ to an open $U(\mathcal{S})\subset \mathds{R}^m$. Since $Z_{ab}=Z_{ab}+ 2\pi \mathds{Z}^m$ we have
$$
\mathcal{H}_{a,b}\cap \mathcal{S} =  \pi(Z_{ab})\cap \mathcal{S}=\pi(Z_{ab}\cap U(\mathcal{S}))\simeq Z_{ab}\cap U(\mathcal{S})=\displaystyle{\bigsqcup_{l\in \mathds{Z}} \mathcal{H}_{a,b}(l)\cap U(\mathcal{S})}
$$
Hence, the  connected components of $ \mathcal{H}_{a,b}\cap \mathcal{S}$ correspond via $U(\mathcal{S})\simeq \mathcal{S}$ to the non empty intersections between $U(\mathcal{S})$ and the $\mathcal{H}_{a,b}(l)$.

\subsection{The Stokes sheaf. Definition and local structure}\label{localstructure}
For $a, b \in \mathcal{I}$ and $\mathcal{S}$ any subset of $\mathds{T}$, we define following \cite{Mochizuki1} a partial order $<_\mathcal{S}$ on $\mathcal{I}$ as follows
\begin{equation}\label{order}
a<_\mathcal{S} b \text{ if and only if } F_{a,b}(x)<0 \text{ for all } x\in \mathcal{S}
\end{equation}
We use notations from \ref{notation}. For every $R\in \mathds{C}$-alg, we define $\St_{\mathcal{N}}(R)$ as the subsheaf of   $R\otimes_{\mathds{C}} (j_{D\ast}\mathcal{H}^{0}\DR\End \mathcal{N})_{|\mathds{T}}$ of sections of the form $\Id+f$ where $p_ a f i_b = 0$ unless  $a<_\mathcal{S} b$.  \\ \indent
Suppose that $\mathcal{S}$ is contained in a product of strict open intervals. For $a\in \mathcal{I}$, the regular connection $\mathcal{R}_a$ admits in the basis $\mathbf{e}_a$ a fundamental matrix of flat sections $F_{a}$ on $\mathcal{S} $. Set $F:=\oplus_{a\in \mathcal{I}}F_a$ and $D:= \oplus_{a\in \mathcal{I}} a \Id$. For every $h\in (j_{D\ast}\mathcal{H}^{0}\DR\End \mathcal{N} )_{|\mathcal{S}}$, a standard computation shows that the derivatives of 
$e^{-D} F^{-1} h  F e^{D}$ are $0$. We thus have a well-defined isomorphism
\begin{equation}\label{inj}
\xymatrix{
(j_{D\ast}\mathcal{H}^{0}\DR\End \mathcal{N} )_{|\mathcal{S}}\ar[r]^-{\sim} & \underline{\End E}
}
\end{equation}
For every $\mathds{C}$-vector space $I$, we obtain  a  commutative diagram
\begin{equation}\label{tri}
\xymatrix{
\Gamma(\mathcal{S}, I\otimes_{\mathds{C}}j_{D\ast}\mathcal{H}^{0}\DR\End \mathcal{N})\ar[r]^-{\sim} & I\otimes_{\mathds{C}}\End E \\
I\otimes_{\mathds{C}}\Gamma(\mathcal{S}, j_{D\ast}\mathcal{H}^{0}\DR\End \mathcal{N})     \ar[ru]_-{\sim} \ar[u]
 }
\end{equation}
Hence, the canonical morphism 
\begin{equation}\label{sortleR}
I\otimes_{\mathds{C}}\Gamma(\mathcal{S}, j_{D\ast}\mathcal{H}^{0}\DR\End \mathcal{N})   \longrightarrow\Gamma(\mathcal{S}, I\otimes_{\mathds{C}}j_{D\ast}\mathcal{H}^{0}\DR\End \mathcal{N})
\end{equation}
is an isomorphism. Applying this to $I=R$, we see that \eqref{tri} identifies $\Gamma(\mathcal{S},\St_{\mathcal{N}}(R))$ with the space  of  $h\in  R\otimes_{\mathds{C}}\End E$ such that $p_ a (h-\Id) i_b = 0$ unless $a<_\mathcal{S} b$.  In particular, $\St_{\mathcal{N}}(R)$ is a sheaf of unipotent algebraic groups over $R$. \\ \indent 
If $\mathcal{S}^{\prime}\subset \mathcal{S}$ are as above and if $R\longrightarrow S$ is a morphism of rings, the following diagram 
$$
\xymatrix{
\Gamma(\mathcal{S} , \St_{\mathcal{N}}(R)) \ar[rr] \ar[ddd] \ar[dr]   & &  \Gamma(\mathcal{S}^{\prime}, \St_{\mathcal{N}}(R)) \ar[ddd] \ar[dl]   \\
 &    R \otimes_{\mathds{C}}   \End E    \ar[d]           &      \\
 &    S \otimes_{\mathds{C}}   \End E         & \\
 \Gamma(\mathcal{S} , \St_{\mathcal{N}}(S)) \ar[rr] \ar[ur]   & & \Gamma(\mathcal{S}^{\prime}, \St_{\mathcal{N}}(S)) \ar[ul]   
}
$$
commutes. Hence, horizontal arrows are injective. If $R\longrightarrow S$  is injective, vertical arrows are injective and we have further 
\begin{equation}\label{uneegalite}
\Gamma(\mathcal{S} , \St_{\mathcal{N}}(R))  =\Gamma(\mathcal{S}^{\prime}, \St_{\mathcal{N}}(R))  \cap  \Gamma(\mathcal{S} , \St_{\mathcal{N}}(S))  
\end{equation}
\subsection{Restriction to curves}
Let $\iota : C\longrightarrow \mathds{C}^n$ be a germ of smooth curve passing through $0$ and non included in $D$. Let $\tilde{\iota}:\tilde{C}\longrightarrow \tilde{X}$ be the induced morphism at the level of the real blow-ups and. We still denote  by $\tilde{\iota}:\partial\tilde{C}\longrightarrow \mathds{T}$ the induced morphism at the level of the boundaries. Note that $\tilde{\iota}$ is injective. We set $\iota^{\ast}\mathcal{I}:=\{a\circ \iota, a\in \mathcal{I}\}$. By goodness property of $\mathcal{I}$, restriction to $C$ induces a preserving order bijection $\mathcal{I}\longrightarrow \iota^{\ast}\mathcal{I}$, that is for every $x\in \partial\tilde{C}$ and $a, b\in \mathcal{I}$, we have $a<_{\tilde{\iota}(x)} b$ if and only if $a\circ \iota <_{x} b \circ \iota$. Hence, the canonical isomorphism 
$$
\xymatrix{
\tilde{\iota}^{-1}(j_{D\ast}\mathcal{H}^{0}\DR\End \mathcal{N}) \ar[r]^-{\sim}    &    (j_{0\ast}\mathcal{H}^{0}\DR\End \mathcal{N}_{C}    )_{|\partial \tilde{C}}    
}
$$
deduced from Cauchy-Kowaleska theorem for flat connections induces  for every $R\in \mathds{C}$-alg a canonical isomorphism
$$
\xymatrix{
\tilde{\iota}^{-1}\St_{\mathcal{N}}(R) \ar[r]^-{\sim}    &    \St_{\mathcal{N}_C}(R)
}
$$
compatible with \eqref{inj}.

\subsection{Preferred matricial representations in dimension 1}\label{prefered}
Let us now restrict to the dimension 1 case and let $d$ and $d^{\prime}$ be two consecutive Stokes line of $\mathcal{I}$. Let $a_1, \dots, a_k$ be the elements of $\mathcal{I}$ noted in increasing order for the total order $<_{]d, d^{\prime}[}$. In the basis $\mathbf{e}:=(\mathbf{e}_{a_1}, \dots, \mathbf{e}_{a_k})$, the morphism \eqref{inj} identifies $g\in \Gamma(]d, d^{\prime}[ , \St_{\mathcal{N}}(R))$  with the subgroup of $\GL_r(R)$ of upper-triangular matrices with only $1$ on the diagonal. Let $I$ be a strict open interval meeting $]d, d^{\prime}[$. For $i\in \llbracket 1, r \rrbracket$, let $j_i \in \llbracket 1, k \rrbracket$ such that $\mathbf{e}_i$ is an element of $\mathbf{e}_{a_{j_i}}$. Note that $j_i$ increases with $i$. We denote by $\Jump_{\mathcal{N}}(I)$ the set of $( i_1,i_2)$,  $1 \leq i_1<i_2\leq r$,  such that $j_{i_1}<j_{i_2}$ and $a_{j_{i_1}} \nless_I a_{j_{i_2}}$.


\subsection{Stokes torsors}
For $\mathcal{T}  \in H^{1}(\mathds{T}, \St_{\mathcal{N}}(R))$ and a map of ring $\varphi: R\longrightarrow S$, we denote by $\mathcal{T}(S)$ the push-forward of $\mathcal{T}$ to $S$. Concretely, if $\mathcal{T}$  is given by a cocycle $(g_{ij})$, a cocycle for $\mathcal{T}(S)$ is  $(\varphi(g_{ij}))$. There is a canonical morphism of sheaves $\mathcal{T}\longrightarrow \mathcal{T}(S)$ equivariant for the morphism of sheaves of groups $\St_{\mathcal{N}}(R)\longrightarrow \St_{\mathcal{N}}(S)$. If $t$ is a section of $\mathcal{T}$, we denote by $t(S)$ the associated section of $\mathcal{T}(S)$. \\ \indent
For $\mathcal{T}  \in H^{1}(\mathds{T}, \St_{\mathcal{N}}(R))$, let
$T_{\mathcal{T}}H^{1}(\mathds{T}, \St_{\mathcal{N}})$ 
be the tangent space of $H^{1}(\mathds{T}, \St_{\mathcal{N}})$ at $\mathcal{T}$. By definition, this is the set of  $\mathcal{T}^{\prime}  \in H^{1}(\mathds{T}, \St_{\mathcal{N}}(R[\epsilon]))$ such that 
$\mathcal{T}^{\prime}(R)=\mathcal{T}$.

\subsection{Automorphisms of Stokes torsors}\label{auto}
In this subsection, we give a proof of the following
\begin{proposition}\label{autoId}
Stokes torsors have no non trivial isomorphisms.
\end{proposition}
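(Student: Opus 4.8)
The plan is to identify the automorphism group of $\mathcal{T}$ with the global sections of its inner twist and to make these vanish by dévissage along the level filtration of $\St_{\mathcal{N}}$. Concretely, fix $\mathcal{T}\in H^{1}(\mathds{T},\St_{\mathcal{N}}(R))$, trivialized over a cover $(\mathcal{S}_{i})$ by a cocycle $(g_{ij})$ with $g_{ij}\in\Gamma(\mathcal{S}_{ij},\St_{\mathcal{N}}(R))$. As for any torsor under a sheaf of groups, an automorphism of $\mathcal{T}$ is the same datum as a family $(\gamma_{i})$ with $\gamma_{i}\in\Gamma(\mathcal{S}_{i},\St_{\mathcal{N}}(R))$ and $\gamma_{i}=g_{ij}\gamma_{j}g_{ij}^{-1}$ on $\mathcal{S}_{ij}$; in other words $\Aut(\mathcal{T})=\Gamma(\mathds{T},{}^{\mathcal{T}}\St_{\mathcal{N}}(R))$, where ${}^{\mathcal{T}}\St_{\mathcal{N}}(R)$ is the twist of $\St_{\mathcal{N}}(R)$ by $\mathcal{T}$ for the conjugation action. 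Everything thus reduces to proving $\Gamma(\mathds{T},{}^{\mathcal{T}}\St_{\mathcal{N}}(R))=\{1\}$.

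Here I would use the local structure of $\St_{\mathcal{N}}$ recalled in \ref{localstructure} (see also \cite{Mochizuki1}, and \cite{BV} in dimension one): $\St_{\mathcal{N}}(R)$ carries a finite decreasing filtration $\St_{\mathcal{N}}(R)=\mathcal{F}_{0}\supseteq\mathcal{F}_{1}\supseteq\dots\supseteq\mathcal{F}_{N}=\{1\}$ by normal subsheaves of groups — the filtration by level — with $[\St_{\mathcal{N}}(R),\mathcal{F}_{k}]\subseteq\mathcal{F}_{k+1}$, and whose graded pieces are sheaves of $R$-modules of the shape
$$\mathcal{F}_{k}/\mathcal{F}_{k+1}\;\simeq\;\bigoplus_{(a,b)\in P_{k}}(j_{ab})_{!}\bigl(\underline{R}\otimes_{\mathds{C}}\Hom(E_{b},E_{a})\bigr),$$
where $P_{k}$ is a set of pairs of $\mathcal{I}$ and $j_{ab}\colon U_{ab}:=\{F_{a,b}<0\}\hookrightarrow\mathds{T}$ is the inclusion of the open locus on which $e^{a-b}$ decays. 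The key observation — the one that makes the proposition no harder than the vanishing of $\Gamma(\mathds{T},\St_{\mathcal{N}}(R))$ itself — is that the relation $[\St_{\mathcal{N}}(R),\mathcal{F}_{k}]\subseteq\mathcal{F}_{k+1}$ says exactly that the conjugation action of $\St_{\mathcal{N}}(R)$ on each graded piece $\mathcal{F}_{k}/\mathcal{F}_{k+1}$ is trivial; hence twisting by $\mathcal{T}$ does nothing on the graded level, and ${}^{\mathcal{T}}\St_{\mathcal{N}}(R)$ inherits a filtration by the ${}^{\mathcal{T}}\mathcal{F}_{k}$ with the same graded pieces $\mathcal{F}_{k}/\mathcal{F}_{k+1}$. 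Plugging the short exact sequences $1\to{}^{\mathcal{T}}\mathcal{F}_{k+1}\to{}^{\mathcal{T}}\mathcal{F}_{k}\to\mathcal{F}_{k}/\mathcal{F}_{k+1}\to1$ into the long exact sequence of pointed cohomology sets and arguing by decreasing induction on $k$, we are reduced to proving $\Gamma(\mathds{T},(j_{ab})_{!}\underline{M})=0$ for $M:=R\otimes_{\mathds{C}}\Hom(E_{b},E_{a})$ and every $a\neq b$ in $\mathcal{I}$.

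To see this, note that a global section $s$ of $(j_{ab})_{!}\underline{M}$ is a locally constant $M$-valued function on $U_{ab}$ whose support $\Supp(s)$ is closed in $\mathds{T}$. By \eqref{FabinduitsurT} the restriction of $F_{a,b}$ to $\mathds{T}$ equals $r_{ab}\cos\bigl(\theta_{ab}-\sum_{k}\alpha_{ab}(k)\theta_{k}\bigr)$ with $(\alpha_{ab}(k))_{k}\neq0$ since $a\neq b$, so $U_{ab}$ is a proper open subset of the connected space $\mathds{T}$; in particular each connected component $V$ of $U_{ab}$ is proper in $\mathds{T}$, hence has non-empty boundary, and $\partial V\subseteq\mathds{T}\setminus U_{ab}$ because $V$ is a component of $U_{ab}$. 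As the stalks of $(j_{ab})_{!}\underline{M}$ vanish along $\mathds{T}\setminus U_{ab}$, a section equal to a non-zero constant on some component $V$ would satisfy $V\subseteq\Supp(s)$ while $\partial V\cap\Supp(s)=\emptyset$, so $\Supp(s)$ would fail to be closed — a contradiction. Hence $s=0$, which completes the dévissage and shows that $\mathcal{T}$ has no non-trivial automorphisms.

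The only non-formal ingredient above is the level filtration of $\St_{\mathcal{N}}$ together with the description of its graded pieces as $j_{!}$-extensions of constant sheaves; this is classical over $S^{1}$ and belongs to the local structure theory of good meromorphic connections in several variables. I expect this to be the main point to secure; the remaining steps — the passage to the inner twist, the triviality of conjugation on graded pieces, and the cohomology computation over the torus — are routine.
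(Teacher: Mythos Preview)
Your argument hinges on a single filtration of $\St_{\mathcal{N}}(R)$ that is simultaneously (i) a central series, i.e.\ $[\St_{\mathcal{N}}(R),\mathcal{F}_k]\subset\mathcal{F}_{k+1}$, and (ii) has graded pieces equal to direct sums of $(j_{ab})_!M_{ab}$. These two requirements are in tension, and in general no filtration satisfies both. The usual level filtration (by pole order of $a-b$) has the graded pieces you describe, but it is \emph{not} central: if $g=\Id+M$ with $M$ in the $(a,b)$-block at level $\ell_0$ and $h=\Id+N$ with $N$ in the $(b,c)$-block at level $\ell_1\geq\ell_0$, then $[g,h]-\Id$ lands in the $(a,c)$-block at level $\max(\ell_0,\ell_1)=\ell_1$, not $\ell_1+1$; so conjugation by $\St_{\mathcal{N}}$ acts nontrivially on $\mathcal{F}_{\ell_1}/\mathcal{F}_{\ell_1+1}$ and the twist by $\mathcal{T}$ genuinely changes the graded piece. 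Conversely, the lower central series \emph{is} central, but its graded pieces are not of the form $\bigoplus_{(a,b)\in P_k}(j_{ab})_!M_{ab}$: already for $\mathcal{I}=\{0,z^{-2},z^{-1}\}$ in dimension one, the $(2,3)$-component of $\mathfrak{g}/[\mathfrak{g},\mathfrak{g}]$ is supported on a half-open set and is a genuine quotient $(j_{U_{23}})_!M/(j_{W})_!M$, not an extension by zero from an open. So the ``main point to secure'' is not a matter of looking up references; as stated, the dévissage breaks.

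The paper's proof proceeds quite differently and avoids this issue entirely. It first restricts to curves to reduce to dimension one, then over $S^1$ uses Babbitt--Varadarajan representability to package all automorphism groups into a single group scheme $G\to H^1$, reduces to showing $G\to H^1$ is an isomorphism on points over algebraically closed fields (by smoothness), and finally invokes the Lefschetz principle to reduce to $R=\mathds{C}$. Over $\mathds{C}$ the key input is Malgrange--Sibuya: writing the cocycle as $g_{ij}=x_ix_j^{-1}$ with $x_i\in\GL_r(\mathcal{A})$, an automorphism $(h_i)$ produces a global section $x_i^{-1}h_ix_i$ of $\Id+M_r(\mathcal{A}^{<0})$, which is trivial since $\mathcal{A}^{<0}$ has no global sections. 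This bypasses any filtration of the Stokes sheaf and any cohomological vanishing on $\mathds{T}$.
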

\begin{proof}
Using restriction to curve, we are left to treat the one dimensional case. To simplify notations, we denote by $H^1$ the affine space representing the functor $H^{1}(S^{1}, \St_{\mathcal{N}})$ and  by $\mathds{C}[H^1]$ its algebra of functions. Let $d_1, \dots , d_N$ be the Stokes lines of $\mathcal{N}$ indexed consecutively by $\mathds{Z}/N\mathds{Z}$. We denote by 
$$\St_{\mathcal{N}, d_i}^{H^1}:=H^1\times_{\mathds{C}} \St_{\mathcal{N}, d_i}$$ 
the base change of the complex algebraic group $\St_{\mathcal{N}, d_i}$ to $H^1$. Let $\mathcal{T}^{\univ}_{\mathcal{N}}$ be the universal Stokes torsor given by Babbitt-Varadarajan representability theorem in dimension 1. For each $i\in \mathds{Z}/N\mathds{Z}$, let us choose a trivialisation of $\mathcal{T}^{\univ}_{\mathcal{N}}$ on the interval $]d_{i-1}, d_{i+1}[$ and let 
$$g_{ii+1}^{\univ}\in \Gamma\left(]d_i , d_{i+1}[, \St_{\mathcal{N}}(\mathds{C}[H^1])\right)$$ 
be the associated cocycle. Let $G\longrightarrow H^1$ be the subgroup scheme  of 
$$
\prod_{i\in \mathds{Z}/N\mathds{Z}} \St_{\mathcal{N}, d_i}^{H^1}
$$
of $N$-uples $(h_1, \dots, h_N)$ satisfying 
$$
h_i g_{ii+1}^{\univ}=g_{ii+1}^{\univ} h_{i+1}
$$
in $H^1\times_{\mathds{C}}\GL_r$. For $R\in \mathds{C}$-alg, $\mathcal{T}\in H^{1}(S^1, \St_{\mathcal{N}}(R))$ corresponds to a unique morphism of $\mathds{C}$-algebras $f: \mathds{C}[H^1]\longrightarrow  R$, and a cocycle for $\mathcal{T}$ is given by applying $f$ to the $g_{ii+1}^{\univ}$. Hence,  automorphisms of $\mathcal{T}$ are in bijection with $R$-points of $\Spec R \times_{H^1} G$. To prove \ref{autoId}, we are thus left to prove that $G$ is the trivial group scheme over $H^1$, that is, that the structural morphism of $G$ is an isomorphism.\\ \indent
As a complex algebraic group, $G$ is smooth. The affine scheme $H^1$ is smooth as well. So to prove that
$G\longrightarrow H^1$ is an isomorphism, it is enough to prove that it induces a bijection at the level of the underlying topological spaces.  This can be checked above each point of $H^1$, that is to say after base change to a field $K$ of finite type over $\mathds{C}$.  It is enough to show the bijectivity after base change to an algebraic closure $\overline{K}$ of $K$. Hence, we are left to prove Theorem \ref{autoId} for $R=\overline{K}$.  By Lefschetz principle, we can suppose $R=\mathds{C}$.  \\ \indent
Let $\mathcal{T}\in H^{1}(S^1, \St_{\mathcal{N}}(\mathds{C}))$ and let $(g_{ij})$ be a cocycle of $\mathcal{T}$ with respect to an open cover $(U_i)$. As already seen, an automorphism of  $\mathcal{T}$ is equivalent to the data of sections $h_i\in \Gamma (U_i, \St_{\mathcal{N}}(\mathds{C}))$ satisfying 
\begin{equation}\label{petiterelation}
h_i g_{ij}=g_{ij} h_j  
\end{equation} 
The cocycle $g$ defines an element of $H^{1}(S^1, \Id+ M_r( \mathcal{A}^{<0}))$. At the cost of refining the cover, Malgrange-Sibuya theorem \cite[I 4.2.1]{BV} asserts the existence of sections $x_i\in \Gamma(U_i, \GL_r(\mathcal{A}))$ such that $x_i x_j^{-1}=g_{ij}$ on $U_{ij}$. Since
$$
x_i^{-1} h_i x_i=x_j^{-1}(g_{ij}^{-1}h_i g_{ij}) x_j =  x_j^{-1} h_j x_j
$$
the $x_i^{-1} h_i x_i$ glue into a global section of $\Id+ M_r( \mathcal{A}^{<0})$. Since $\mathcal{A}^{<0}$ has no non zero global section, we deduce $h_i =\Id$ for every $i$ and \ref{autoId} is proved.
\end{proof}

\subsection{$\mathcal{I}$-good open sets}\label{Igoodopen}
A $\mathcal{I}$-good open set at  $x\in\mathds{T}$ 
is a product $\mathcal{S}$ of strict intervals containing $x$ and such that 
$$
 \left\{
    \begin{array}{ll}
     \mathcal{S}\cap \mathcal{H}_{a,b}=\emptyset &   \text{ if } x\notin \mathcal{H}_{a,b}\\
 \mathcal{S}\cap \mathcal{H}_{a,b} \text { is connected } & \text{ if }  x\in \mathcal{H}_{a,b}
    \end{array}
\right.
$$
Every $x\in\mathds{T}$ admits a fundamental system of neighbourhoods which are $\mathcal{I}$-good open sets.

\begin{lemme}\label{crittri}
For every $R\in \mathds{C}$-alg and every  $\St_{\mathcal{N}}(R)$-torsors $\mathcal{T}$, the restriction of $\mathcal{T}$ to a $\mathcal{I}$-good open set $\mathcal{S}$ at $x\in \mathds{T}$ is trivial.
\end{lemme}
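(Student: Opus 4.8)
The plan is to produce a global section of $\mathcal T$ over $\mathcal S$ — which for a torsor under a sheaf of groups is the same as triviality — by dévissage along the unipotent structure of $\St_{\mathcal N}(R)$; this will reduce the assertion to a cohomological vanishing on $\mathcal S$ that the goodness of $\mathcal S$ makes automatic. First I would fix a total order $\leq$ on $\mathcal I$ refining $<_{\mathcal S}$ and order the basis $\mathbf e_{1},\dots,\mathbf e_{r}$ of $E$ accordingly, so that $\mathbf e_{i}$ belonging to $\mathbf e_{a}$, $\mathbf e_{j}$ to $\mathbf e_{b}$ and $a\leq b$ force $i\leq j$. Via \eqref{inj}, the sheaf $G:=\St_{\mathcal N}(R)_{|\mathcal S}$ is then a subsheaf of groups of the constant sheaf $\mathcal U^{+}$ of upper triangular matrices over $R$ with $1$'s on the diagonal, and intersecting $G$ with the descending central series $\mathcal U^{+}=\mathcal U^{+}_{1}\supseteq\mathcal U^{+}_{2}\supseteq\cdots\supseteq\mathcal U^{+}_{r}=1$ (where $\mathcal U^{+}_{k}$ consists of the matrices vanishing on the first $k-1$ superdiagonals) yields a finite filtration $G=G_{1}\supseteq G_{2}\supseteq\cdots\supseteq G_{r}=1$ by normal subsheaves of groups with $[G,G_{k}]\subseteq G_{k+1}$. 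Hence each $\operatorname{gr}_{k}:=G_{k}/G_{k+1}$ is an abelian sheaf, central in $G/G_{k+1}$. Setting $\mathcal T_{k}:=\mathcal T/G_{k}$, a torsor under $G/G_{k}$, the projection $\mathcal T_{k+1}\to\mathcal T_{k}$ is a torsor under the pullback of $\operatorname{gr}_{k}$, so its pullback along a section of $\mathcal T_{k}$ is a $\operatorname{gr}_{k}$-torsor on $\mathcal S$ whose class lies in $H^{1}(\mathcal S,\operatorname{gr}_{k})$, and the vanishing of this group lets one lift the section to $\mathcal T_{k+1}$. As $\mathcal T_{1}=\mathcal T/G$ is the terminal sheaf and $\mathcal T_{r}=\mathcal T$, it therefore suffices to prove $H^{1}(\mathcal S,\operatorname{gr}_{k})=0$ for every $k$.

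The next step is to identify $\operatorname{gr}_{k}$. Computing stalks with the help of \eqref{tri} and \eqref{sortleR}, a section of $\St_{\mathcal N}(R)$ near a point $y\in\mathcal S$ is of the form $\Id+f$ with $p_{a}fi_{b}=0$ unless $F_{a,b}(y)<0$; consequently $\operatorname{gr}_{k}$ is the direct sum, over the pairs $i<j$ on the $k$-th superdiagonal with $\mathbf e_{i}\in\mathbf e_{a}$, $\mathbf e_{j}\in\mathbf e_{b}$ and $a\neq b$, of the subsheaf $(\kappa_{a,b})_{!}\underline R$ of $\underline R$, where $\kappa_{a,b}\colon\{F_{a,b}<0\}\cap\mathcal S\hookrightarrow\mathcal S$ is the inclusion of the open set over which the corresponding matrix entry is allowed to be nonzero. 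Using the explicit formula \eqref{FabinduitsurT} for $F_{a,b}$ on $\mathds T$ — a cosine of an affine function of $(\theta_{1},\dots,\theta_{m})$, transverse to its zeros — together with the defining connectedness property \ref{Igoodopen} of an $\mathcal I$-good open set and the identification of $\mathcal S$ with a box $U(\mathcal S)\subset\mathds R^{m}$, one sees that $\{F_{a,b}<0\}\cap\mathcal S$ is either empty, or all of $\mathcal S$, or one of the two open convex half-boxes into which the connected slice $\mathcal S\cap\mathcal H_{a,b}$ cuts the box $\mathcal S$.

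It remains to check that $H^{1}(\mathcal S,(\kappa_{a,b})_{!}\underline R)=0$ in these three cases, which is formal: the first is trivial; in the second $(\kappa_{a,b})_{!}\underline R=\underline R$ and $H^{1}(\mathcal S,\underline R)=0$ since $\mathcal S$ is contractible; in the third, writing $U$ for the half-box and $Z=\mathcal S\setminus U$ for the complementary closed half-box — which is convex, hence connected and contractible — the exact sequence $0\to(\kappa_{a,b})_{!}\underline R\to\underline R_{\mathcal S}\to\iota_{Z\ast}\underline R\to 0$ attached to the closed immersion $\iota_{Z}\colon Z\hookrightarrow\mathcal S$ gives a long exact sequence in which $H^{0}(\mathcal S,\underline R)\to H^{0}(Z,\underline R)$ is the identity of $R$ and $H^{1}(\mathcal S,\underline R)=0$, whence $H^{1}(\mathcal S,(\kappa_{a,b})_{!}\underline R)=0$. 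This closes the dévissage. I expect the middle step to be the genuine obstacle: expressing each $\operatorname{gr}_{k}$ as a direct sum of extensions by zero of constant sheaves along convex half-boxes means simultaneously controlling the combinatorics of $<_{\mathcal S}$, the passage from $\St_{\mathcal N}(R)$ to its stalks, and — decisively — the connectedness built into the notion of an $\mathcal I$-good open set; the cohomological input afterwards is soft.
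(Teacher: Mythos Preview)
There is a genuine gap in the setup of your dévissage. The claim that $G:=\St_{\mathcal N}(R)_{|\mathcal S}$ is a subsheaf of the constant sheaf $\mathcal U^{+}$ of upper triangular matrices (in a basis ordered by a total order refining $<_{\mathcal S}$) is false precisely in the interesting case, namely when $x$ lies on some Stokes hyperplane $\mathcal H_{a,b}$. Indeed, for such a pair $(a,b)$ neither $a<_{\mathcal S}b$ nor $b<_{\mathcal S}a$, so your total order has made an arbitrary choice, say $a<b$; but on the half of $\mathcal S$ where $F_{a,b}>0$ the \emph{stalks} of $\St_{\mathcal N}(R)$ allow a nonzero $(b,a)$ block, which sits strictly below the diagonal in your ordering. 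So $G\not\subset\mathcal U^{+}$ as sheaves. Worse, $G_k:=G\cap\mathcal U^{+}_k$ need not be normal in $G$: take $\mathcal I=\{a,b,c\}$ with one-dimensional pieces, total order $a<b<c$, and a point $y$ with $b<_y a$, $a<_y c$, $b<_y c$; then $G_y$ contains a lower-triangular element $g$ with $g_{ba}\neq 0$, and conjugating the element $h\in G_y\cap\mathcal U^{+}_2$ with $h_{ac}\neq 0$ by $g$ produces a nonzero entry on the first superdiagonal, so $ghg^{-1}\notin\mathcal U^{+}_2$. Your central-series filtration therefore collapses, and with it the identification of $\operatorname{gr}_k$ --- which in any case only accounts for the $(a,b)$ half and omits the $(b,a)$ half for the incomparable pairs.

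The cohomological endgame you propose (the three-case computation of $H^{1}(\mathcal S,(\kappa_{a,b})_{!}\underline R)$ using convexity of the half-boxes) is correct and would be the right input if one had a filtration of $G$ by normal subsheaves whose graded pieces are honest direct sums of such extensions by zero; but the intrinsic lower central series of $G$ does not have this shape either, since the support of the commutator piece $\mathfrak g^{(2)}$ at a pair $(a,b)$ is the union $\bigcup_c\{F_{a,c}<0\}\cap\{F_{c,b}<0\}$, which need not be convex. The paper circumvents this entirely: instead of a cohomological dévissage, it chooses a germ $t\in\mathcal T_x$ and extends it by descending induction along the stratification $\mathcal H(d)=\bigcup_{|I|=d}\bigcap_{i\in I}(\mathcal H_{a_i,b_i}\cap\mathcal S)$ given by the Stokes hyperplanes through $x$, using that on each stratum the order \eqref{order} --- hence $\St_{\mathcal N}(R)$ --- is constant and the strata are contractible by $\mathcal I$-goodness.
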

\begin{proof}
Let $\mathcal{H}_{a_1, b_1}, \dots, \mathcal{H}_{a_k, b_k}$ be the Stokes hyperplanes passing through $x$. For $I\subset \llbracket 1, k \rrbracket$ non empty, we set 
$$
\mathcal{H}_{I}:=\bigcap_{i\in I}(\mathcal{H}_{a_i, b_i}\cap \mathcal{S})
$$ 
Finally, we set $\mathcal{H}_{\emptyset}=\mathcal{S}$.  Let us choose $t\in \mathcal{T}_x$. We argue by decreasing recursion on $d$ that $t$ extends to 
$$
\mathcal{H}(d):= \bigcup_{I\subset \llbracket 1, k \rrbracket, |I|=d}\mathcal{H}_{I}
$$
We know by $\mathcal{I}$-goodness that $\mathcal{H}(k)$ is homeomorphic to a $\mathds{R}$-vector space. Since the order \eqref{order} is constant on $\mathcal{H}(k)$, the sheaf of group $\St_{\mathcal{N}}(R)$ is constant on $\mathcal{H}(k)$, the section $t$ extends uniquely to  $\mathcal{H}(k)$. We now suppose $d<k$ and assume that $t$ extends to $\mathcal{H}(d+1)$ into a section that we still denote by $t$. If we set 
$$
 \mathcal{H}_{I^+} :=\displaystyle{\bigcup_{i \in \llbracket 1, k \rrbracket \setminus I}} \mathcal{H}_{I\cup \{i\}} \text{ and   }    \mathcal{H}_{I}^- :=  \mathcal{H}_{I}\setminus  \mathcal{H}_{I^+}
$$
We have set theoretically
$$
\mathcal{H}(d)=\mathcal{H}(d+1)\bigsqcup \bigsqcup_{\underset{|I|=d}{I\subset \llbracket 1, k \rrbracket }}\mathcal{H}_{I}^-
$$
Hence, we have to extend $t$ to each $\mathcal{H}_{I}^-$ in a compatible way. Let $I\subset \llbracket 1, k \rrbracket$ of cardinal $d$. By admissibility, the data of  the $\mathcal{H}_{I\cup \{i\}}$, $i\in \llbracket 1, k \rrbracket\setminus I$ inside $\mathcal{H}_{I}$ is topologically equivalent to that of a finite number of hyperplanes in a  $\mathds{R}$-vector space. Hence, a connected component  
$F_I\in \pi_0(\mathcal{H}_{I}^-)$ is contractible and
$\mathcal{H}_{I^+}\subset \mathcal{H}(d+1)$
admits an open neighbourhood $U$ whose trace on $F_I$ is connected. Since the order \eqref{order} is constant on $F_I$, the  sheaf of group $\St_{\mathcal{N}}(R)$ is constant on $F_I$. Hence, $t_{|U\cap F_I}$ extends uniquely to a section $t_{F_I}$ on $F_I$. If $F_I^{\prime}\in \pi_0(\mathcal{H}_{I}^-)$ is distinct from $F_I$, 
$$
\overline{F_I}\cap \overline{F_{I}^{\prime}}\subset  \mathcal{H}_{I^+}
$$
hence $t_{| \mathcal{H}_{I^+}} $ and the   $t_{F_I}$ glue into a section $t_I$ of $\mathcal{T}$ on $\mathcal{H}_I$. For 
$I^{\prime}\subset \llbracket 1, k \rrbracket$ distinct from $I^{\prime}$ and of cardinal $d$, and $F_{I^{\prime}}\in \pi_0(\mathcal{H}_{I^{\prime}}^-)$, 
we have 
$$
\overline{F_I}\cap \overline{F_{I^{\prime}}}\subset \mathcal{H}_{I}\cap \mathcal{H}_{I^{\prime}}\subset \mathcal{H}(d+1)
$$
Hence $t$ and the $t_{I}$ and glue into a section of $\mathcal{T}$ over $\mathcal{H}(d)$ and \ref{crittri} is proved.

\end{proof}

\section{Skeletons and Stokes torsors}\label{partskeleton}
Let $X$ be a smooth real manifold. Let $\mathscr{C}$ be a set of closed curves in $X$, let $\Sh_X$ be the category of sheaves on $X$, set 
$$\Int \mathscr{C}:=\displaystyle{\bigcup_{C, C^{\prime}\in \mathscr{C}}} C \cap C^{\prime} $$ 
and for every $x\in \Int \mathscr{C}$, set
$\mathscr{C}(x):=\{C\in \mathscr{C} \text{ with } x\in C\}$.
\subsection{Definition}\label{defskeleton}
 We define the category of $\mathscr{C}$-skeleton $\Sk_{X}(\mathscr{C})$ on $X$ as the category whose objects are systems $(\mathcal{F}_\mathscr{C}, \iota_\mathscr{C})$ where $\mathcal{F}_\mathscr{C}=(\mathcal{F}_C)_{C\in \mathscr{C}}$
is a family of sheaves on the curves $C\in \mathscr{C}$, and where $\iota_\mathscr{C}$ is a collection of identifications
$$
\xymatrix{\iota_{C, C^{\prime}}(x): \mathcal{F}_{C, x}\ar[r]^-{\sim}  &  \mathcal{F}_{C^{\prime}, x}}
$$ 
for $x\in \Int \mathscr{C}$ and $C, C^{\prime}\in \mathscr{C}(x)$, satisfying
\begin{align*}
\iota_{C, C}(x)& =\id \\
  \iota_{ C, C^{\prime}}(x)             & =   \iota_{ C^{\prime}, C}^{-1}(x)  \\
   \iota_{ C, C^{\prime \prime}}(x)  &  =   \iota_{ C^{\prime}, C^{\prime \prime}}  \iota_{ C, C^{\prime }}(x) 
\end{align*}
A morphism $(\mathcal{F}_\mathscr{C}, \iota_\mathscr{C})\longrightarrow (\mathcal{G}_\mathscr{C}, \kappa_\mathscr{C})$ in $\Sk_{X}(\mathscr{C})$ is a collection of morphisms of sheaves $f_C :\mathcal{F}_C\longrightarrow \mathcal{G}_C$ such that the following diagrams commute for every $x\in \Int \mathscr{C}$ and $C, C^{\prime}\in \mathscr{C}(x)$
$$
\xymatrix{
\mathcal{F}_{C, x}\ar[r]^-{\sim}   \ar[d]_-{f_{C,x}} & \mathcal{F}_{C^{\prime}, x}  \ar[d]^-{f_{C^{\prime},x}}           \\
\mathcal{G}_{C, x}\ar[r]^-{\sim}  &   \mathcal{G}_{C^{\prime}, x}
}
$$
Restriction induces a functor $\sk_\mathscr{C} : \Sh_X \longrightarrow \Sk_{X}(\mathscr{C})$ called the \textit{$\mathscr{C}$-skeleton functor}.

\subsection{Coskeleton}
We now suppose that $\mathscr{C}$ covers $X$, that is every $x\in X$ belongs to at least one $C\in \mathscr{C}$. Take $(\mathcal{F}_\mathscr{C}, \iota_\mathscr{C})\in \Sk_{X}(\mathscr{C})$. The set
$$
E(\mathcal{F}_\mathscr{C}):=\displaystyle{\bigsqcup_{C\in \mathscr{C}, x\in C}}\mathcal{F}_{C,x}
$$
is endowed with the equivalence relation
$$
(C, x, s\in \mathcal{F}_{C,x})\sim (C^{\prime}, x, s^{\prime}\in \mathcal{F}_{C^{\prime},x})  \text{ if and only if }  s^{\prime}=\iota_{C, C^{\prime}}(x)(s)
$$
Let $E(\mathcal{F}_\mathscr{C}, \iota_{\mathscr{C}})$ be the quotient of $E(\mathcal{F}_\mathscr{C})$ by this relation.
The surjection $E(\mathcal{F}_\mathscr{C})\longrightarrow X$ induces a surjection $p: E(\mathcal{F}_\mathscr{C}, \iota_{\mathscr{C}}) \longrightarrow X$. Let
$\cosk_{\mathscr{C}}(\mathcal{F}_\mathscr{C}, \iota_{\mathscr{C}})$ be the functor associating to every open $U$ in $X$ the set of sections $s$ of $p$ over $U$ such that for every $C\in \mathscr{C}$, there exists $s_C \in \Gamma(U\cap C, \mathcal{F}_C)$ satisfying for every $x\in U\cap C$
$$
s(x)=(C,x, s_C(x))  \text{ in } E(\mathcal{F}_\mathscr{C}, \iota_{\mathscr{C}})
$$
Equivalently, if $s(C,x)$ denotes the unique representative of 
$s(x)$ associated to $(C,x)$, the above equation means $s(C,x)=s_C(x)$ in $\mathcal{F}_{C,x}$.\\ \indent
The functor $\cosk_{\mathscr{C}}(\mathcal{F}_\mathscr{C}, \iota_{\mathscr{C}})$ is trivially a presheaf  on $X$ and one checks that it is sheaf. We thus have a well-defined functor $\cosk_{\mathscr{C}} : \Sk_{X}(\mathscr{C}) \longrightarrow \Sh_X$ called the \textit{$\mathscr{C}$-coskeleton functor}.
\begin{lemme}\label{adjoint}
The functor $\cosk_{\mathscr{C}}$ is right adjoint to $\sk_{\mathscr{C}}$.
\end{lemme}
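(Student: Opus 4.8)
The plan is to produce, for every sheaf $\mathcal{F}$ on $X$ and every skeleton $(\mathcal{G}_\mathscr{C},\iota_\mathscr{C})\in\Sk_X(\mathscr{C})$, a bijection
$$
\Phi\colon \Hom_{\Sk_X(\mathscr{C})}\bigl(\sk_{\mathscr{C}}\mathcal{F},\,(\mathcal{G}_\mathscr{C},\iota_\mathscr{C})\bigr)\longrightarrow \Hom_{\Sh_X}\bigl(\mathcal{F},\,\cosk_{\mathscr{C}}(\mathcal{G}_\mathscr{C},\iota_\mathscr{C})\bigr),
$$
and then to check that it is natural in both variables. The key auxiliary object is, for each $C\in\mathscr{C}$ with inclusion $\iota_C\colon C\hookrightarrow X$, the canonical \emph{$C$-component} morphism of sheaves on $X$
$$
p_C\colon \cosk_{\mathscr{C}}(\mathcal{G}_\mathscr{C},\iota_\mathscr{C})\longrightarrow \iota_{C\ast}\mathcal{G}_C,\qquad s\longmapsto s_C,
$$
which is well defined because, in the definition of $\cosk_{\mathscr{C}}$, the section $s_C$ attached to $s$ is unique: its germ at $x$ is the unique representative $s(C,x)$ of $s(x)$ at $(C,x)$. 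Transposing $p_C$ along $\iota_C^{-1}\dashv\iota_{C\ast}$ yields $\bar p_C\colon (\cosk_{\mathscr{C}}(\mathcal{G}_\mathscr{C},\iota_\mathscr{C}))|_C\to\mathcal{G}_C$.

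Given $g\colon \mathcal{F}\to\cosk_{\mathscr{C}}(\mathcal{G}_\mathscr{C},\iota_\mathscr{C})$, I set $\Phi(g)=(f_C)_{C\in\mathscr{C}}$ with $f_C:=\bar p_C\circ (g|_C)\colon \mathcal{F}|_C\to\mathcal{G}_C$. On stalks at $x\in C$ this reads $t_x\mapsto g(t)(C,x)$, where I write $g(t)(C,x)\in\mathcal{G}_{C,x}$ for the $(C,x)$-representative of the germ of $g(t)$ at $x$. If $x\in C\cap C'$, the equivalence relation defining $E(\mathcal{G}_\mathscr{C},\iota_\mathscr{C})$ gives $\iota_{C,C'}(x)\bigl(g(t)(C,x)\bigr)=g(t)(C',x)$, i.e. $\iota_{C,C'}(x)(f_{C,x}(t_x))=f_{C',x}(t_x)$; hence $(f_C)_C$ is indeed a morphism in $\Sk_X(\mathscr{C})$. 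Conversely, given a skeleton morphism $\phi=(f_C)_C$, I define $\Psi(\phi)=g$ over an open $U$ by sending $t\in\mathcal{F}(U)$ to the section $x\mapsto\bigl[(C,x,f_{C,x}(t_x))\bigr]$ of $p\colon E(\mathcal{G}_\mathscr{C},\iota_\mathscr{C})\to X$: this is well defined (independent of the chosen $C\ni x$) exactly by compatibility of $\phi$ with the $\iota_{C,C'}(x)$, it lies in $\cosk_{\mathscr{C}}(\mathcal{G}_\mathscr{C},\iota_\mathscr{C})(U)$ by taking $s_C:=f_C(t|_{U\cap C})$, and it commutes with restriction, hence is a morphism of sheaves.

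It then remains to check that $\Phi$ and $\Psi$ are mutually inverse. Starting from $\phi=(f_C)_C$, forming $g=\Psi(\phi)$ and then $\bar p_C\circ(g|_C)$ recovers, on stalks, $t_x\mapsto g(t)(C,x)=f_{C,x}(t_x)$, so $\Phi(\Psi(\phi))=\phi$. Starting from $g$, one has $\Psi(\Phi(g))(t)(x)=\bigl[(C,x,g(t)(C,x))\bigr]$, which is the germ of $g(t)$ at $x$ itself, since $(C,x,g(t)(C,x))$ is by definition one of its representatives; thus $\Psi(\Phi(g))=g$. Naturality in $\mathcal{F}$ and in $(\mathcal{G}_\mathscr{C},\iota_\mathscr{C})$ follows by unwinding the definitions, using that $\bar p_C$ is natural in $(\mathcal{G}_\mathscr{C},\iota_\mathscr{C})$ and that $\Phi$ is built out of pre- and post-composition. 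Equivalently, one may package this as the unit $\eta_\mathcal{F}\colon \mathcal{F}\to\cosk_{\mathscr{C}}\sk_{\mathscr{C}}\mathcal{F}$, $t\mapsto(x\mapsto t_x)$, and the counit $\varepsilon\colon \sk_{\mathscr{C}}\cosk_{\mathscr{C}}(\mathcal{G}_\mathscr{C},\iota_\mathscr{C})\to(\mathcal{G}_\mathscr{C},\iota_\mathscr{C})$ given by $(\bar p_C)_C$, and verify the two triangle identities, both being immediate stalkwise computations.

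There is no deep obstacle here; the only point requiring care — and the place where the standing hypothesis that $\mathscr{C}$ covers $X$ is implicitly used, since otherwise $\cosk_{\mathscr{C}}$ is not even defined — is the bookkeeping of the equivalence relation on $E(\mathcal{G}_\mathscr{C},\iota_\mathscr{C})$: one must see that a section of $p$ over $U$ that restricts, along each $C$, to a section of $\mathcal{G}_C$ is precisely the datum of a family of germ-level maps compatible with the $\iota_{C,C'}$. Once this translation is made, the inverse bijection and the triangle identities drop out formally.
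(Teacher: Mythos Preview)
Your argument is correct and follows essentially the same approach as the paper: both construct the bijection explicitly, with your $\Psi$ being precisely the paper's map $\adj$, and you additionally spell out the inverse and the unit/counit description that the paper leaves as a ``routine check''. Note one cosmetic slip: the displayed arrow for $\Phi$ points from $\Hom_{\Sk}$ to $\Hom_{\Sh}$, but in the text you apply $\Phi$ to a morphism $g\colon\mathcal{F}\to\cosk_{\mathscr{C}}(\mathcal{G}_\mathscr{C},\iota_\mathscr{C})$, so the labels of $\Phi$ and $\Psi$ are effectively swapped relative to the display.
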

\begin{proof}
Let $\mathcal{F}\in \Sh_X$ and let  $(\mathcal{G}_\mathscr{C}, \iota_{\mathscr{C}})\in \Sk_{X}(\mathscr{C})$. 
We have to define a natural bijection
$$
\xymatrix{
\Hom_{\Sk_{X}(\mathscr{C}) }(\sk_{\mathscr{C}}\mathcal{F}, (\mathcal{G}_\mathscr{C}, \iota_{\mathscr{C}}))\ar[r]    & 
\Hom_{\Sh_{X}}(\mathcal{F}, \cosk_{\mathscr{C}}(\mathcal{G}_\mathscr{C}, \iota_{\mathscr{C}}))
}
$$
A morphism $f:= (f_C :\mathcal{F}_{|C}\longrightarrow \mathcal{G}_C)_{C\in \mathscr{C}}$ in the left-hand side induces a well-defined map
$$
\xymatrix{
E(\sk_{\mathscr{C}}\mathcal{F}) \ar[r] & E(\mathcal{G}_\mathscr{C}, \iota_{\mathscr{C}})
}
$$
associating to the class of  $(C, x, s\in \mathcal{F}_x)$ the class of  $(C, x, f_{C,x}(s)\in \mathcal{G}_{C,x})$.
Let $U$ be an open in $X$. A section $s\in \Gamma(U, \mathcal{F})$ induces a section to $E(\sk_{\mathscr{C}}\mathcal{F}) \longrightarrow X$ over $U$, from which we deduce a section $\adj(f)(s)$ of $E(\mathcal{G}_\mathscr{C}, \iota_{\mathscr{C}})\longrightarrow X$ over $U$. For $C\in \mathscr{C}$, the section $\adj(f)(s)$ is induced on $U\cap C$ by $f_C(s_{|U\cap C})\in \Gamma(U\cap C, \mathcal{G}_C)$. Hence, $\adj(f)(s)\in \Gamma(U,  \cosk_{\mathscr{C}}(\mathcal{G}_\mathscr{C}, \iota_{\mathscr{C}}))$. We have thus constructed a morphism of sheaves
$$
\adj(f) : \mathcal{F}\longrightarrow \cosk_{\mathscr{C}}(\mathcal{G}_\mathscr{C}, \iota_{\mathscr{C}})
$$
such that the following diagram commutes
$$
\xymatrix{
\mathcal{F}_{|C}   \ar[rr]^-{\adj(f)_{|C}} \ar[rrd]_-{f_C}  & &  \cosk_{\mathscr{C}}(\mathcal{G}_\mathscr{C}, \iota_{\mathscr{C}})_{|C}    \ar[d]     \\
        &         &   \mathcal{G}_C
}
$$
where the vertical morphism sends a germ of section at $x\in C$ to its unique representative in $\mathcal{G}_{C, x}$. Thus, $\adj : \Hom_{\Sk_{X}(\mathscr{C}) }(\sk_{\mathscr{C}}\mathcal{F}, (\mathcal{G}_\mathscr{C}, \iota_{\mathscr{C}}))\longrightarrow
\Hom_{\Sh_{X}}(\mathcal{F}, \cosk_{\mathscr{C}}(\mathcal{G}_\mathscr{C}, \iota_{\mathscr{C}}))$ is well-defined and injective.
It is a routine check to see that  $\adj$ is surjective.
\end{proof}

\subsection{Torsor and skeleton}\label{torsorandske}
Let $\mathcal{G}$ be a sheaf of groups on $X$. The canonical morphism 
\begin{equation}\label{caninjectG}
 \mathcal{G}\longrightarrow \cosk_{\mathscr{C}} \sk_{\mathscr{C}} \mathcal{G}
\end{equation} 
  is injective and we suppose from now on that it is also surjective.
\begin{definition}
A $\mathcal{G}$-skeleton torsor is the data of an object $(\mathcal{F}_\mathscr{C}, \iota_\mathscr{C})\in \Sk_{X}(\mathscr{C})$ such that for every $C\in \mathscr{C}$, the sheaf $\mathcal{F}_C$ is a $\mathcal{G}_{|C}$-torsor such that for every $C, C^{\prime}\in \mathscr{C}$ and for every $x\in C \cap C^{\prime}$, the following diagram commutes
\begin{equation}\label{compatcarré}
\xymatrixcolsep{6pc}\xymatrix{
 \mathcal{G}_{|C, x}\times \mathcal{F}_{C, x}\ar[d] \ar[r]^-{\iota_{C, C^{\prime}, \mathcal{G}}(x) \times\iota_{C, C^{\prime}}(x)}  &  \mathcal{G}_{|C^{\prime}, x}\times \mathcal{F}_{C^{\prime}, x} \ar[d]  \\
   \mathcal{F}_{C, x}  \ar[r]_-{\iota_{C, C^{\prime}}(x)}      &      \mathcal{F}_{C^{\prime}, x} 
}
\end{equation}
where $\iota_{C, C^{\prime}, \mathcal{G}}(x)$ is the composite of the canonical morphisms  $\mathcal{G}_{|C, x}\overset{\sim}{\longrightarrow} \mathcal{G}_{ x}\overset{\sim}{\longleftarrow}\mathcal{G}_{|C^{\prime}, x}$. We denote by $\mathcal{G}$-$\Sk_{\mathscr{C}}$ the category of $\mathcal{G}$-skeleton torsors on $X$ with respect to $\mathscr{C}$. 
\end{definition}
Let $(\mathcal{F}_\mathscr{C}, \iota_\mathscr{C})$ be a $\mathcal{G}$-skeleton torsors. The morphism \eqref{caninjectG} and the compatibilities \eqref{compatcarré} show that  $\cosk_\mathscr{C}(\mathcal{F}_\mathscr{C}, \iota_\mathscr{C})$ is endowed 
with an action of $\mathcal{G}$. Let $U$ be an open of $X$ and let $s, t\in \Gamma(U, \cosk_\mathscr{C}(\mathcal{F}_\mathscr{C}, \iota_\mathscr{C}))$. For every $C\in \mathscr{C}$ meeting $U$, the sections $s$ and $t$ are induced on $C$ by $s_C, t_C\in \Gamma(U\cap C, \mathcal{F}_C)$. Since $\mathcal{F}_C$ is a $\mathcal{G}_{|C}$-torsor, there exists a unique $g_C\in \Gamma(U\cap C,\mathcal{G})$ such that $t_C= g_C s_C$. From \eqref{compatcarré}, we see that the $(g_C)_{C\in \mathscr{C}}$ define a section of $\cosk_\mathscr{C}\mathcal{G}$ over $U$. Since \eqref{caninjectG} is supposed to be an isomorphism, we deduce $t=gs$ for a unique $g\in \Gamma(U,\mathcal{G})$. Hence,  $\cosk_\mathscr{C}(\mathcal{F}_\mathscr{C}, \iota_\mathscr{C})$  is a \textit{pseudo-torsor}. It may not be a torsor in general.

\subsection{Stokes Torsors and skeletons}\label{STandSk}

For $\theta=(\theta_{1}, \dots, \theta_{m})\in [0, 2\pi [^m$ and $\nu=(\nu_1, \dots, \nu_m)\in (\mathds{N^{\ast}})^{m}$, we define $C_{\theta, \nu}$ as the curve  of $\mathds{C}^n$ defined by 
$$
t\longrightarrow (e^{i\theta_1} t^{\nu_1}, \dots, e^{i\theta_m} t^{\nu_m}, 0)
$$
It gives rise to a curve $\partial \tilde{C}_{\theta, \nu}\simeq S^1$ on $\mathds{T}$ explicitly given by 
$$
\theta\longrightarrow (\theta_{1}+ \nu_1 \theta, \dots ,\theta_{m}+ \nu_m \theta)
$$
From this point on, we apply the previous formalism to
$$
 \left\{
    \begin{array}{ll}
        X=\mathds{T}              \\   
         \mathscr{C}=\{\partial \tilde{C}_{\theta, \nu},\theta\in [0, 2\pi [^m, \nu\in (\mathds{N^{\ast}})^{m} \}              \\
       \mathcal{G}=\St_{\mathcal{N}}(R)
    \end{array}
\right.
$$
where $R\in \mathds{C}$-alg. 
\begin{remarque}
Proposition \ref{autoId} shows that the category $\St_{\mathcal{N}}(R)\text{-}\Sk_{\mathscr{C}}$ is a setoïd, that is a groupoïd whose objects have exactly one automorphism. We still denote by $\St_{\mathcal{N}}(R)\text{-}\Sk_{\mathscr{C}}$  the set of isomorphism classes of objects in this category.
\end{remarque}
As explained in \ref{torsorandske}, the coskeleton of a $\St_{\mathcal{N}}(R)$-skeleton torsor is a $\St_{\mathcal{N}}(R)$-pseudo torsor due to the following 
\begin{lemme}
The canonical morphism  
\begin{equation}\label{canmorphism}
\St_{\mathcal{N}}(R)\longrightarrow \cosk_{\mathscr{C}} \sk_{\mathscr{C}} \St_{\mathcal{N}}(R)
\end{equation}
is an isomorphism.
\end{lemme}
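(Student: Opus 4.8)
The plan is as follows. Injectivity of \eqref{canmorphism} is clear: $\mathscr{C}$ covers $\mathds{T}$ — every $x=(\psi_1,\dots,\psi_m)\in\mathds{T}$ lies on $\partial\tilde{C}_{\theta,\nu}$ for $\theta=(\psi_1,\dots,\psi_m)$ and $\nu=(1,\dots,1)$ — so a section of $\St_{\mathcal{N}}(R)$ is determined by its restrictions to the curves of $\mathscr{C}$. The real content is surjectivity, and since the $\mathcal{I}$-good open sets form a basis of the topology of $\mathds{T}$ (see \ref{Igoodopen}) and are products of strict open intervals, it suffices to work over such an open $\mathcal{S}$. Unwinding the definitions of $\sk_{\mathscr{C}}$ and $\cosk_{\mathscr{C}}$, a section of $\cosk_{\mathscr{C}}\sk_{\mathscr{C}}\St_{\mathcal{N}}(R)$ over $\mathcal{S}$ amounts to a family $(g_C)_{C\in\mathscr{C}}$ with $g_C\in\Gamma(C\cap\mathcal{S},\St_{\mathcal{N}_C}(R))$ whose germs agree at each point of $\mathcal{S}$ lying on two curves of $\mathscr{C}$ (the transition maps of $\sk_{\mathscr{C}}\St_{\mathcal{N}}(R)$ being identities); one must produce a $g\in\Gamma(\mathcal{S},\St_{\mathcal{N}}(R))$ inducing each $g_C$.

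I would do this in two steps. First, glue the $g_C$ inside the ambient sheaf $\mathcal{L}:=R\otimes_{\mathds{C}}(j_{D\ast}\mathcal{H}^{0}\DR\End\mathcal{N})_{|\mathds{T}}$, of which $\St_{\mathcal{N}}(R)$ is a subsheaf. By \eqref{inj} and \eqref{sortleR} the sheaf $\mathcal{L}$ is constant with fibre $R\otimes_{\mathds{C}}\End E$ on the product $\mathcal{S}$ of strict open intervals, and on each curve $C$ the identification $\tilde{\iota}^{-1}\St_{\mathcal{N}}(R)\simeq\St_{\mathcal{N}_C}(R)$ is compatible with these trivialisations. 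Hence on each connected component $I$ of $C\cap\mathcal{S}$, $g_C$ is a constant section $\Id+f_I$ with $f_I\in R\otimes_{\mathds{C}}\End E$, and the compatibility of germs says exactly that $f_I=f_{I'}$ whenever two such arcs meet. An elementary connectedness argument then forces all $f_I$ to equal a single $f$: using that signed integer combinations of the all-positive directions $(1,\dots,1)$ and $(1,\dots,1)+e_k$, $k=1,\dots,m$, span $\mathds{R}^m$, any two points of the box $\mathcal{S}$ are joined inside $\mathcal{S}$ by a finite chain of overlapping arcs $\partial\tilde{C}_{\theta,\nu}\cap\mathcal{S}$. This produces $g:=\Id+f\in\Gamma(\mathcal{S},\mathcal{L})$ with $g_{|C\cap\mathcal{S}}=g_C$ for every $C$.

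Second, I would verify that $g$ lies in the subsheaf $\St_{\mathcal{N}}(R)$, that is, by \eqref{tri}, that $p_{a}fi_{b}=0$ whenever $a\nless_{\mathcal{S}}b$. Fixing such $a,b\in\mathcal{I}$, pick $y_0\in\mathcal{S}$ with $F_{a,b}(y_0)\geq0$ and a curve $C\in\mathscr{C}$ through $y_0$; on a small enough arc $I'\subset C\cap\mathcal{S}$ around $y_0$ the function $F_{a,b}$ is not everywhere negative — by continuity if $F_{a,b}(y_0)>0$, and trivially if $F_{a,b}(y_0)=0$ — so $a\nless_{I'}b$. Since $g_{|I'}=(g_C)_{|I'}$ is a section of $\St_{\mathcal{N}}(R)_{|I'}$ and $I'\subset\mathcal{S}$ is contained in a product of strict open intervals, \eqref{tri} applied to $I'$ gives $p_{a}fi_{b}=0$. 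Ranging over all pairs with $a\nless_{\mathcal{S}}b$ yields $g\in\Gamma(\mathcal{S},\St_{\mathcal{N}}(R))$, which establishes surjectivity, hence the isomorphism.

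I expect the main obstacle to be conceptual rather than computational: seeing that the triangularity condition defining $\St_{\mathcal{N}}(R)_{|\mathcal{S}}$, though phrased as a constraint over all of $\mathcal{S}$, is detected arc by arc — this works precisely because $a<_{\mathcal{S}}b$ means $F_{a,b}<0$ at every point of $\mathcal{S}$, so it is equivalent to the conjunction of the conditions $a<_{I'}b$ over arcs $I'$ cut out by curves of $\mathscr{C}$ and covering $\mathcal{S}$. The one point requiring care in the gluing step is that the curves of $\mathscr{C}$ have tangent direction at $0$ with all coordinates positive, which is why the connectedness of $\mathcal{S}$ by chains of such arcs has to be noted explicitly.
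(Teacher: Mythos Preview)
Your argument is correct and follows essentially the same route as the paper. The paper packages both of your steps into a single commutative diagram: a section of $\cosk_{\mathscr{C}}\sk_{\mathscr{C}}\St_{\mathcal{N}}(R)$ over an $\mathcal{I}$-good $\mathcal{S}$ gives a function $\mathcal{S}\to\GL_r(R)$ whose value at each $x$ lands in the image of $\St_{\mathcal{N}}(R)_x$; constancy (your Step~1, via the same chain-of-arcs argument) then forces the common value to lie in $\bigcap_{x\in\mathcal{S}}\St_{\mathcal{N}}(R)_x=\Gamma(\mathcal{S},\St_{\mathcal{N}}(R))$, which is exactly your Step~2 phrased pointwise rather than pair-by-pair.
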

\begin{proof}
It is enough to show surjectivity over an $\mathcal{I}$-good open set $\mathcal{S}$ for $x\in \mathds{T}$. As explained in \ref{localstructure}, a choice of fundamental matrix for the $\mathcal{R}_a, a\in \mathcal{I}$ on $\mathcal{S}$ induces a commutative diagram
$$
\xymatrix{
   \Gamma(\mathcal{S},\St_{\mathcal{N}}(R) ) \ar[d] \ar[r] &  \bigsqcup_{x\in \mathcal{S}} \St_{\mathcal{N}}(R)_x  \ar[d]&  \Gamma(\mathcal{S},\cosk_\mathscr{C} \sk_\mathscr{C}\St_{\mathcal{N}}(R) )  \ar[l] \ar[ld] \\     
   \GL_r(R)  \ar[r] &  \GL_r(R)^{\mathcal{S}}& 
}
$$
with injective arrows and where the bottom arrow associates to $g\in \GL_r(R)$  the function constant  to $g$ on $\mathds{T}$. Hence, we have to show that the function $\mathcal{S}\longrightarrow \GL_r(R)$ induced by a section of $\cosk_\mathscr{C} \sk_\mathscr{C}\St_{\mathcal{N}}(R)$ over $\mathcal{S}$ is constant. Since it is constant over every admissible curve and since two points in $\mathcal{S}$ can always been connected by intervals lying on admissible curves, we are done.
\end{proof}
Let $\mathcal{S}$ be an $\mathcal{I}$-good open set of $\mathds{T}$. 
A $\mathscr{C}$-polygon of $\mathcal{S}$ is the image by the canonical surjection  $\pi:\mathds{R}^{m}\longrightarrow \mathds{T}$ of a convex polygon $P\subset \mathds{R}^{m}$ with at least 3 edges  such that $\pi(P)\subset \mathcal{S}$    and the image of an edge $E$ lies  on a curve $C(E)\in \mathscr{C}$. We introduce the following
\begin{definition}\label{admissible}
A $\St_{\mathcal{N}}(R)$-skeleton torsor $(\mathcal{T_{\mathscr{C}},\iota_{\mathscr{C}} })\in  \St_{\mathcal{N}}(R)\text{-}\Sk_{\mathscr{C}}$ is said to be \textit{admissible} if for every $\mathcal{I}$-good open set $\mathcal{S}$, every $\mathscr{C}$-polygon 
$\pi : P\longrightarrow \mathcal{S}$ with edge $E_i=[x_i, x_{i+1}]$,  $i\in \mathds{Z}/N\mathds{Z}$,  every $t_i\in \Gamma(\pi(E_i), \mathcal{T}_{C(E_i)})$, $i=1, \dots, N-1$ such that
$$
t_i(\pi(x_{i+1}))=t_{i+1}(\pi(x_{i+1})) \text{  in } E(\mathcal{T}_\mathscr{C}, \iota_{\mathscr{C}})
$$
there exists a (necessarily unique) $t_N\in  \Gamma(\pi(E_N), \mathcal{T}_{C(E_N)})$ satisfying
$$
t_N(\pi(x_N))=t_{N-1}(\pi(x_N)) \text{ and }  t_N(\pi(x_1))=t_{1}(\pi(x_1))  \text{  in } E(\mathcal{T}_\mathscr{C}, \iota_{\mathscr{C}})
$$
\end{definition}
Again, the category $\St_{\mathcal{N}}(R)\text{-}\AdSk_{\mathscr{C}}$ is a setoïd. We still denote by $\St_{\mathcal{N}}(R)\text{-}\AdSk_{\mathscr{C}}$  the set of isomorphism classes of objects in this category.
\subsection{Proof of Theorem \ref{equcat}}
If we prove that the skeleton of a Stokes-torsor is admissible, and that the coskeleton of an admissible Stokes-skeleton is a torsor, we are done since the adjunction maps provided by \ref{adjoint} will automatically be isomorphisms. \\ \indent
Let $\mathcal{T}\in H^{1}(\mathds{T}, \St_{\mathcal{N}}(R))$. Let $\mathcal{S}$ be an $\mathcal{I}$-good open set, let $\pi : P\longrightarrow \mathcal{S}$ be a $\mathscr{C}$-polygon, and let $t$ be a section of $\mathcal{T}$ on $\pi(P\setminus ]x_N, x_1[)$. From \ref{crittri}, $\mathcal{T}$  admits a section $s$ on $\mathcal{S}$. Hence, there exists a section $g$ of $ \St_{\mathcal{N}}(R)$ over  $\pi(P\setminus ]x_N, x_1[)$ such that $t=g s$ and we are left to prove that $g$ extends to $\pi(P)$. For $a,b\in \mathcal{I}$ with $ \mathcal{H}_{a,b}$ meeting $\mathcal{S}$, we have to show that 
$$
a<_{\pi(P\setminus ]x_N, x_1[)}b   \Longrightarrow   a <_{\pi(]x_N, x_1[)} b
$$
By $\mathcal{I}$-goodness, $\mathcal{S}\setminus (\mathcal{S}\cap \mathcal{H}_{a,b})$ has only two connected components $C^{\pm}_{ab}$. They are convex. If $\pi(P\setminus ]x_N, x_1[) \subset C^{-}_{ab}$, so does the segment $\pi( [x_N, x_1])$ by convexity and we are done.\\ \indent
Let $(\mathcal{T_{\mathscr{C}},\iota_{\mathscr{C}} })\in  \St_{\mathcal{N}}(R)\text{-}\AdSk_{\mathscr{C}}$ and let us prove that $\cosk_{\mathscr{C}}(\mathcal{T_{\mathscr{C}},\iota_{\mathscr{C}} })$ is a torsor under $\St_{\mathcal{N}}(R)$.
For $x\in \mathds{T}$, we have to prove that the germ of $\cosk_{\mathscr{C}}(\mathcal{T_{\mathscr{C}},\iota_{\mathscr{C}} })$ at $x$ is not empty. Let 
$$
\mathcal{S}(x, \epsilon):=\prod_{i=1}^{m} ]x_i-\epsilon, x_i + \epsilon [
$$ 
be an $\mathcal{I}$-good open set at $x$, choose $s\in E(\mathcal{T}_{\mathscr{C}})$ above $x$.
Set 
$$
\mathcal{S}(x, \epsilon)^- :=\prod_{i=1}^{m} ]x_i-\epsilon, x_i  [\text{ and }  \mathcal{S}(x, \epsilon)^+ :=\prod_{i=1}^{m} ]x_i, x_i + \epsilon [
$$
We are going to "transport" the germ $s$ in two steps to the open $\mathcal{S}(x, \epsilon/2)$. We first transport it to $\mathcal{S}(x, \epsilon)^+$ and $\mathcal{S}(x, \epsilon)^-$.\\ \indent
For $C\in \mathscr{C}(x)$, let $C(x)$ be the connected component of $C\cap \mathcal{S}(x, \epsilon)$ containing $x$. By admissibility, the set $\mathcal{H}_{a,b}\cap C(x)$ is either empty or reduced to $\{x\}$. Hence, the restriction  of  $\mathcal{T}_C$ to $C(x)$ contains at most one Stokes line, which is $\{x\}$ if there is one. Lemma \ref{crittri} thus shows that the unique representative of $s$ in $\mathcal{T}_{C, x}$ extends uniquely to a section $s_{C(x)}\in \Gamma(C(x), \mathcal{T}_C)$. Two distinct $C(x)$ and $C^{\prime}(x)$ meet only at $x$, so we have a well-defined section of $E(\mathcal{T}_{\mathscr{C}}, \iota_{\mathscr{C}})\longrightarrow \mathds{T}$ above
$$
\mathcal{S}(x, \epsilon)^+ \times \{x\}\times  \mathcal{S}(x, \epsilon)^-
$$
that we still denote by $s$, noting $s(y)$ its value at a point $y$. 
We now extend $s$ to $\mathcal{S}(x, \epsilon/2)$ using the admissibility condition. Let $y \in \mathcal{S}(x, \epsilon/2)$. If $\mathbf{1}$ denotes the $m$-uple $(1, \dots, 1)$, we can choose $y_{\pm}\in \partial \tilde{C}_{y,\mathbf{1}}\cap \mathcal{S}(x, \epsilon)^{\pm}$. 
Admissibility applied to the triangle $y_-x y_+$ and the sections $s_{[y_- , x]}, s_{[x , y_+]}$ gives a unique $t_{ y_-, y_+}\in \Gamma([y_- , y_+], \mathcal{T}_{\partial\tilde{C}_{y,\mathbf{1}}})$ such that 
$$
t_{ y_-, y_+}(y_-)=s(y_-) \text{ and } t_{ y_-, y_+}(y_+)=s(y_+) \text{ in } E(\mathcal{T}_\mathscr{C}, \iota_{\mathscr{C}})
$$
If $y_+^{\prime} \in  \partial \tilde{C}_{y,\mathbf{1}}\cap \mathcal{S}(x, \epsilon)^{+}$ is another choice, the sections $t_{ y_-, y_+}$ and $t_{ y_-, y_+^{\prime}}$
coincide at $y_-$. Hence they coincide on $[y_{-}, y_+] \cap [y_{-}, y_+^{\prime}]$. Arguing similarly with $y_-$, we deduce that $t_{ y_-, y_+}$ does not depend on the choice of $y_{+}$ and $y_-$. We have thus constructed a section 
$$
t_y\in \Gamma(\partial \tilde{C}_{y,\mathbf{1}}\cap \mathcal{S}(x, \epsilon), \mathcal{T}_{\partial\tilde{C}_{y,\mathbf{1}}})
$$
We now show that $y\longrightarrow t_y(y)$ defines a section of
$\cosk_{\mathscr{C}}(\mathcal{T_{\mathscr{C}},\iota_{\mathscr{C}} })$ over $\mathcal{S}(x, \epsilon/2)$.
Let $C\in \mathscr{C}$ and let $y_0\in C\cap \mathcal{S}(x, \epsilon/2)$. It is enough to show that $y\longrightarrow t_y(y)$ is induced by a section of $\mathcal{T}_C$  on a small enough interval of $C$ contained in $\mathcal{S}(x, \epsilon/2)$ and containing $y_0$. One can choose such a non trivial interval $[y_1, y_2]$ in a way that it admits a translate $[y_{1+}, y_{2+}]$ contained in $\mathcal{S}(x, \epsilon/2)^+$ with $y_i^+\in \partial\tilde{C}_{y_i,\mathbf{1}}$ for $i=1, 2$. The situation can be depicted as follows:

\begin{figure}[h]
\begin{center}
\includegraphics[height=2.5in,width=3.0in,angle=0]{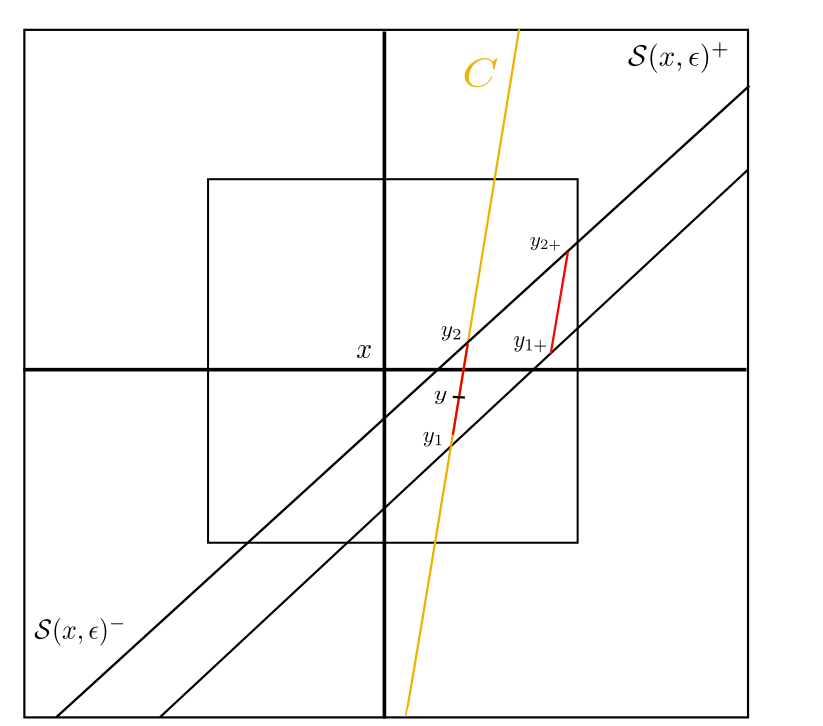}
\end{center}
\end{figure}

In particular, $[y_{1+}, y_{2+}]$ is an  interval of a translate 
$C^+$ of $C$. Since  $\mathscr{C}$ is stable by 
translation, we have $C^+ \in \mathscr{C}$. Admissibility 
applied to the triangle $y_{1+}x y_{2+}$ and the sections 
$s_{[y_{1+}, x]}, s_{[y_{2+}, x]}$ gives a unique $s_{y_{1+}, 
y_{2+}}\in \Gamma([y_{1+}, y_{2+}], \mathcal{T}_{C^+})$ such 
that 
$$
s_{y_{1+}, 
y_{2+}}(y_{1+})=s(y_{1+}) \text{ and } s_{y_{2+}, 
y_{2+}}(y_{2+})=s(y_{2+})  \text{ in } E(\mathcal{T}_\mathscr{C}, \iota_{\mathscr{C}})
$$
Note that $s_{y_{1+}, 
y_{2+}}(y)=s(y)$ for every $y\in [y_{1+}, 
y_{2+}]$. Indeed, we know by admissibility applied to $yxy_{2+}$ that there exists a unique section $s_{y,y_{2+}}\in \Gamma([y_1, y_{2+}], \mathcal{T}_{C^+})$ such that 
$$
s_{y, 
y_{2+}}(y)=s(y)  \text{ and } s_{y, 
y_{2+}}(y_{2+})=s(y_{2+})  \text{ in } E(\mathcal{T}_\mathscr{C}, \iota_{\mathscr{C}})
$$
Since $s_{y_{1+}, 
y_{2+}}$ and $s_{y, y_{2+}}$ coincide at $y_{2+}$, they are equal on $[y,y_{2+}]$, so 
$$
s_{y_{1+}, 
y_{2+}}(y)=s_{y, 
y_{2+}}(y)=s(y)     \text{ in } E(\mathcal{T}_\mathscr{C}, \iota_{\mathscr{C}})
$$
By admissibility applied to the parallelogram $y_1 y_{1+}y_{2+}y_2$ and the sections $t_{y_1|[y_1,  y_{1+}]}$, $s_{y_{1+}, y_{2+}}$ and $t_{y_2|[y_2,  y_{2+}]}$, there exists a unique section $t\in \Gamma([y_1, y_2], \mathcal{T}_C)$ such that 
$$
t(y_1)=t_{y_1}(y_1) \text{ and } t(y_2)=t_{y_2}(y_2)    \text{ in } E(\mathcal{T}_\mathscr{C}, \iota_{\mathscr{C}})
$$
We are left to show that  $t$ coincides with $y\longrightarrow t_y(y)$ on $[y_1, y_2]$. Let $y\in [y_1, y_2]$. The line $\partial\tilde{C}_{y,\mathbf{1}}$ and the segment $[y_{1+}, y_{2+}]$ meet at a point $y_+$. Admissibility applied to the parallelogram $yy_2y_{2+}y_+$ and the sections $t_{y|[y, y_+]}, t_{y_2|[y_2, y_{2+}]}, s_{y_{1+}, y_{2+}|[y_{+}, y_{2+}]}$ gives a section $t^{\prime}\in \Gamma([y, y_2], \mathcal{T}_C)$ such that 
$$
t^{\prime}(y)=t_{y}(y) \text{ and } t^{\prime}(y_2)=t_{y_2}(y_2)    \text{ in } E(\mathcal{T}_\mathscr{C}, \iota_{\mathscr{C}})
$$
Since $t^{\prime}$ and $t$ are two sections of $\mathcal{T}_{C}$ coinciding at $y_{2}$, they are equal on $[y, y_2]$. Hence
$$
t(y)=t^{\prime}(y)=t_{y}(y)  \text{ in } E(\mathcal{T}_\mathscr{C}, \iota_{\mathscr{C}})
$$
From the construction of $y\longrightarrow t_y(y)$ out of $s$, we deduce that 
\begin{equation}\label{is}
\xymatrix{
\Gamma(\mathcal{S}(x, \epsilon/2), \cosk_{\mathscr{C}}(\mathcal{T_{\mathscr{C}},\iota_{\mathscr{C}} }))\ar[r] & E(\mathcal{T_{\mathscr{C}},\iota_{\mathscr{C}} })_x
}
\end{equation}
is surjective. In particular, the left-hand side of \eqref{is} is not empty. Since the sheaf $\cosk_{\mathscr{C}}(\mathcal{T_{\mathscr{C}},\iota_{\mathscr{C}} })$ is a pseudo-torsor, \eqref{is} is also injective. Taking the colimit over $\epsilon$ gives an identification 
$$
\xymatrix{
\cosk_{\mathscr{C}}(\mathcal{T_{\mathscr{C}},\iota_{\mathscr{C}} })_x\ar[r]^-{\sim} & E(\mathcal{T_{\mathscr{C}},\iota_{\mathscr{C}} })_x
}
$$
and Theorem \ref{equcat} is proved. \\ \indent

As a corollary of Theorem \ref{equcat}, we see that admissibility is stable under base change. This is not clear a priori if one considers only a  subset of $\mathscr{C}$. Hence, the assignment $R\longrightarrow  \St_{\mathcal{N}}(R)\text{-}\AdSk_{\mathscr{C}}$ is a well-defined functor.
As a direct consequence of \ref{crittri}, we have the following
\begin{corollaire}\label{trivialsurI}
Let $(\mathcal{T}_\mathscr{C}, \iota_{\mathscr{C}})\in \St_{\mathcal{N}}(R)\text{-}\AdSk_{\mathscr{C}}$. For every $C\in \mathscr{C}$ and every interval $I\subset C$ contained in an $\mathcal{I}$-good open set, the torsor $\mathcal{T}_C$ is trivial on $I$.
\end{corollaire}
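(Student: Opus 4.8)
The plan is to reduce Corollary \ref{trivialsurI} to Lemma \ref{crittri} by means of Theorem \ref{equcat}. The key point is that an admissible skeleton is not merely a family of torsors on the curves of $\mathscr{C}$ glued along their intersection points, but is genuinely the $\mathscr{C}$-skeleton of a single Stokes torsor on the torus $\mathds{T}$; triviality on an interval then becomes the restriction to that interval of the triviality over an $\mathcal{I}$-good open set of $\mathds{T}$ already supplied by \ref{crittri}.

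Concretely, the first step is to apply Theorem \ref{equcat} to $(\mathcal{T}_\mathscr{C}, \iota_{\mathscr{C}}) \in \St_{\mathcal{N}}(R)\text{-}\AdSk_{\mathscr{C}}$: setting $\mathcal{T} := \cosk_{\mathscr{C}}(\mathcal{T}_\mathscr{C}, \iota_{\mathscr{C}})$, that theorem provides $\mathcal{T} \in H^{1}(\mathds{T}, \St_{\mathcal{N}}(R))$ together with an isomorphism $\sk_{\mathscr{C}} \mathcal{T} \simeq (\mathcal{T}_\mathscr{C}, \iota_{\mathscr{C}})$, the adjunction maps of Lemma \ref{adjoint} being isomorphisms as in the proof of \ref{equcat}. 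In particular, for every $C \in \mathscr{C}$ one obtains an isomorphism of $\St_{\mathcal{N}}(R)_{|C}$-torsors $\mathcal{T}_C \simeq \mathcal{T}_{|C}$.

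The second step is to invoke Lemma \ref{crittri} for the Stokes torsor $\mathcal{T}$ on $\mathds{T}$ itself. Let $C \in \mathscr{C}$ and let $I \subset C$ be an interval contained in an $\mathcal{I}$-good open set $\mathcal{S}$ of $\mathds{T}$, as in the hypothesis. Then \ref{crittri} yields that $\mathcal{T}_{|\mathcal{S}}$ is trivial. Since $I \subset \mathcal{S} \cap C$, restricting the isomorphism of the first step along $I \hookrightarrow C$ gives $(\mathcal{T}_C)_{|I} \simeq \mathcal{T}_{|I} = (\mathcal{T}_{|\mathcal{S}})_{|I}$, which is trivial, being the restriction of a trivial torsor. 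This finishes the proof.

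There is no genuine obstacle here beyond bookkeeping: the whole content already sits in Lemma \ref{crittri}, which handles arbitrary $\mathcal{I}$-good open sets of $\mathds{T}$ in every dimension, and in Theorem \ref{equcat}. The only point worth underlining is that admissibility is really used, through \ref{equcat}: for a non-admissible skeleton the local piece $\mathcal{T}_C$ need not extend to any global torsor and the argument collapses. One could instead try to work directly on $\partial\tilde{C} \simeq S^{1}$ with the one-dimensional case of \ref{crittri} applied to the restriction of $\mathcal{N}$ to the curve, but one would then have to check that $I$ lies in an $\iota^{\ast}\mathcal{I}$-good open subset of $\partial\tilde{C}$; this holds because $\partial\tilde{C}_{\theta, \nu}$ lifts to a straight line in $\mathds{R}^{m}$ and so meets each hyperplane $\mathcal{H}_{a,b}(l)$ at most once inside the convex lift $U(\mathcal{S})$ of $\mathcal{S}$, so routing through Theorem \ref{equcat} is simply cleaner.
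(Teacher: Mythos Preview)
Your argument is correct and is exactly the paper's intended one: the corollary is placed immediately after Theorem \ref{equcat} and declared ``a direct consequence of \ref{crittri}'', i.e., pass from the admissible skeleton to a global torsor on $\mathds{T}$ via \ref{equcat}, trivialize on the ambient $\mathcal{I}$-good open set by \ref{crittri}, and restrict to $I\subset C$. One small caveat about your closing aside: the fact that the lifted line meets each individual $\mathcal{H}_{a,b}(l)$ at most once in $U(\mathcal{S})$ does not by itself place $I$ inside an $\iota^{\ast}\mathcal{I}$-good interval of $\partial\tilde{C}$, since for distinct pairs $(a,b)$ the intersection points on $I$ may differ and then no single point of $I$ sees all the Stokes lines; so the purely one-dimensional route would need an additional step, which confirms that routing through \ref{equcat} is indeed the clean choice.
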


\subsection{A monomorphism into an affine scheme}\label{mono}
One can find a finite family $\mathscr{C}(\mathcal{I})\subset \mathscr{C}$ for which there exists a finite family $\mathscr{P}$ of  closed parallelotops covering $\mathds{T}$ such that for every $\mathcal{P}\in \mathscr{P}$, the following holds
\begin{enumerate}
\item $\mathcal{P}$ is contained in an open which is $\mathcal{I}$-good for  its center $x_\mathcal{P}$.
\item every edge of $\mathcal{P}$ is contained in a curve of $\mathscr{C}(\mathcal{I})$. 
\item for $\mathcal{P}_1, \mathcal{P}_2 \in  \mathscr{P}$, there exists  $x_{12}\in \mathcal{P}_1\cap  \mathcal{P}_2$, $x_1 \in \mathcal{P}_1$ and $x_2\in \mathcal{P}_2$ such that $[x_{\mathcal{P}_i}, x_i]$ and $[x_i, x_{12}]$, $i=1, 2$ lie on curves of $\mathscr{C}(\mathcal{I})$.
\end{enumerate}
Condition $(3)$ will be used in section \ref{prooftheorem} only.
\begin{lemme}\label{famillespecial}
For every $R\in \mathds{C}$-alg, the map
\begin{equation}\label{injfamillespeciale}
\xymatrix{
H^{1}(\mathds{T}, \St_{\mathcal{N}}(R)) \ar[r] & \St_{\mathcal{N}}(R)\text{-}\Sk_{\mathscr{C}(\mathcal{I})}
}
\end{equation}
is injective.
\end{lemme}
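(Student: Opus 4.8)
The plan is to promote an isomorphism of $\mathscr{C}(\mathcal{I})$-skeletons to an isomorphism of the corresponding torsors on $\mathds{T}$; since $H^{1}(\mathds{T},\St_{\mathcal{N}}(R))$ is the set of isomorphism classes of $\St_{\mathcal{N}}(R)$-torsors, this yields the injectivity of \eqref{injfamillespeciale}. So I take $\mathcal{T},\mathcal{T}^{\prime}\in H^{1}(\mathds{T},\St_{\mathcal{N}}(R))$ together with an isomorphism $\psi=(\psi_{C})_{C\in\mathscr{C}(\mathcal{I})}:\sk_{\mathscr{C}(\mathcal{I})}\mathcal{T}\longrightarrow\sk_{\mathscr{C}(\mathcal{I})}\mathcal{T}^{\prime}$. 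For a curve $C\in\mathscr{C}(\mathcal{I})$ and $x\in C$ one has canonically $\mathcal{T}_{C,x}=\mathcal{T}_{x}$, and the transition maps in the skeleton of an honest sheaf are the identities; hence the compatibility of $\psi$ at intersection points means that $\psi_{C,x}$ depends only on $x$. I write $\psi_{x}:\mathcal{T}_{x}\longrightarrow\mathcal{T}^{\prime}_{x}$ for this common map, defined at every point lying on a curve of $\mathscr{C}(\mathcal{I})$, in particular at every point of an edge of a parallelotop $\mathcal{P}\in\mathscr{P}$.

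The heart of the argument is a local construction, carried out on each $\mathcal{P}\in\mathscr{P}$. Let $\mathcal{S}=\mathcal{S}_{\mathcal{P}}$ be the open set, $\mathcal{I}$-good for the centre $x_{\mathcal{P}}$ of $\mathcal{P}$, furnished by condition (1). By Lemma \ref{crittri} both $\mathcal{T}$ and $\mathcal{T}^{\prime}$ are trivial on $\mathcal{S}$, so I fix sections $t\in\Gamma(\mathcal{S},\mathcal{T})$, $t^{\prime}\in\Gamma(\mathcal{S},\mathcal{T}^{\prime})$. For each edge $E$ of $\mathcal{P}$, which lies on a curve $C(E)\in\mathscr{C}(\mathcal{I})$ by condition (2), write $\psi_{C(E)}(t_{|E})=g_{E}\cdot t^{\prime}_{|E}$ with $g_{E}\in\Gamma(E,\St_{\mathcal{N}}(R))$; through the gauge identification of \ref{localstructure} (fundamental matrices for the $\mathcal{R}_{a}$ on $\mathcal{S}$, restricted to $E$), $g_{E}$ becomes an element of $R\otimes_{\mathds{C}}\End E$ with $p_{a}(g_{E}-\Id)i_{b}=0$ unless $a<_{E}b$. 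I claim all the $g_{E}$ coincide, with a common value $g\in R\otimes_{\mathds{C}}\End E$: if edges $E,E^{\prime}$ share a vertex $v$, then evaluating at $v$ and using $\psi_{C(E),v}=\psi_{v}=\psi_{C(E^{\prime}),v}$ gives $g_{E}\cdot t^{\prime}_{v}=\psi_{v}(t_{v})=g_{E^{\prime}}\cdot t^{\prime}_{v}$, whence $g_{E}=g_{E^{\prime}}$ because $\St_{\mathcal{N}}(R)_{v}\hookrightarrow R\otimes_{\mathds{C}}\End E$; the edge-graph of the parallelotop $\mathcal{P}$ being connected, the claim follows.

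The key point is then that $g$ already lies in $\Gamma(\mathcal{S},\St_{\mathcal{N}}(R))$, i.e.\ $p_{a}(g-\Id)i_{b}=0$ whenever $a\nless_{\mathcal{S}}b$. By the relations above it suffices to produce, for each such pair $(a,b)$, an edge $E$ of $\mathcal{P}$ with $a\nless_{E}b$. If $\mathcal{H}_{a,b}$ does not meet $\mathcal{S}$, then $F_{a,b}$ has constant---necessarily positive---sign on $\mathcal{S}$, and any edge works. If $\mathcal{H}_{a,b}$ meets $\mathcal{S}$, then $x_{\mathcal{P}}\in\mathcal{H}_{a,b}$ by $\mathcal{I}$-goodness, so inside the chart $U(\mathcal{S})\subset\mathds{R}^{m}$ the set $\mathcal{H}_{a,b}\cap\mathcal{S}$ is the trace of a single affine hyperplane passing through the centre of the parallelotop $\mathcal{P}$; an elementary convexity estimate---choosing the edge direction $v^{(j)}$ on which the defining functional is largest, then balancing the signs of the remaining directions---shows that such a hyperplane meets at least one edge $E$ of $\mathcal{P}$, at a point where $F_{a,b}$ vanishes, so $a\nless_{E}b$ there. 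Thus $g$ defines a $\St_{\mathcal{N}}(R)$-equivariant isomorphism $\phi_{\mathcal{P}}:\mathcal{T}_{|\mathcal{S}}\overset{\sim}{\longrightarrow}\mathcal{T}^{\prime}_{|\mathcal{S}}$, $h\cdot t\mapsto(hg)\cdot t^{\prime}$, restricting to $\psi_{C(E)}$ on every edge $E$ of $\mathcal{P}$; moreover $\phi_{\mathcal{P}}$ is the unique isomorphism with this property, since two of them would differ by an automorphism of $\mathcal{T}_{|\mathcal{S}}$, i.e.\ by right translation by a section of $\St_{\mathcal{N}}(R)$ over $\mathcal{S}$ trivial on an edge, hence trivial by the injectivity of restriction in \ref{localstructure}.

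Finally I would glue. The $\mathcal{S}_{\mathcal{P}}$ cover $\mathds{T}$, and for $\mathcal{P}_{1},\mathcal{P}_{2}\in\mathscr{P}$ the isomorphisms $\phi_{\mathcal{P}_{1}}$ and $\phi_{\mathcal{P}_{2}}$ are, on the edges of $\mathcal{P}_{i}$, both equal to the globally coherent datum $\psi$; since a $\St_{\mathcal{N}}(R)$-equivariant isomorphism between trivial torsors over a connected open is determined by a single stalk (again by the stalk-injectivity of \ref{localstructure}), one point of $\mathcal{S}_{\mathcal{P}_{1}}\cap\mathcal{S}_{\mathcal{P}_{2}}$ at which the two agree---provided by the edge-grid of the chosen family---forces them to agree on the whole connected component of the overlap. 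The $\phi_{\mathcal{P}}$ then glue to an isomorphism $\mathcal{T}\overset{\sim}{\longrightarrow}\mathcal{T}^{\prime}$, so $\mathcal{T}=\mathcal{T}^{\prime}$ in $H^{1}(\mathds{T},\St_{\mathcal{N}}(R))$. I expect the main obstacle to be the local step---precisely the verification that the edge datum $g$ satisfies the inequalities defining $\Gamma(\mathcal{S},\St_{\mathcal{N}}(R))$, which forces one to use both that Stokes hyperplanes meeting an $\mathcal{I}$-good set pass through its centre and the convexity fact that a hyperplane through the centre of a parallelotop meets one of its edges; once this is in hand, the gluing is a routine rigidity argument.
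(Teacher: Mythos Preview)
Your proposal follows the same strategy as the paper: trivialise both torsors over each parallelotop, express the skeleton isomorphism there as right translation by a section $g$ of $\St_{\mathcal{N}}(R)$, show this $g$ extends by checking that every relevant Stokes hyperplane meets an edge of $\mathcal{P}$, and then glue. Your local step is correct; the paper states the geometric fact more simply as ``if $\mathcal{H}_{a,b}$ meets $\mathcal{P}$ then it meets an edge of $\mathcal{P}$'', which holds for any affine hyperplane meeting a parallelotop (the defining functional changes sign on the vertex set, and the edge graph is connected), so your detour through the centre and the sign-balancing estimate is unnecessary but not wrong.

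The gluing step is where your argument is looser than the paper's, and there is a genuine gap as written. You work on the open sets $\mathcal{S}_{\mathcal{P}}$, whose pairwise intersections need not be connected, and you only know that $\phi_{\mathcal{P}}$ agrees with $\psi$ on the \emph{edges of $\mathcal{P}$}. For gluing you would need, in each connected component of $\mathcal{S}_{\mathcal{P}_1}\cap\mathcal{S}_{\mathcal{P}_2}$, a point where both $\phi_{\mathcal{P}_1}$ and $\phi_{\mathcal{P}_2}$ coincide with $\psi$; a point on an edge of $\mathcal{P}_1$ gives $\phi_{\mathcal{P}_1}=\psi$ there, but says nothing about $\phi_{\mathcal{P}_2}$ unless that point also sits on an edge of $\mathcal{P}_2$. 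The phrase ``provided by the edge-grid of the chosen family'' does not furnish such a point. The paper sidesteps this in two ways: it works on the closed $\mathcal{P}$, whose pairwise intersections are convex hence connected, and it writes $f(t_1)=g\,t_2$ on \emph{all} of $\mathcal{P}\cap\mathcal{C}(\mathcal{I})$ (not just on the edges) before extending, so that the extended $f_{\mathcal{P}}$ agrees with $f$ at every point of $\mathcal{P}$ lying on any curve of $\mathscr{C}(\mathcal{I})$. Then an edge point of $\mathcal{P}_1$ inside $\mathcal{P}_1\cap\mathcal{P}_2$ lies in $\mathcal{C}(\mathcal{I})\cap\mathcal{P}_2$ as well, and both $f_{\mathcal{P}_i}$ equal $f$ there. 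Your argument is easily repaired along the same lines: restrict $\phi_{\mathcal{P}}$ back to $\mathcal{P}$, and note that since your $g$ is actually a section over the whole $\mathcal{I}$-good open, $\phi_{\mathcal{P}}$ coincides with $\psi$ on every point of $\mathcal{C}(\mathcal{I})\cap\mathcal{P}$ connected through the grid to an edge---but this has to be argued, not assumed.
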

\begin{proof}
We set 
$$
\mathcal{C}(\mathcal{I}):=\bigcup_{C\in \mathscr{C}(\mathcal{I})} C
$$
Let $\mathcal{T}_1, \mathcal{T}_2\in H^{1}(\mathds{T}, \St_{\mathcal{N}}(R))$ such that $\sk_{\mathscr{C}(\mathcal{I})}(\mathcal{T}_1)=\sk_{\mathscr{C}(\mathcal{I})}(\mathcal{T}_2)$ and let 
$$
f:\mathcal{T}_{1| \mathcal{C}(\mathcal{I})}\longrightarrow \mathcal{T}_{2|\mathcal{C}(\mathcal{I})}
$$ 
be the induced isomorphism. Let $\mathcal{P} \in \mathscr{P}$. From \ref{crittri}, we can choose sections $t_i \in \Gamma(\mathcal{P},\mathcal{T}_i )$. Let $g\in \Gamma(\mathcal{P}\cap \mathcal{C}(\mathcal{I}),\St_{\mathcal{N}}(R))$ such that $f(t_1)= g t_2$ on $\mathcal{P}\cap \mathcal{C}(\mathcal{I})$. If a Stokes hyperplan $\mathcal{H}_{ab}$ meets $\mathcal{P}$, then it meets an edge of $\mathcal{P}$, so it meets $\mathcal{P}\cap \mathscr{C}(\mathcal{I})$. Hence, $g$  extends uniquely to $\mathcal{P}$, so $f$ extends uniquely to an isomorphism $f_{\mathcal{P}}$ over $\mathcal{P}$. 
Let $\mathcal{P}_1, \mathcal{P}_2\in \mathscr{P}$ with $\mathcal{P}_1 \cap \mathcal{P}_2\neq \emptyset$. The transition functions of $\St_{\mathcal{N}}(R)$ between connected sets are injective. Hence, $f_{\mathcal{P}_1}$ and $f_{\mathcal{P}_2}$ coincide on the convex $\mathcal{P}_1 \cap \mathcal{P}_2$ if they coincide at a point of $ \mathcal{P}_1 \cap \mathcal{P}_2$.  Since $\mathcal{P}_1 \cap \mathcal{P}_2$ contains a point lying on the edge of $\mathcal{P}_1$  or $\mathcal{P}_2$, we are done. Hence, the $f_{\mathcal{P}}$ glue into a global isomorphism between $\mathcal{T}_{1}$ et $\mathcal{T}_{2}$.
\end{proof}
To justify the title of this subsection, we are left to prove the representability of $\St_{\mathcal{N}}\text{-}\Sk_{\mathscr{C}(\mathcal{I})}$ by an affine scheme.
\begin{lemme}\label{finitereprske}
For every finite set of curves $\mathscr{C}_f\subset \mathscr{C}$, the functor $\St_{\mathcal{N}}$-$\Sk_{\mathscr{C}_f}$ is representable by an affine scheme of finite type over $\mathds{C}$.
\end{lemme}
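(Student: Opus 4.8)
The plan is to realise the representing object as a closed subscheme of an affine scheme of finite type assembled from the one-dimensional theory. Recall that by Babbitt--Varadarajan \cite{BV} the functor $R\mapsto H^{1}(S^{1}_{C},\St_{\mathcal{N}_{C}}(R))$ is, for each $C\in\mathscr{C}_{f}$, representable by an affine scheme $\mathcal{H}_{C}$ of finite type over $\mathds{C}$, carrying a universal torsor $\mathcal{T}^{\univ}_{C}$. First I would set $\mathcal{H}:=\prod_{C\in\mathscr{C}_{f}}\mathcal{H}_{C}$, which is again affine of finite type; by \ref{autoId} an $R$-point of $\mathcal{H}$ is precisely a tuple $(\mathcal{T}_{C})_{C\in\mathscr{C}_{f}}$ of Stokes torsors, each determined up to a \emph{unique} isomorphism, so we may and do replace $\mathcal{T}_{C}$ by the pull-back of $\mathcal{T}^{\univ}_{C}$. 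Since $\mathscr{C}_{f}$ is finite and its members are real-analytic circles in $\mathds{T}$, the intersection locus $\Int\mathscr{C}_{f}$ is finite, and so is $\mathscr{C}_{f}(x)$ for every $x$.

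Next I would parametrise the gluing data. Fix $x\in\Int\mathscr{C}_{f}$ and $C\in\mathscr{C}_{f}(x)$. An $\mathcal{I}$-good open set at $x$ inside $S^{1}_{C}$ is a small interval, so Lemma \ref{crittri} makes $\mathcal{T}^{\univ}_{C}$ trivial there; a trivialising section yields a section $s_{C,x}$ of the stalk $(\mathcal{T}^{\univ}_{C})_{x}$, identifying it with the trivial torsor under $\St_{\mathcal{N}}(\mathds{C}[\mathcal{H}])_{x}$, which by \eqref{sortleR} is $\St_{\mathcal{N},x}(\mathds{C}[\mathcal{H}])$ for a unipotent (hence affine) complex algebraic group $\St_{\mathcal{N},x}$. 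For a \emph{distinct} pair $C,C'\in\mathscr{C}_{f}(x)$, the functor sending $R$ to the set of $\St_{\mathcal{N},x}$-equivariant identifications between the stalks at $x$ of the pull-backs of $\mathcal{T}^{\univ}_{C}$ and $\mathcal{T}^{\univ}_{C'}$ is, via $s_{C,x}$ and $s_{C',x}$, identified with $R\mapsto\St_{\mathcal{N},x}(R)$, hence representable by $\mathcal{H}\times_{\mathds{C}}\St_{\mathcal{N},x}$. Let $\mathcal{A}$ be the fibre product over $\mathcal{H}$ of all these schemes, as $x$ runs over $\Int\mathscr{C}_{f}$ and $(C,C')$ over ordered pairs of distinct curves in $\mathscr{C}_{f}(x)$; it is affine of finite type over $\mathds{C}$, and its $R$-points are tuples $(\mathcal{T}_{C})$ together with an a priori unconstrained system of equivariant stalk identifications $\iota_{C,C'}(x)$. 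Equivariance is exactly the compatibility \eqref{compatcarré}, the canonical identification $\iota_{C,C',\mathcal{G}}(x)$ being the identity on $\St_{\mathcal{N}}(R)_{x}$; so this is genuinely the datum of a system of skeleton identifications minus the groupoid axioms of \ref{defskeleton}.

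Finally I would cut out those axioms. Writing $\iota_{C,C'}(x)$ in the chosen trivialisations as $s_{C,x}\mapsto g_{C,C',x}\,s_{C',x}$ with $g_{C,C',x}\in\St_{\mathcal{N},x}$, the relation $\iota_{C,C}(x)=\id$ is built in (only $C\neq C'$ occur), while $\iota_{C,C'}(x)=\iota_{C',C}(x)^{-1}$ and $\iota_{C,C''}(x)=\iota_{C',C''}(x)\circ\iota_{C,C'}(x)$ become $g_{C',C,x}=g_{C,C',x}^{-1}$ and $g_{C,C'',x}=g_{C,C',x}\,g_{C',C'',x}$. Each is the equalizer of two morphisms $\mathcal{A}\to\St_{\mathcal{N},x}$; as $\St_{\mathcal{N},x}$ is affine, hence separated, each such equalizer is a closed subscheme of $\mathcal{A}$, and the intersection $\mathcal{B}$ of these finitely many closed subschemes is again affine of finite type over $\mathds{C}$. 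By \ref{autoId}, two skeleton torsors defining the same point of $\mathcal{H}$ and the same identifications are equal, while conversely any object of $\St_{\mathcal{N}}(R)\text{-}\Sk_{\mathscr{C}_{f}}$ is isomorphic to one built from an $R$-point of $\mathcal{H}$ and a system of $g$'s satisfying exactly the equations above; hence $\mathcal{B}$ represents $\St_{\mathcal{N}}\text{-}\Sk_{\mathscr{C}_{f}}$. The one delicate point is the representability of the individual stalk-identification functors: this is where Lemma \ref{crittri} is used to trivialise the universal torsors near the intersection points and \eqref{sortleR} to see the relevant sheaf of groups as the $R$-points of a genuine complex algebraic group. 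Granting that, the rest is finite limits of affine schemes together with the rigidity \ref{autoId}.
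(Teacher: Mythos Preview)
Your proof is correct and follows essentially the same route as the paper: use Babbitt--Varadarajan together with the rigidity \ref{autoId} to embed the functor into a finite product of the one-dimensional moduli and copies of the germ groups $\St_{\mathcal{N},x}$, then cut out the groupoid axioms on the gluing data as closed conditions. The only cosmetic differences are that you index over \emph{distinct} pairs $(C,C')$ from the outset (whereas the paper takes all of $\mathscr{C}_f(x)^{2}$ and imposes $g_{x,C,C}=\id$ as one more closed condition) and that you invoke \ref{crittri} and \eqref{sortleR} explicitly to justify the stalk trivialisations.
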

\begin{proof} 
For $C\in \mathscr{C}_f$, let $\mathcal{T}^{\univ}_{\mathcal{N}_C}$ be the universal Stokes torsor for $\mathcal{N}_C$ given by Babbitt-Varadarajan representability theorem in dimension 1. For  $x\in \Int \mathscr{C}_f$ and $C\in \mathscr{C}_f(x)$, choose $t_{x, C}\in \mathcal{T}^{\univ}_{\mathcal{N}_C,x}$.  Let $R\in \mathds{C}$-alg. \\ \indent
For every $C\in \mathscr{C}_f$, let $\mathcal{T}_C$ be a $\St_{\mathcal{N}_C}(R)$-torsor and let $(\iota_{C, C^{\prime}}(x))$ be a system of compatibilities as in \eqref{compatcarré}. There exists an isomorphism  $\iso_{\mathcal{T}_C}:\mathcal{T}^{\univ}_{\mathcal{N}_C}(R)\longrightarrow \mathcal{T}_C$ and we know from Theorem \ref{autoId} that it is unique.
For $x\in \Int \mathscr{C}_f$ and  $C, C^{\prime}\in \mathscr{C}_f(x)$, define $g_{x}(\mathcal{T}_C, \mathcal{T}_{C^{\prime}})$ as the unique element of $\St_{\mathcal{N}, x}(R)$ satisfying
$$
\iota_{C, C^{\prime}}(x)(\iso_{\mathcal{T}_C}(t_{x, C}(R)))= g_{x}(\mathcal{T}_C, \mathcal{T}_{C^{\prime}}) \iso_{\mathcal{T}_{C^{\prime}}}(t_{x, C^{\prime}}(R))
$$
From Theorem \ref{autoId}, we have a well-defined injective morphism of functors
$$
\St_{\mathcal{N}}\text{-}\Sk_{\mathscr{C}_f}\longrightarrow \displaystyle{\prod_{C\in  \mathscr{C}_f} H^{1}(S^{1}, \St_{\mathcal{N}_{C}})}\times \displaystyle{\prod_{x\in \Int \mathscr{C}_f} \St_{\mathcal{N}, x}^{\mathscr{C}_f(x)^{2}}}
$$
identifying $\St_{\mathcal{N}}$-$\Sk_{\mathscr{C}_f}$ with the subfunctor of
\begin{equation}\label{grosfoncteur}
\displaystyle{\prod_{C\in  \mathscr{C}_f} H^{1}(S^{1}, \St_{\mathcal{N}_{C}})}\times \displaystyle{\prod_{x\in \Int \mathscr{C}_f} \St_{\mathcal{N}, x}^{\mathscr{C}_f(x)^{2}}}
\end{equation}
of families $(\mathcal{T}, g)$ satisfying for every $x \in \Int \mathscr{C}_f$ and every $C, C^{\prime}\in \mathscr{C}_f(x)$, 
\begin{align*}
g_{x, C, C}& =\id \\
  g_{x, C, C^{\prime}}             & =   g_{x, C^{\prime}, C}^{-1} \\
   g_{x, C, C^{\prime \prime}} &  =   g_{x, C, C^{\prime}}  g_{x, C^{\prime}, C^{\prime \prime}} 
\end{align*}
Those conditions are algebraic, so they identify $\St_{\mathcal{N}}$-$\Sk_{\mathscr{C}_f}$ with a closed subscheme of \eqref{grosfoncteur}. 
\end{proof}
Taking the limit over finite subsets of $\mathscr{C}$ gives the following
\begin{corollaire}\label{Stskrepresentable}
The functor $\St_{\mathcal{N}}$-$\Sk_{\mathscr{C}}$ is representable by an affine scheme. 
\end{corollaire}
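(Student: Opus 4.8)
The proof is an application of the stability of affine schemes under inverse limits. The plan is to realise $\St_{\mathcal{N}}\text{-}\Sk_{\mathscr{C}}$ as the limit of the finite truncations $\St_{\mathcal{N}}\text{-}\Sk_{\mathscr{C}_f}$ made representable by Lemma \ref{finitereprske}, and then to invoke that $\Spec$ turns colimits of rings into limits of affine schemes.

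Let $\mathscr{J}$ be the poset of finite subsets of $\mathscr{C}$, ordered by inclusion; it is filtered. For $\mathscr{C}_f\subseteq \mathscr{C}_{f'}$ in $\mathscr{J}$, forgetting the data attached to the curves of $\mathscr{C}_{f'}$ outside $\mathscr{C}_f$ defines a restriction morphism of functors $\St_{\mathcal{N}}\text{-}\Sk_{\mathscr{C}_{f'}}\to \St_{\mathcal{N}}\text{-}\Sk_{\mathscr{C}_f}$, and similarly one has restriction morphisms $\St_{\mathcal{N}}\text{-}\Sk_{\mathscr{C}}\to \St_{\mathcal{N}}\text{-}\Sk_{\mathscr{C}_f}$ for every $\mathscr{C}_f\in \mathscr{J}$, forming a compatible cone. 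I would first show that the induced map
$$
\St_{\mathcal{N}}\text{-}\Sk_{\mathscr{C}}\longrightarrow \varprojlim_{\mathscr{C}_f\in \mathscr{J}}\St_{\mathcal{N}}\text{-}\Sk_{\mathscr{C}_f}
$$
is an isomorphism of functors on $\mathds{C}$-alg. The key point is that Definition \ref{defskeleton} is of finite character: an object of $\St_{\mathcal{N}}(R)\text{-}\Sk_{\mathscr{C}}$ consists of a $\St_{\mathcal{N}_C}(R)$-torsor $\mathcal{F}_C$ for each $C\in \mathscr{C}$ together with identifications $\iota_{C,C'}(x)$ over the points $x\in C\cap C'$, the constraints being the two-curve symmetry relation, the three-curve cocycle relation, and the torsor compatibility \eqref{compatcarré} --- each of which involves at most three curves of $\mathscr{C}$ at a time. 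Since $\mathscr{C}=\bigcup_{\mathscr{C}_f\in \mathscr{J}}\mathscr{C}_f$, a compatible family on the right-hand side determines each torsor $\mathcal{F}_C$ (from any $\mathscr{C}_f\ni C$) and each identification $\iota_{C,C'}(x)$ (from any $\mathscr{C}_f\supseteq \{C,C'\}$), with all the constraints witnessed inside a suitable $\mathscr{C}_f$; by Proposition \ref{autoId} these pieces carry no nontrivial automorphism, so the chosen representatives glue into a well-defined isomorphism class over $\mathscr{C}$, giving a two-sided inverse to restriction.

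Next I would combine this with representability. By Lemma \ref{finitereprske} each $\St_{\mathcal{N}}\text{-}\Sk_{\mathscr{C}_f}$ is the functor of points of an affine $\mathds{C}$-scheme $\Spec A_{\mathscr{C}_f}$ of finite type, and the transition morphisms come from $\mathds{C}$-algebra homomorphisms $A_{\mathscr{C}_f}\to A_{\mathscr{C}_{f'}}$, forming a filtered system of rings. A limit of representable functors is represented by the limit scheme, which in the affine case is the spectrum of the colimit of coordinate rings:
$$
\varprojlim_{\mathscr{C}_f\in \mathscr{J}}\Spec A_{\mathscr{C}_f}\;\simeq\;\Spec\Big(\varinjlim_{\mathscr{C}_f\in \mathscr{J}} A_{\mathscr{C}_f}\Big).
$$
Hence $\St_{\mathcal{N}}\text{-}\Sk_{\mathscr{C}}$ is represented by this affine scheme, which proves the corollary.

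The only step that is not purely formal is the identification of $\St_{\mathcal{N}}\text{-}\Sk_{\mathscr{C}}$ with the limit of its finite truncations --- concretely, checking that a compatible system of isomorphism classes of finite skeleton torsors actually glues into a single one --- and this is exactly where Proposition \ref{autoId} on the absence of nontrivial automorphisms is used. Everything else reduces to the standard fact that the category of affine $\mathds{C}$-schemes is closed under limits, computed via colimits of $\mathds{C}$-algebras.
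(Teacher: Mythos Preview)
Your proposal is correct and follows essentially the same approach as the paper: the paper's proof is the single sentence ``Taking the limit over finite subsets of $\mathscr{C}$ gives the following'', and you have carefully unpacked what this limit argument entails, including the use of Proposition~\ref{autoId} to ensure that a compatible system of isomorphism classes glues unambiguously.
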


\section{Sheaf property and tangent spaces of $H^{1}(\mathds{T}, \St_{\mathcal{N}})$}\label{Artincond}

\subsection{A technical lemma}
In this subsection, we work in dimension 1. Let $f: R\longrightarrow S$ be a morphism of rings. Let $I=]a,b[$ be a strict interval of $S^1$, let $a< d_1 < \cdots <  d_k < b$  be the Stokes lines of $\mathcal{N}$ contained in $I$. Set $d_0=a$, $d_{k+1}=b$ and $I_i:=]d_{i-1}, d_{i+1}[$ for $i=1,\dots, k$. The following lemma is obvious when $f$ is surjective. This is however the injective case that will be relevant to us.
\begin{lemme}\label{deRdansS}
For every $\mathcal{T}\in H^{1}(I, \St_{\mathcal{N}}(R))$ such that $\mathcal{T}(S)$ is trivial, there exists $t_i \in \Gamma(I_i, \mathcal{T})$ for every $i=1, \dots, k$ such that the $t_i(S)$ glue into a global section of $\mathcal{T}(S)$ on $I$.
\end{lemme}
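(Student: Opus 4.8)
\emph{Plan.} I would reduce first to the case where $f$ is injective, and then prove the stronger statement that $\mathcal{T}$ is already trivial on $I$.

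To handle a general $f$, factor it as $R\twoheadrightarrow f(R)\hookrightarrow S$. Over any strict interval $J$ the description in \ref{localstructure} gives $\Gamma(J,\St_{\mathcal{N}}(A))=\Id+A\otimes_{\mathds{C}}N_J$ for every $\mathds{C}$-algebra $A$ (where $N_J\subset\End E$ is the subspace of $f$ with $p_afi_b=0$ unless $a<_Jb$), so $\St_{\mathcal{N}}(R)\to\St_{\mathcal{N}}(f(R))$ is surjective on sections over each $J$; since $\mathcal{T}_{|I_i}$ is trivial by \ref{crittri}, the maps $\Gamma(I_i,\mathcal{T})\to\Gamma(I_i,\mathcal{T}(f(R)))$ are then surjective as well. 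Hence it is enough to produce sections of $\mathcal{T}(f(R))$ over the $I_i$ whose images in $\mathcal{T}(S)$ glue and lift them along $R\twoheadrightarrow f(R)$; so we may assume $f\colon R\hookrightarrow S$ is injective. In that case each $\Gamma(J,\St_{\mathcal{N}}(R))\hookrightarrow\Gamma(J,\St_{\mathcal{N}}(S))$ is injective (it is $R\otimes_{\mathds{C}}N_J\hookrightarrow S\otimes_{\mathds{C}}N_J$), so a global section $t$ of $\mathcal{T}_{|I}$ already makes the $t_{|I_i}$ glue over $R$; it therefore suffices to show that triviality of $\mathcal{T}(S)$ implies triviality of $\mathcal{T}_{|I}$.

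I would prove this by induction on the number $k$ of Stokes lines of $\mathcal{N}$ in $I$. For $k\leq 1$ the interval $I$ is $\mathcal{I}$-good (\ref{Igoodopen}), so $\mathcal{T}_{|I}$ is trivial by \ref{crittri}. For $k\geq 2$, set $I^-:=\,]a,d_k[$, which carries $d_1,\dots,d_{k-1}$, and $I^+:=I_k=\,]d_{k-1},b[$, which carries only $d_k$. Since $\mathcal{T}(S)_{|I^-}$ is trivial, the inductive hypothesis yields $\tau^-\in\Gamma(I^-,\mathcal{T})$, and \ref{crittri} yields $\tau^+\in\Gamma(I^+,\mathcal{T})$. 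Fixing fundamental matrices on the strict interval $I$ as in \ref{localstructure} realises all the $\Gamma(J,\St_{\mathcal{N}}(R))$ and $\Gamma(J,\St_{\mathcal{N}}(S))$ for $J\subset I$ compatibly inside $\GL_r(R)$ and $\GL_r(S)$, so that the relation $\tau^-=c\,\tau^+$ on $]d_{k-1},d_k[$ defines $c\in\Gamma(]d_{k-1},d_k[,\St_{\mathcal{N}}(R))\subset\GL_r(R)$. It is enough to factor $c=q^{-1}q'$ with $q\in\Gamma(I^-,\St_{\mathcal{N}}(R))$ and $q'\in\Gamma(I^+,\St_{\mathcal{N}}(R))$: then $q\tau^-$ and $q'\tau^+$ agree on $]d_{k-1},d_k[$ and glue to a global section of $\mathcal{T}_{|I}$.

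To obtain such a factorisation I would use a global section $\sigma\in\Gamma(I,\mathcal{T}(S))$ afforded by the triviality of $\mathcal{T}(S)$. Writing $\sigma_{|I^-}=\alpha\,\tau^-(S)$ and $\sigma_{|I^+}=\beta\,\tau^+(S)$ with $\alpha\in\Gamma(I^-,\St_{\mathcal{N}}(S))$, $\beta\in\Gamma(I^+,\St_{\mathcal{N}}(S))$, comparison on $]d_{k-1},d_k[$ gives $c=\alpha^{-1}\beta$, so $c$ already factors through $\Gamma(I^-,\St_{\mathcal{N}}(S))\cdot\Gamma(I^+,\St_{\mathcal{N}}(S))$. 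To descend this factorisation to $R$ one filters $\St_{\mathcal{N}}$ with abelian graded pieces (e.g. by pole orders, in the spirit of \cite{BV}); on each graded piece the two subgroups $\Gamma(I^{\pm},\St_{\mathcal{N}}(A))$ become the subspaces of $\End E\otimes_{\mathds{C}}A$ cut out by $<_{I^-}$ and $<_{I^+}$, their product becomes the sum of those subspaces, and \eqref{uneegalite} identifies the $R$-points of that sum with the intersection of its $S$-points with $\GL_r(R)$; since $c$ has coefficients in $R$, its image in each graded piece then lies in the sum over $R$, and lifting and iterating down the filtration produces $q$ and $q'$. \textbf{The hard part} is exactly this transfer from $S$ to $R$: a component of $c$ lying in neither of the two subgroups — a Stokes factor $a,b\in\mathcal{I}$ whose arc $\{F_{a,b}<0\}$ is strictly interior to $]a,b[$ — cannot be absorbed into $q$ or $q'$ and must be forced to vanish, and it is precisely here that triviality of $\mathcal{T}(S)$ over the whole of $I$ (not merely on its subintervals) is used, since for such a component the relevant $H^{1}(I,-)$ is a $\mathds{C}$-vector space tensored with the coefficient ring, so that its vanishing over $S$ forces its vanishing over $R$; combined with \eqref{uneegalite} level by level this closes the induction.
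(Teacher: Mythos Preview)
Your argument is correct and is essentially the paper's own proof, rephrased. Both proceed by induction on $k$, split $I$ into $I^-=\,]a,d_k[$ and $I^+=I_k$, obtain sections on each piece, and then correct them diagonal by diagonal so that they match. The only real difference is packaging: you first reduce to $f$ injective and then prove the equivalent stronger statement that $\mathcal{T}$ itself is trivial on $I$, arguing abstractly that at each level of the nilpotent filtration the obstruction lies in $(N_{I^-}+N_{I^+})\otimes_{\mathds C}R$; the paper instead keeps the ``$t_i(S)$ glue'' formulation for general $f$ and writes out the level-$d$ correction explicitly as matrices $A,B$ built from the $R$-valued cocycle $\gamma$ comparing $t'_{k-1}$ and $t'_k$.

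Two small points on your write-up. First, the reference to \eqref{uneegalite} is not quite the right one: that identity concerns a single subgroup $\Gamma(\mathcal S,\St_{\mathcal N}(-))$, whereas what you actually use is that $N_{I^-}+N_{I^+}$ is a $\mathds C$-linear subspace of $\End E$, so that $((N_{I^-}+N_{I^+})\otimes S)\cap(\End E\otimes R)=(N_{I^-}+N_{I^+})\otimes R$ whenever $R\hookrightarrow S$. Second, the sentence about ``the relevant $H^1(I,-)$'' in your hard-part paragraph is heuristic; the honest content is the computation you allude to just before: once the first $d$ diagonals of $\alpha$ and $\beta$ have been made to agree, one checks (exactly as in the paper) that the $(d{+}1)$-diagonal of $c$ equals $\beta_{d+1}-\alpha_{d+1}$, which lies in $(N_{I^-}+N_{I^+})\otimes S$ and hence in $(N_{I^-}+N_{I^+})\otimes R$. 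With that step written out, your argument and the paper's are the same.
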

\begin{proof}
We argue by recursion on the number of Stokes lines in $I$. The case $k=1$ is implied by \ref{crittri}. Suppose $k>1$ and let  $t_i^{\prime} \in \Gamma(I_i, \mathcal{T})$ for $i=1, \dots, k-1$ as given by the recursion hypothesis applied to $J := ]a,d_k[$. From \ref{crittri}, we can choose $t_k^{\prime}\in  \Gamma(I_k, \mathcal{T})$ and we want to modify the $t_i^{\prime}$, $i=1, \dots, k$ so that the conclusion of \ref{deRdansS} holds.  \\ \indent
Since $\mathcal{T}(S)$ is trivial, we choose $t\in \Gamma(I, \mathcal{T}(S))$ and denote by $t^{\prime}\in \Gamma(J, \mathcal{T}(S))$ the gluing of the $t_i^{\prime}(S)$ for $ i=0,\dots, k-1$. We write $t^{\prime}=g t$ with $g\in  \Gamma(J, \St_{\mathcal{N}}(S))$ and $t^{\prime}_k(S)=h t$ with $h\in  \Gamma(I_{k}, \St_{\mathcal{N}}(S))$. In the matricial representation \ref{prefered} induced by $<_{]d_{k-1}, d_k[}$, the automorphisms $g$ and $h$ correspond to upper triangular matrices. We argue by recursion on $d$ that at the cost of modifying the $t^{\prime}_i$, we can always suppose that $g$ and $h$ have the same $j$-diagonal for $j=1, \dots , d$. For $d=1$, there is nothing to do.  Suppose $d>1$ and write $g=\Id + M$ and $h=\Id + N$ where $M$ and $N$ are nilpotent matrices. On $I_{k-1}\cap I_k$ we have  $t_{k-1}^{\prime}= \gamma t_{k}^{\prime}$ with $\gamma \in  \Gamma( I_{k-1}\cap I_k , \St_{\mathcal{N}}(R))$. Hence,  on $I_{k-1}\cap I_k$
$$
gt=t^{\prime}=\gamma(S)t_k^{\prime}(S)=\gamma(S)h t
$$ 
so we deduce 
$$
gh^{-1} = \id+\displaystyle{\sum_{j=0}^{r}}(-1)^{j}  (M-N)N^j =\gamma(S)
$$
We denote by $\Diag_i(A)$ the $i$-upper diagonal of a matrix $A$.
Since $\Diag_i(M)=\Diag_i(N)$  for $i=1, \dots, d$,
we have $D_{d+1}((M-N) N^j)=0$  unless $j=0$. Hence
$$
 \left\{
    \begin{array}{ll}
        M_{i, i+d}-N_{i, i+d}=f(\gamma_{i, i+d})& \mbox{if } (i, i+d)\notin  \Jump_{\mathcal{N}}(J)\cup \Jump_{\mathcal{N}}(I_k)\\
      M_{i, i+d} = f(\gamma_{i, i+d})& \mbox{if }    (i, i+d) \in \Jump_{\mathcal{N}}(I_k)   \\
      N_{i, i+d} = -f(\gamma_{i, i+d})& \mbox{if }    (i, i+d) \in \Jump_{\mathcal{N}}(J)       
    \end{array}
\right.
$$
Let us define $A, B\in \GL_r(R)$ as follows
$$
 \left\{
    \begin{array}{ll}
        A_{i, i+d}=- \gamma_{i, i+d}  & \mbox{if }    (i, i+d) \in \Jump_{\mathcal{N}}(I_k)  \setminus ( \Jump_{\mathcal{N}}(J)\cap \Jump_{\mathcal{N}}(I_k))\\
        A_{i,j}=0  & \mbox{otherwise} 
    \end{array}
\right.
$$
and 
$$
 \left\{
    \begin{array}{ll}
        B_{i, i+d}= \gamma_{i, i+d}  & \mbox{if }    (i, i+d) \notin \Jump_{\mathcal{N}}(I_k)  \cup\Jump_{\mathcal{N}}(J)\\
        B_{i, i+d}=\gamma_{i, i+d} & \mbox{if }  (i,i+d)\in  \Jump_{\mathcal{N}}(J)  \setminus ( \Jump_{\mathcal{N}}(J)\cap \Jump_{\mathcal{N}}(I_k))\\
        B_{i,j}=0  & \mbox{otherwise} 
    \end{array}
\right.
$$
Note that $\Id+A\in \Gamma(J, \St_{\mathcal{N}}(R))$ and $\Id+B\in \Gamma(I_k, \St_{\mathcal{N}}(R))$. For $i=1, \dots, d+1$, we have $\Diag_i(f(A)M)=\Diag_i(f(B)N)=0$. If $i<d+1$, we deduce by recursion hypothesis
\begin{align*}
\Diag_i((\Id+f(A))g)  & =\Diag_i(\Id) + \Diag_i(M) \\
                             & =\Diag_i(\Id) + \Diag_i(N)\\
                              & = \Diag_i((\Id+f(B))h)  
\end{align*}
Finally, we have by definition of $A$ and $B$
\begin{align*}
\Diag_{d+1}((\Id+f(A))g)& =\Diag_{d+1}(f(A)+M)\\
                               &   =\Diag_{d+1}(f(B)+N)\\
                               &=\Diag_{d+1}((\Id+f(B))h)
\end{align*}
Replacing the $t_i^{\prime}$ by $(\Id+A) t_i^{\prime}$ for $i=1, \dots, k-1$ and $t_k^{\prime}$ by $(\Id+B)t_k^{\prime}$ changes $g$ into $(\Id+f(A))g$ and $h$ into $(\Id+f(B))h$. Keeping on with this process leads to a situation where $g=h$.
\end{proof}


We deduce the following
\begin{corollaire}\label{trivialiff}
If $R\longrightarrow S$ is an etale cover and if $I$ is an interval of $S^1$, then  $\mathcal{T}\in H^{1}(I, \St_{\mathcal{N}}(R))$ is trivial iff $\mathcal{T}(S)$ is trivial.
\end{corollaire}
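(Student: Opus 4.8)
The plan is to reduce everything to Lemma \ref{deRdansS}. One implication is immediate: the push-forward functor $\mathcal{T}\mapsto\mathcal{T}(S)$ sends the trivial torsor to the trivial torsor, so if $\mathcal{T}$ is trivial then so is $\mathcal{T}(S)$. All the content is in the converse, which is essentially a repackaging of \ref{deRdansS} together with the injectivity statements of \ref{localstructure}.

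So suppose $\mathcal{T}(S)$ is trivial, and adopt the notation of \ref{deRdansS}: write $I=]a,b[$, let $a<d_1<\cdots<d_k<b$ be the Stokes lines of $\mathcal{N}$ contained in $I$, set $d_0=a$, $d_{k+1}=b$ and $I_i=]d_{i-1},d_{i+1}[$. (If $k=0$ there are no Stokes lines in $I$, $\St_{\mathcal{N}}(R)$ is constant on the connected set $I$, and $\mathcal{T}$ is trivial outright; so assume $k\geq 1$.) First I would apply Lemma \ref{deRdansS} to obtain sections $t_i\in\Gamma(I_i,\mathcal{T})$, $i=1,\dots,k$, whose images $t_i(S)$ glue into a global section of $\mathcal{T}(S)$ over $I$. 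On each nonempty connected overlap $I_i\cap I_{i+1}=]d_i,d_{i+1}[$ there is a unique $g_i\in\Gamma(I_i\cap I_{i+1},\St_{\mathcal{N}}(R))$ with $t_i=g_i\,t_{i+1}$; by equivariance of push-forward, the fact that $t_i(S)$ and $t_{i+1}(S)$ agree on this overlap says precisely that $f(g_i)=\Id$ in $\Gamma(I_i\cap I_{i+1},\St_{\mathcal{N}}(S))$ for every $i$.

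It then remains to upgrade $f(g_i)=\Id$ to $g_i=\Id$ over $R$. Since $R\to S$ is an \'etale cover it is faithfully flat, hence injective; because $I_i\cap I_{i+1}$ is connected and contained in a strict interval, the local description recalled in \ref{localstructure} identifies $\Gamma(I_i\cap I_{i+1},\St_{\mathcal{N}}(R))$ and $\Gamma(I_i\cap I_{i+1},\St_{\mathcal{N}}(S))$ with subsets of $R\otimes_{\mathds{C}}\End E$ and $S\otimes_{\mathds{C}}\End E$ respectively, compatibly with the inclusion $R\otimes_{\mathds{C}}\End E\hookrightarrow S\otimes_{\mathds{C}}\End E$. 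Hence $\Gamma(I_i\cap I_{i+1},\St_{\mathcal{N}}(R))\to\Gamma(I_i\cap I_{i+1},\St_{\mathcal{N}}(S))$ is injective, so $g_i=\Id$ for all $i$, the $t_i$ themselves glue into a global section of $\mathcal{T}$ over $I$, and $\mathcal{T}$ is trivial.

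There is no real obstacle remaining at this stage: the essential work — producing local sections over $R$ that become mutually compatible after base change to $S$ — was already done in Lemma \ref{deRdansS} by the diagonal-by-diagonal correction argument. The only point to be \emph{careful} about in the corollary itself is that it is faithful flatness of $R\to S$ (used only through injectivity of $R\to S$) that makes $\St_{\mathcal{N}}(R)\to\St_{\mathcal{N}}(S)$ injective on sections over the overlaps, and this is exactly the input that converts "$f(g_i)=\Id$" into "$g_i=\Id$".
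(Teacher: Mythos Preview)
Your argument for a strict interval $I=]a,b[$ is correct and is exactly what the paper does in that case: apply Lemma \ref{deRdansS}, then use that an \'etale cover $R\to S$ is faithfully flat and hence injective to deduce $g_i=\Id$ from $f(g_i)=\Id$.

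There is however a genuine gap: the corollary is stated for \emph{any} interval of $S^1$, and this includes $I=S^1$ itself. Your setup ``write $I=]a,b[$'' silently excludes this case, and Lemma \ref{deRdansS} is only formulated for strict intervals, so you cannot invoke it when $I=S^1$. The cyclic situation is genuinely different: there is no ``first'' and ``last'' interval, and the induction scheme of \ref{deRdansS} does not immediately adapt.

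The paper handles $I=S^1$ by a completely separate argument: Babbitt--Varadarajan representability says that $H^1(S^1,\St_{\mathcal{N}})$ is an affine scheme, hence by descent it is a sheaf for the \'etale topology on $\mathds{C}$-algebras. In particular the map $H^1(S^1,\St_{\mathcal{N}}(R))\to H^1(S^1,\St_{\mathcal{N}}(S))$ is injective, so if $\mathcal{T}(S)$ is trivial then $\mathcal{T}$ is trivial. You should add a sentence covering this case.
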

\begin{proof}
The only statement requiring a proof is the converse statement. By Babbitt-Varadarajan representability and descent  \cite{SGA1}, the functor $H^{1}(S^{1}, \St_{\mathcal{N}})$  is a sheaf so \ref{trivialiff} is known for $I=S^{1}$. If $I$ is a strict interval, this is a consequence of \ref{deRdansS} since an etale cover is an injective morphism of rings.
\end{proof}

\subsection{Sheaf property}
The goal of this subsection is to prove the following

\begin{lemme}\label{sheaf}
The functor $\St_{\mathcal{N}}$-$\AdSk_{\mathscr{C}}$ is a sheaf for the etale topology on $\mathds{C}$-alg.
\end{lemme}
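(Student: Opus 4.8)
To prove that $\St_{\mathcal{N}}\text{-}\AdSk_{\mathscr{C}}$ is an étale sheaf on $\mathds{C}\text{-}\mathrm{alg}$, I will combine Theorem \ref{equcat} with the already established sheaf property of $H^{1}(S^{1},\St_{\mathcal{N}})$ in dimension one. The first step is a reduction: by Theorem \ref{equcat}, for every $\mathds{C}$-algebra $R$ the restriction map identifies $\St_{\mathcal{N}}(R)\text{-}\AdSk_{\mathscr{C}}$ with $H^{1}(\mathds{T},\St_{\mathcal{N}}(R))$, so it suffices to prove that the functor $R\longmapsto H^{1}(\mathds{T},\St_{\mathcal{N}}(R))$ is an étale sheaf. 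However we cannot invoke descent for torsors on $\mathds{T}$ directly, since the coefficient sheaf $\St_{\mathcal{N}}(R)$ is itself built from $R$; the point is precisely to propagate the one-dimensional descent statement along curves, and it is here that admissibility and the finite family $\mathscr{C}(\mathcal{I})$ enter.

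\emph{Separatedness.} Let $R\longrightarrow \prod_{\alpha} R_{\alpha}$ be an étale cover (so $R\longrightarrow R_{\alpha}$ and hence $R\longrightarrow\prod_\alpha R_\alpha$ is injective), and let $\mathcal{T}_{1},\mathcal{T}_{2}\in H^{1}(\mathds{T},\St_{\mathcal{N}}(R))$ become isomorphic after base change to each $R_{\alpha}$. For each curve $C\in\mathscr{C}(\mathcal{I})$ the restrictions $\mathcal{T}_{1|C},\mathcal{T}_{2|C}$ lie in $H^{1}(S^{1}_C,\St_{\mathcal{N}_C}(R))$, and by Babbitt--Varadarajan representability together with fpqc descent (as used in the proof of \ref{trivialiff}) the one-dimensional functor $H^{1}(S^{1},\St_{\mathcal{N}_C})$ is a sheaf, so $\mathcal{T}_{1|C}\simeq\mathcal{T}_{2|C}$ over $R$, and by the uniqueness of isomorphisms of Stokes torsors (Proposition \ref{autoId}) these isomorphisms are canonical hence automatically compatible at intersection points $x\in\Int\mathscr{C}(\mathcal{I})$. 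Thus $\sk_{\mathscr{C}(\mathcal{I})}(\mathcal{T}_{1})=\sk_{\mathscr{C}(\mathcal{I})}(\mathcal{T}_{2})$, and Lemma \ref{famillespecial} gives $\mathcal{T}_{1}\simeq\mathcal{T}_{2}$ over $R$ — again uniquely by \ref{autoId}. This proves the separated (monomorphism) part of the sheaf axiom.

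\emph{Gluing.} Now suppose given descent data: $\mathcal{T}_{\alpha}\in H^{1}(\mathds{T},\St_{\mathcal{N}}(R_{\alpha}))$ with isomorphisms over $R_{\alpha}\otimes_{R}R_{\beta}$ satisfying the cocycle condition. Restricting to each $C\in\mathscr{C}(\mathcal{I})$ and using the one-dimensional sheaf property, the $\mathcal{T}_{\alpha|C}$ glue to a torsor $\mathcal{T}_{C}\in H^{1}(S^{1}_C,\St_{\mathcal{N}_C}(R))$, canonically by \ref{autoId}; the canonicity forces the compatibilities at points of $\Int\mathscr{C}(\mathcal{I})$, so we obtain $(\mathcal{T}_{C})_{C\in\mathscr{C}(\mathcal{I})}\in\St_{\mathcal{N}}(R)\text{-}\Sk_{\mathscr{C}(\mathcal{I})}$ whose base change to each $R_{\alpha}$ is $\sk_{\mathscr{C}(\mathcal{I})}(\mathcal{T}_{\alpha})$. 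It remains to see that this skeleton over $\mathscr{C}(\mathcal{I})$ is in the image of \eqref{injfamillespeciale}, i.e.\ comes from a genuine global torsor over $\mathds{T}$. Here I use the covering family $\mathscr{P}$ of parallelotops from \ref{mono}: over each $\mathcal{P}\in\mathscr{P}$ the restriction $\mathcal{T}_{\alpha|\mathcal{P}}$ is trivial (by \ref{crittri}), and the argument in the proof of \ref{famillespecial} — extending a trivializing section from $\mathcal{P}\cap\mathcal{C}(\mathcal{I})$ across $\mathcal{P}$ using that every Stokes hyperplane meeting $\mathcal{P}$ meets one of its edges, hence meets $\mathscr{C}(\mathcal{I})$ — shows that the gluing data on curves determines trivializations over each $\mathcal{P}$ whose transition functions, being injective between connected sets and agreeing on the edges, glue into a global cocycle over $\mathds{T}$ with coefficients in $\St_{\mathcal{N}}(R)$. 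This produces $\mathcal{T}\in H^{1}(\mathds{T},\St_{\mathcal{N}}(R))$ with $\mathcal{T}(R_{\alpha})\simeq\mathcal{T}_{\alpha}$, completing the proof. The main obstacle is precisely this last step: the one-dimensional sheaf property only gives descent along each individual curve, and one must argue — via the good local structure of $\St_{\mathcal{N}}$ on $\mathcal{I}$-good opens and the combinatorics of $\mathscr{P}$ and $\mathscr{C}(\mathcal{I})$ — that curve-wise descent over $R$ assembles into descent over $\mathds{T}$; admissibility (through Theorem \ref{equcat} and Corollary \ref{trivialsurI}) is what guarantees the assembled object is an honest torsor and not merely a pseudo-torsor.
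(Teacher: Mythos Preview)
Your separatedness argument is essentially correct, but your gluing argument has a genuine gap, and in any case the paper takes a much shorter route.

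The paper's proof does not try to show directly that $R\mapsto H^{1}(\mathds{T},\St_{\mathcal{N}}(R))$ is a sheaf. Instead it observes that $\St_{\mathcal{N}}\text{-}\Sk_{\mathscr{C}}$ (the ambient functor of \emph{all} skeletons, with no admissibility imposed) is representable by an affine scheme (Corollary~\ref{Stskrepresentable}), hence is automatically an étale sheaf. Since $\St_{\mathcal{N}}\text{-}\AdSk_{\mathscr{C}}$ is a subfunctor, it is a separated presheaf, and the only thing to check is that admissibility descends along an étale cover $R\to S$: if a skeleton over $R$ becomes admissible over $S$, it was already admissible over $R$. This is a local check on a single $\mathscr{C}$-polygon, carried out using Corollary~\ref{trivialiff} and the identity~\eqref{uneegalite}. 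No global reconstruction is needed.

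The gap in your gluing step is the sentence ``the argument in the proof of~\ref{famillespecial} \dots\ shows that the gluing data on curves determines trivializations over each $\mathcal{P}$.'' Lemma~\ref{famillespecial} is an \emph{injectivity} statement; it starts from two global torsors and compares them. You are in the opposite situation: you have curve-wise torsors $\mathcal{T}_C$ over $R$ and a global torsor $\mathcal{T}_S$ over $S$, and you must \emph{construct} a global torsor over $R$. There is nothing over $R$ on $\mathcal{P}$ to trivialize; the phrase ``extending a trivializing section from $\mathcal{P}\cap\mathcal{C}(\mathcal{I})$ across $\mathcal{P}$'' has no referent. What is actually required is to choose trivializations $t_{\mathcal{P}}$ of $\mathcal{T}_S$ over each $\mathcal{P}$, normalize them at $x_{\mathcal{P}}$ by an $R$-section coming from a curve, and then prove that the resulting $S$-valued cocycle $(g_{\mathcal{P}_1\mathcal{P}_2})$ already lies in $\St_{\mathcal{N}}(R)$. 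That argument exists in the paper, but it is the content of~\S\ref{closedimmersionpreuve} (the valuative-criterion step), not of~\ref{famillespecial}; it uses condition~(3) of~\ref{mono} in an essential way together with Lemma~\ref{deRdansS} and~\eqref{uneegalite}, none of which you invoke. So your strategy is salvageable, but only by importing the later and more delicate argument of~\S\ref{closedimmersionpreuve}; the paper's route via the representable ambient functor $\St_{\mathcal{N}}\text{-}\Sk_{\mathscr{C}}$ avoids this entirely and reduces everything to the one-line descent of admissibility.
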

\begin{proof}
From \ref{Stskrepresentable} and descent \cite{SGA1}, the functor $\St_{\mathcal{N}}$-$\Sk_{\mathscr{C}}$ is a sheaf for the etale topology. In particular, the subfunctor  $\St_{\mathcal{N}}$-$\AdSk_{\mathscr{C}}$ is a presheaf. To prove that it is a sheaf, we are left to show that for every etale cover $R\longrightarrow S$, a Stokes skeleton $(\mathcal{T}_{\mathscr{C}}, \iota_{\mathscr{C}})\in \St_{\mathcal{N}}\text{-}\Sk_{\mathscr{C}}(R)$ is admissible if $(\mathcal{T}_{\mathscr{C}}(S), \iota_{\mathscr{C}}(S))$ is admissible.\\ \indent
Let $\mathcal{S}$ be a $\mathcal{I}$-good open set, let  
$\pi : P\longrightarrow \mathcal{S}$ be a $\mathscr{C}$-polygon  with vertices $E_i=[x_i, x_{i+1}]$,  $i\in \mathds{Z}/N\mathds{Z}$,  let $t_i\in \Gamma(\pi(E_i), \mathcal{T}_{C(E_i)})$, $i=1, \dots, N-1$ such that
$$
t_i(\pi(x_{i+1}))=t_{i+1}(\pi(x_{i+1})) \text{  in } E(\mathcal{T}_\mathscr{C}, \iota_{\mathscr{C}})
$$
Let $t^{\prime}\in  \Gamma([x_N,x_1], \mathcal{T}_{C(E_N)}(S))$ be the unique section satisfying 
$$
t^{\prime}(\pi(x_N))=t_{N-1}(S)(\pi(x_N)) \text{ and }  t^{\prime}(\pi(x_1))=t_{1}(S)(\pi(x_1))  \text{  in } E(\mathcal{T}_\mathscr{C}(S), \iota_{\mathscr{C}}(S))
$$
Since $\mathcal{T}_{C(E_N)}(S)$ is trivial on the strict closed interval $\pi([x_N,x_1])$, it is trivial on a strict open interval containing  $\pi([x_N,x_1])$. We deduce from \ref{trivialiff} that $\mathcal{T}_{C(E_N)}$ is trivial on $\pi([x_N,x_1])$. Let $t\in \Gamma(\pi([x_N,x_1]), \mathcal{T}_{C(E_N)})$ and let $g\in \Gamma(\pi([x_N,x_1]), \St_{\mathcal{N}}(S))$ such that $t^{\prime}=g t(S)$. From \eqref{uneegalite} applied to  $Z=\pi([x_N,x_1])$ and $Y=\pi(x_1)$, we get that $g$ is defined over $R$. The section $gt$ is the sought-after section.
\end{proof}
\subsection{Twisted Lie algebras, tangent space and obstruction theory}\label{twisted}
For $R\in \mathds{C}$-alg, we denote by $\Lie \St_{\mathcal{N}}(R)$ the sheaf of Lie algebras over $R$ on $\mathds{T}$ induced by $\St_{\mathcal{N}}(R)$. Concretely,  $\Lie \St_{\mathcal{N}}(R)$ is the subsheaf of   $R\otimes_{\mathds{C}} (j_{D\ast}\mathcal{H}^{0}\DR\End \mathcal{N})_{|\mathds{T}}$ of sections $f$ satisfying $p_a f i_b = 0$ unless  $a<_\mathcal{S} b$. \\ \indent
Let $\underline{\mathcal{S}}=(\mathcal{S}_i)_{i\in K}$ be a cover of 
$\mathds{T}$  by good open subsets. For $i_1, \dots, i_k\in K$, we set as usual $\mathcal{S}_{i_1 \dots i_k}:= \cap_{j} \mathcal{S}_{i_j}$. We define $L_i(R):=\Lie \St_{\mathcal{N}}(R)_{|\mathcal{S}_i}$. Let $\mathcal{T}\in H^{1}(\mathds{T}, \St_{\mathcal{N}}(R))$ and let $g=(g_{ij})$ be a cocycle representing $\mathcal{T}$. The identifications
\begin{eqnarray*}
L_i(R)_{|S_{ij}}    &     \overset{\sim}{\longrightarrow}  & L_j(R)_{|S_{ij}}  \\
   M   &            \longrightarrow  & g_{ij}^{-1}M g_{ij}
\end{eqnarray*}
allow to glue the $L_i(R)$ into a sheaf of $R$-Lie algebras over $\mathds{T}$ denoted by $\Lie \St_{\mathcal{N}}(R)^{\mathcal{T}}$ and depending only on $\mathcal{T}$ and not on $g$. 
Let us examine the first cocycle conditions in the Cech complex of $\Lie \St_{\mathcal{N}}(R)^{\mathcal{T}}$. For $t=(t_i)_{i\in K}\in \check{C}^{0}(\underline{\mathcal{S}}, \Lie \St_{\mathcal{N}}(R)^{\mathcal{T}})$, let $d_{i}$ be the unique representative of $t_i$ in $\Gamma(\mathcal{S}_{i}, L_i(R))$. Then
\begin{align*}
(dt)_{ij}& =t_i- t_j \\
           & = [d_i]-[d_j]\\
           & =[d_i- g_{ij}d_j g_{ij}^{-1}]
\end{align*}
where $[$ $]$ denotes the class of an element of $\Gamma(\mathcal{S}_{i}, L_i(R))$ in  $\Gamma(S_{ij},\Lie \St_{\mathcal{N}}(R)^{\mathcal{T}})$. 
For $t=(t_{ij})\in \check{C}^{1}(\underline{\mathcal{S}}, \Lie \St_{\mathcal{N}}(R)^{\mathcal{T}})$, let $\alpha_{ij}$ be the unique representative of $t_{ij}$ in $\Gamma(\mathcal{S}_{ij}, L_i(R))$. The relation $t_{ij}=- t_{ji}$ translates into $\alpha_{ji}=-g^{-1}_{ij}\alpha_{ij}g_{ij}$. Thus
\begin{align*}
(dt)_{ijk} &=t_{jk}+t_{ki}+t_{ij}\\  
&=[g_{ij}\alpha_{jk}g_{ij}^{-1}+g_{ik}\alpha_{ki}g_{ik}^{-1}+\alpha_{ij}]
\end{align*}
Let us now compute the tangent space of $H^{1}(\mathds{T},\St_{\mathcal{N}})$ at $\mathcal{T}$. 
\begin{lemme}\label{lemmecalculTangent}
There is a canonical isomorphism of $R$-modules
\begin{equation}\label{calculTangent}
\xymatrix{
T_{\mathcal{T}}H^{1}(\mathds{T}, \St_{\mathcal{N}})\ar[r]^-{\sim} & \check{H}^{1}(\underline{\mathcal{S}}, \Lie \St_{\mathcal{N}}(R)^{\mathcal{T}})
}
\end{equation}
\end{lemme}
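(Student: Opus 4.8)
The plan is to read the tangent space off the short exact sequence of sheaves of groups on $\mathds{T}$
\[
1 \longrightarrow \Lie\St_{\mathcal{N}}(R) \longrightarrow \St_{\mathcal{N}}(R[\epsilon]) \longrightarrow \St_{\mathcal{N}}(R) \longrightarrow 1,
\]
in which the first arrow is $f\mapsto\Id+\epsilon f$ --- an isomorphism onto a normal subgroup, the conjugation action of $\St_{\mathcal{N}}(R[\epsilon])$ on it factoring through the adjoint action of $\St_{\mathcal{N}}(R)$ because $\epsilon^2=0$ --- and the second is reduction modulo $\epsilon$. Over an $\mathcal{I}$-good open set $\mathcal{S}$ this last map is surjective on sections: by \eqref{inj} both groups are the subsets of $R[\epsilon]\otimes_{\mathds{C}}\End E$, respectively $R\otimes_{\mathds{C}}\End E$, of matrices $\Id+f$ with $p_afi_b=0$ unless $a<_{\mathcal{S}}b$, and reduction modulo $\epsilon$ is visibly onto; it is in any case split by $R\hookrightarrow R[\epsilon]$.

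I would then fix the cover $\underline{\mathcal{S}}=(\mathcal{S}_i)_{i\in K}$, a trivialisation of $\mathcal{T}$ over each $\mathcal{S}_i$ (available by \ref{crittri}) and the resulting cocycle $g=(g_{ij})$ with $g_{ij}g_{jk}=g_{ik}$. Given $\mathcal{T}'\in T_{\mathcal{T}}H^{1}(\mathds{T},\St_{\mathcal{N}})$, that is $\mathcal{T}'\in H^{1}(\mathds{T},\St_{\mathcal{N}}(R[\epsilon]))$ together with an identification $\mathcal{T}'(R)=\mathcal{T}$, Lemma \ref{crittri} applied to the $\mathds{C}$-algebra $R[\epsilon]$ makes $\mathcal{T}'$ trivial over each $\mathcal{S}_i$, and using the section $R\hookrightarrow R[\epsilon]$ I may choose a trivialisation over $\mathcal{S}_i$ reducing modulo $\epsilon$ to the fixed one for $\mathcal{T}$. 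The associated cocycle $g'=(g'_{ij})$ then reduces modulo $\epsilon$ to $g$, hence has the form $g'_{ij}=(\Id+\epsilon\alpha_{ij})g_{ij}$ with $\alpha_{ij}\in\Gamma(\mathcal{S}_{ij},\Lie\St_{\mathcal{N}}(R))=\Gamma(\mathcal{S}_{ij},L_i(R))$, and I would send $\mathcal{T}'$ to the class of $([\alpha_{ij}])$. Expanding $g'_{ji}=(g'_{ij})^{-1}$ and $g'_{ij}g'_{jk}=g'_{ik}$ modulo $\epsilon^2$ yields exactly $\alpha_{ji}=-g_{ij}^{-1}\alpha_{ij}g_{ij}$ and $\alpha_{ij}+g_{ij}\alpha_{jk}g_{ij}^{-1}=\alpha_{ik}$, so that $([\alpha_{ij}])$ is a Cech $1$-cocycle for $\Lie\St_{\mathcal{N}}(R)^{\mathcal{T}}$ in the sense of the computation preceding the statement --- here one uses that a section of $\Lie\St_{\mathcal{N}}(R)^{\mathcal{T}}$ over $\mathcal{S}_{ij}$, respectively $\mathcal{S}_{ijk}$, has a \emph{unique} representative in $L_i(R)$, so a cocycle relation between classes is the corresponding equality of representatives. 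Replacing the trivialisation of $\mathcal{T}'$ over $\mathcal{S}_i$ by another lift of the one on $\mathcal{T}$ amounts to multiplying it by some $\Id+\epsilon\beta_i$ with $\beta_i\in\Gamma(\mathcal{S}_i,\Lie\St_{\mathcal{N}}(R))$, which modifies $(\alpha_{ij})$ precisely by the coboundary $[\,\beta_i-g_{ij}\beta_jg_{ij}^{-1}\,]$ exhibited before the statement; hence the class of $([\alpha_{ij}])$ in $\check{H}^{1}(\underline{\mathcal{S}},\Lie\St_{\mathcal{N}}(R)^{\mathcal{T}})$ depends only on $\mathcal{T}'$.

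To finish, I would construct the inverse: given a Cech $1$-cocycle with representatives $\alpha_{ij}\in\Gamma(\mathcal{S}_{ij},L_i(R))$, the same expansion shows that $g'_{ij}:=(\Id+\epsilon\alpha_{ij})g_{ij}$ is a $\St_{\mathcal{N}}(R[\epsilon])$-valued cocycle reducing modulo $\epsilon$ to $g$, hence an element of $T_{\mathcal{T}}H^{1}(\mathds{T},\St_{\mathcal{N}})$, and changing $\alpha$ by a coboundary changes the resulting torsor by an isomorphism; this assignment is inverse to the previous one, so the map \eqref{calculTangent} is bijective. It is $R$-linear because for $r\in R$ the $R$-algebra endomorphism $\epsilon\mapsto r\epsilon$ of $R[\epsilon]$ pushes $\mathcal{T}'$ to the element with cochain $(r\alpha_{ij})$, while the addition on $T_{\mathcal{T}}H^{1}(\mathds{T},\St_{\mathcal{N}})$, defined via the square-zero extension $R[\epsilon]\times_R R[\epsilon]$, corresponds to addition of cochains. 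I expect the only genuinely non-formal ingredient to be Lemma \ref{crittri}, which is what permits every deformation to be expressed as a cocycle on the \emph{fixed} good cover $\underline{\mathcal{S}}$; once that is granted, the remaining content is the usual deformation-theoretic dictionary, and the one point needing care is keeping the left/right conventions for the cocycle $g$ and for the transition isomorphisms defining $\Lie\St_{\mathcal{N}}(R)^{\mathcal{T}}$ consistent with the formulas displayed just before the statement.
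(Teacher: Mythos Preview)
Your argument is correct and is essentially the paper's own proof, reorganised around the split short exact sequence $1\to\Lie\St_{\mathcal{N}}(R)\to\St_{\mathcal{N}}(R[\epsilon])\to\St_{\mathcal{N}}(R)\to1$; both write a lifting cocycle as $(\Id+\epsilon\alpha_{ij})g_{ij}$ and verify the same \v Cech identities, with the inverse constructed in the same way. The only point handled differently is well-definedness on isomorphism classes: the paper allows an arbitrary comparison $(c_i+d_i)$ between two cocycles for $\mathcal{T}_\epsilon$ and invokes Proposition~\ref{autoId} to force $c_i=\Id$, whereas you restrict from the outset to trivialisations of $\mathcal{T}'$ lifting the fixed ones on $\mathcal{T}$ --- which is legitimate precisely because that proposition makes the identification $\mathcal{T}'(R)\simeq\mathcal{T}$ unique, a fact you use implicitly but do not state.
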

\begin{proof}
Let $\mathcal{T}_{\epsilon} \in T_{\mathcal{T}}H^{1}(\mathds{T}, \St_{\mathcal{N}})$. A cocycle of $\mathcal{T}_{\epsilon}$ associated to the good cover $\underline{\mathcal{S}}$ can be written $(g_{ij}+\epsilon_{ij})$ where the $\epsilon_{ij}$ take value in the line $\mathds{C}\epsilon$. The relation $g_{ji}+\epsilon_{ji}=(g_{ij}+\epsilon_{ij})^{-1}$ is equivalent to 
$$
\epsilon_{ji}=-g_{ij}^{-1}\epsilon_{ij}g_{ij}^{-1}
$$
The cocycle condition is equivalent to 
$$
(g_{ij}+\epsilon_{ij})(g_{jk}+\epsilon_{jk})(g_{ki}+\epsilon_{ki})=\Id
$$
which is equivalent to the following equality in $\Gamma(S_{ij},  \Lie \St_{\mathcal{N}}(R))$
$$
\epsilon_{ij}g_{jk}g_{ki} + g_{ij}\epsilon_{jk}g_{ki} + g_{ij}g_{jk}\epsilon_{ki}=0
$$
That is 
\begin{equation}\label{relation1retour}
\epsilon_{ij}g_{ij}^{-1}+g_{ij}\epsilon_{jk}g_{ki} +g_{ki}^{-1}\epsilon_{ki}=0
\end{equation}
Let us set $\alpha_{ij}=\epsilon_{ij}g_{ij}^{-1}$ viewed as a section of $ L_i$ over $\mathcal{S}_{ij}$ and let $t_{ij}(\epsilon)$ be the class of $\alpha_{ij}$ in $\Lie \St_{\mathcal{N}}(R)^{\mathcal{T}}$. We have $t_{ii}(\epsilon)=[\alpha_{ii}]=0$ and
$$
t_{ji}(\epsilon)=[\alpha_{ji}]=[\epsilon_{ji}g_{ij}]=
-[g_{ij}^{-1}\epsilon_{ij}]=-[\alpha_{ij}]=-t_{ij}(\epsilon)
$$
The relations explicited in \ref{twisted} show that \eqref{relation1retour}  is equivalent to  
the  $t_{ij}(\epsilon)$ defining a cocycle of $\Lie \St_{\mathcal{N}}(R)^{\mathcal{T}}$. Its class in 
$\check{H}^{1}(\underline{\mathcal{S}}, \Lie \St_{\mathcal{N}}(R)^{\mathcal{T}})$ 
does not depend on the choice of a cocycle representing $\mathcal{T}_{\epsilon}$. Indeed, suppose that $(g_{ij}+\epsilon_{ij})_{ij}$ is cohomologous to $(g_{ij}+\nu_{ij})_{ij}$, that is
\begin{eqnarray*}
g_{ij}+\nu_{ij}=(c_i+d_i)(g_{ij}+\epsilon_{ij})(c_j+d_j)^{-1}
\end{eqnarray*}
where $(c_i+d_i)_{i\in K}\in \check{C}^{0}(\underline{\mathcal{S}}, \St_{\mathcal{N}}(R[\epsilon]))$. We obtain
\begin{equation}\label{unauto}
g_{ij}=c_i g_{ij} c_j^{-1}
\end{equation}
and
\begin{equation}\label{vasesimplifier}
\nu_{ij}=c_i \epsilon_{ij}c_j^{-1}+ d_i g_{ij} c_j^{-1}- c_i g_{ij} c_j^{-1} d_j c_{j}^{-1}
\end{equation}
Relation \eqref{unauto} implies that the $c_i$ define an automorphism of $\mathcal{T}$. From  \ref{autoId}, we deduce that $c_i= \Id$ for every $i$. Hence, \eqref{vasesimplifier} gives 
$$
\nu_{ij}= \epsilon_{ij}+ d_i g_{ij} - g_{ij}  d_j 
$$
Multiplying by $g_{ij}^{-1}$ and taking the class in $\Lie \St_{\mathcal{N}}(R)^{\mathcal{T}}$ gives 
$$
t_{ij}(\nu)= t_{ij}(\epsilon)+ [d_i - g_{ij}  d_j g_{ij}^{-1}]
$$
Hence, the morphism \eqref{calculTangent} is well-defined and is injective. One easily checks that it  is surjective.
\end{proof}
One can show similarly (but we will not need it) that $\check{H}^{2}(\underline{\mathcal{S}}, \Lie \St_{\mathcal{N}}(R)^{\mathcal{T}})$ provides an obstruction theory for the functor 
$H^{1}(\mathds{T},\St_{\mathcal{N}})$ at $\mathcal{T}$.

\section{Proof of Theorem \ref{bigtheorem}}\label{prooftheorem}

\subsection{Representability by an algebraic space}\label{repalgspace}
Let $\underline{\mathcal{S}}$ be a finite cover by $\mathcal{I}$-good open sets of $\mathds{T}$. Since the cocycle condition in 
$$
\prod_{\mathcal{S}, \mathcal{S}^{\prime}\in \underline{\mathcal{S}}}  \Gamma(\mathcal{S}\cap \mathcal{S}^{\prime}, \St_{\mathcal{N}})
$$
is algebraic, it defines a closed subscheme denoted by $Z( \underline{\mathcal{S}}, \St_{\mathcal{N}})$. From \ref{crittri}, the morphism of  presheaves 
$$
\xymatrix{
Z(\underline{\mathcal{S}}, \St_{\mathcal{N}}) \ar[r] & H^{1}(\mathds{T}, \St_{\mathcal{N}})
}
$$
is surjective. Hence $H^{1}(\mathds{T}, \St_{\mathcal{N}})$ is the quotient in the category of  presheaves  of $Z(\underline{\mathcal{S}}, \St_{\mathcal{N}})$ by the algebraic group
$$
\prod_{\mathcal{S}\in \underline{\mathcal{S}}}  \Gamma(\mathcal{S}, \St_{\mathcal{N}})
$$
From \ref{autoId}, this action is free. By Artin theorem \cite[6.3]{ArtinOnStack}, see also \cite[10.4]{LMB} and \cite[04S6]{SP}, we deduce that the sheaf associated to $H^{1}(\mathds{T}, \St_{\mathcal{N}})$ is representable by an algebraic space of finite type over $\mathds{C}$. From lemma  \ref{sheaf}, the functor $H^{1}(\mathds{T}, \St_{\mathcal{N}})$ is a sheaf for the etale topology on $\mathds{C}$-alg, so it is  isomorphic to its sheafification. Hence, $H^{1}(\mathds{T}, \St_{\mathcal{N}})$ is representable by an algebraic space of finite type over $\mathds{C}$. 
\subsection{A closed immersion in an affine scheme of finite type}\label{closedimmersionpreuve}
The morphism of algebraic spaces \eqref{injfamillespeciale} is a monomorphism of finite type. Hence, it is separated and quasi-finite \cite[Tag 0463]{SP}. Since a separated quasi-finite morphism is representable \cite[Tag 03XX]{SP}, we deduce that \eqref{injfamillespeciale}  is representable. Since $\St_{\mathcal{N}}\text{-}\Sk_{\mathscr{C}(\mathcal{I})}$ is a scheme, we deduce that $H^{1}(\mathds{T}, \St_{\mathcal{N}})$ is representable  by a scheme of finite type over $\mathds{C}$. We still denote  by $H^{1}(\mathds{T}, \St_{\mathcal{N}})$ this scheme. We are left to show that \eqref{injfamillespeciale} is a closed immersion. From \cite[18.12.6]{EGA4-4}, closed immersions are the same thing as proper 
monomorphisms. We are thus left to prove that \eqref{injfamillespeciale}  is proper.
By the valuative criterion for properness \cite[7.3.8]{EGA2}, we have to prove that if 
$R$ is a discrete valuation ring with field of fraction $K$, for 
$(\mathcal{T}_{\mathscr{C}(\mathcal{I})}, \iota_{\mathscr{C}(\mathcal{I})})\in 
\St_{\mathcal{N}}\text{-}\Sk_{\mathscr{C}(\mathcal{I})}(R)$ such that 
$$
(\mathcal{T}_{\mathscr{C}(\mathcal{I})}(K), \iota_{\mathscr{C}(\mathcal{I})}(K))=\sk_{\mathscr{C}(\mathcal{I})}(\mathcal{T}_K)
$$
where $\mathcal{T}_K\in H^{1}(\mathds{T}, 
\St_{\mathcal{N}}(K))$, there exists a (necessarily unique) $\mathcal{T}\in H^{1}
(\mathds{T}, \St_{\mathcal{N}}(R))$ such that $\sk_{\mathscr{C}(\mathcal{I})}
(\mathcal{T})=(\mathcal{T}_{\mathscr{C}(\mathcal{I})}, \iota_{\mathscr{C}(\mathcal{I})})$ (which then 
automatically implies $\mathcal{T}(K)=\mathcal{T}_K$). Since $R\longrightarrow K$ is injective, it is enough to prove that $\mathcal{T}_K$ is defined over $R$. \\ \indent
We make here an essential use of condition $(3)$ of \ref{mono}, so we depict it.
\begin{figure}[h]
\begin{center}
\includegraphics[height=1.5in,width=2in,angle=0]{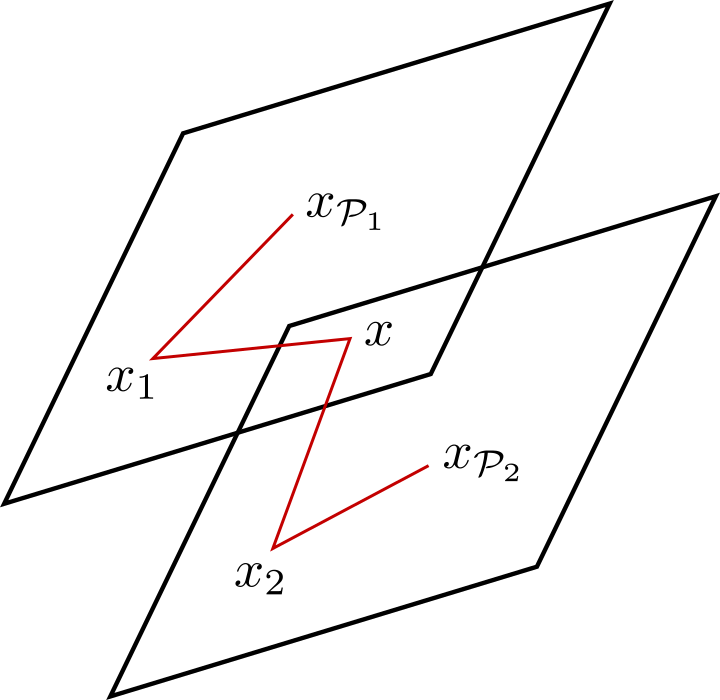}
\end{center}
\end{figure}
For $\mathcal{P}\in \mathscr{P}$, let us choose $t_{\mathcal{P}}\in \Gamma(\mathcal{P}, \mathcal{T}_K)$. By assumption, $x_\mathcal{P}$ belongs to a curve $C_{\mathcal{P}}\in \mathscr{C}(\mathcal{I})$. Let $s_{\mathcal{P}}\in \mathcal{T}_{C_{\mathcal{P}}, x_\mathcal{P}}$, and let us write 
$$
t_{\mathcal{P}, x_\mathcal{P}}=g_\mathcal{P} s_{\mathcal{P}}(K)  \text{ with  } g_\mathcal{P} \in \St_{\mathcal{N}}(K)_{x_\mathcal{P}}
$$
Since $\mathcal{P}$ is contained in an $\mathcal{I}$-good open set for $x_\mathcal{P}$, the section $g_\mathcal{P}$ extends to $\mathcal{P}$. Replacing 
$t_{\mathcal{P}}$ by $g_\mathcal{P}^{-1}t_{\mathcal{P}}$, we can thus suppose that
 $t_{\mathcal{P}, x_\mathcal{P}}=s_{\mathcal{P}}(K)$. 
Let $\mathcal{P}_1, \mathcal{P}_2\in \mathscr{P}$ such that $\mathcal{P}_1 \cap \mathcal{P}_2\neq \emptyset $. We have to show that the transition matrix between $t_{\mathcal{P}_1}$ and $t_{\mathcal{P}_2}$ take value in $R$.
For $i=1, 2$ we choose  $C_i\in \mathscr{C}(\mathcal{I})$ containing $[x_{\mathcal{P}_i}, x_i]$ and  $C_i^{\prime}\in \mathscr{C}(\mathcal{I})$ containing $[x_i, x_{12}]$. The torsors $\mathcal{T}_{C_i}(K)$ are trivial on $[x_{\mathcal{P}_i}, x_i]$. Since $R\longrightarrow K$ is injective, \ref{deRdansS}
ensures that the same is true for $\mathcal{T}_{C_i}$. Similarly, the torsors $\mathcal{T}_{C_i^{\prime}}$ are trivial on $[x_i, x_{12}]$. Let us choose $s_i \in  \Gamma([x_{\mathcal{P}_i}, x_i], \mathcal{T}_{C_i})$ and $s_i^{\prime} \in \Gamma([x_i, x],\mathcal{T}_{C_i^{\prime}})$. We have 
$$
t_{\mathcal{P}_i}= g_i s_i(K) \text{ in } \Gamma([x_{\mathcal{P}_i}, x_i], \mathcal{T}_K)
$$ 
where $g_i \in \Gamma([x_{\mathcal{P}_i}, x_i], \St_{\mathcal{N}}(K))$. Then
$$
t_{\mathcal{P}_i, x_{\mathcal{P}_i}}= g_{i, x_{\mathcal{P}_i}} s_{i, x_{\mathcal{P}_i}}(K)=s_{\mathcal{P}_i}(K)
$$
Since $s_{\mathcal{P}_i}$ and $s_i$ are both defined over $R$, so is $g_{i, x_{\mathcal{P}_i}}$. From \eqref{uneegalite}, we deduce that $g_i$ has coefficients in $R$. Hence, we have similarly on $[x_i, x_{12}]$
$$
t_{\mathcal{P}_i}=g_i^{\prime}s_i^{\prime}(K) \text{ with } g_i^{\prime}\in \Gamma([ x_i, x_{12}], \St_{\mathcal{N}}(R))
$$
Finally, $s_1^{\prime}$ and $s_2^{\prime}$ compare at $x_{12}$ in terms of a matrix $h$ with coefficient in $R$. If we write $t_{\mathcal{P}_2}= g_{12}t_{\mathcal{P}_1}$ on $\mathcal{P}_1 \cap \mathcal{P}_2$, we deduce 
$g_{12}= g_2^{\prime}hg_1^{\prime-1}$, so $g_{12}$ has  coefficients in $R$. This concludes the proof of Theorem \ref{bigtheorem}.

\section{The case where $\mathcal{I}$ is very good}

\subsection{Differential interpretration of $\St_{\mathcal{N}}$}\label{diffint}
From now on, we suppose that $\mathcal{I}$ is very good. This means that for every $a,b\in \mathcal{I}$ with $a \neq b$, the pole locus of $ a-b$ is exactly $D$. In particular, for every $R\in \mathds{C}$-alg, 
$$ \St_{\mathcal{N}}(R)=\Id+ R\otimes_{\mathds{C}}\mathcal{H}^{0}\DR^{<D}\End \mathcal{N}
$$

\subsection{Tangent space and irregularity}\label{tanandirr}
Let $(\mathcal{M}, \nabla, \iso)$ be a good $\mathcal{N}$-marked connection. A choice of local trivialisations for $\mathcal{T}:=\Isom_{\iso}(\mathcal{M}, \mathcal{N})$ gives rise to an isomorphism of sheaves on $\mathds{T}$
$$
\xymatrix{
\mathcal{H}^0 \DR^{<D}\End \mathcal{M}\ar[r]^-{\sim} & \Lie \St_{\mathcal{N}}(\mathds{C})^{\mathcal{T}}
}
$$ 
Hence, for every $i \in \mathds{N}$, there are canonical identifications
\begin{align*}
H^{i}(\mathds{T},  \Lie \St_{\mathcal{N}}(\mathds{C})^{\mathcal{T}})&\simeq  H^i( \mathds{T}, \mathcal{H}^0 \DR^{<D}\End \mathcal{M})  \\
 & \simeq 
      H^i(\mathds{T}, \DR^{<D}\End \mathcal{M})  \\
      & \simeq (\mathcal{H}^i\Irr^{\ast}_D\End \mathcal{M})_{0} 
\end{align*}
The second identification comes from the fact \cite[Prop. 1]{HienInv} that $\DR^{<D}\End \mathcal{M}$ is concentrated in degree $0$. The third identification comes from \cite[2.2]{SabRemar}. We deduce the following 
\begin{lemme}\label{tspace}
The tangent space of $H^{1}(\mathds{T}, \St_{\mathcal{N}})$  at $\mathcal{T}:=\Isom_{\iso}(\mathcal{M}, \mathcal{N})$ identifies in a canonical way to $(\mathcal{H}^1\Irr^{\ast}_D\End \mathcal{M})_{0}$. 
\end{lemme}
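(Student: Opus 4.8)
The plan is to obtain the statement by combining the algebraic computation of the tangent space in Lemma~\ref{lemmecalculTangent} with the three identifications recorded just above it. First I would apply Lemma~\ref{lemmecalculTangent} with $R=\mathds{C}$: for a finite cover $\underline{\mathcal{S}}$ of $\mathds{T}$ by $\mathcal{I}$-good open sets it gives a canonical isomorphism
$$
T_{\mathcal{T}}H^{1}(\mathds{T}, \St_{\mathcal{N}})\overset{\sim}{\longrightarrow}\check{H}^{1}(\underline{\mathcal{S}}, \Lie \St_{\mathcal{N}}(\mathds{C})^{\mathcal{T}}).
$$
Since $\mathcal{I}$-good open sets form a basis of $\mathds{T}$ (see~\ref{Igoodopen}) and the construction of \ref{lemmecalculTangent} is compatible with refinement, passing to the colimit identifies the right-hand side with the sheaf cohomology group $H^{1}(\mathds{T}, \Lie \St_{\mathcal{N}}(\mathds{C})^{\mathcal{T}})$; alternatively, after shrinking $\underline{\mathcal{S}}$ so that its finite intersections are again $\mathcal{I}$-good, one sees that $\underline{\mathcal{S}}$ is a Leray cover for $\Lie \St_{\mathcal{N}}(\mathds{C})^{\mathcal{T}}$, because on each such set $\mathcal{T}$ is trivial (Lemma~\ref{crittri}) and, after the choice of fundamental matrices of \eqref{inj}, the sheaf $\Lie \St_{\mathcal{N}}(\mathds{C})$ splits as a direct sum of extensions by zero of constant sheaves from the convex open halves $\{F_{a,b}<0\}$, which are acyclic on a convex set.

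Next I would bring in the isomorphism of sheaves on $\mathds{T}$
$$
\mathcal{H}^0 \DR^{<D}\End \mathcal{M}\overset{\sim}{\longrightarrow} \Lie \St_{\mathcal{N}}(\mathds{C})^{\mathcal{T}}
$$
attached to a choice of local trivialisations of $\mathcal{T}=\Isom_{\iso}(\mathcal{M},\mathcal{N})$: locally, conjugation by a section of $\mathcal{T}$ carries $\mathcal{H}^0\DR^{<D}\End \tilde{\mathcal{M}}$ onto $\mathcal{H}^0\DR^{<D}\End \tilde{\mathcal{N}}$, which is the local model of $\Lie \St_{\mathcal{N}}(\mathds{C})$ precisely because $\mathcal{I}$ is very good (see~\ref{diffint}). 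The point needing care is that a change of local trivialisation of $\mathcal{T}$ transforms this identification by the conjugation law $M\mapsto g_{ij}^{-1}Mg_{ij}$ used in~\ref{twisted} to glue the $L_i(\mathds{C})$ into $\Lie \St_{\mathcal{N}}(\mathds{C})^{\mathcal{T}}$, so the displayed isomorphism is independent of the trivialisations; applying $H^{1}(\mathds{T},-)$ then gives $H^{1}(\mathds{T}, \Lie \St_{\mathcal{N}}(\mathds{C})^{\mathcal{T}})\simeq H^{1}(\mathds{T}, \mathcal{H}^0 \DR^{<D}\End \mathcal{M})$.

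To conclude, I would invoke the two cited analytic inputs: by \cite[Prop.~1]{HienInv} the complex $\DR^{<D}\End \mathcal{M}$ is concentrated in degree $0$, so $H^{1}(\mathds{T}, \mathcal{H}^0 \DR^{<D}\End \mathcal{M})\simeq H^{1}(\mathds{T}, \DR^{<D}\End \mathcal{M})$, and by \cite[2.2]{SabRemar} the latter is canonically $(\mathcal{H}^1\Irr^{\ast}_D\End \mathcal{M})_{0}$; composing all the identifications yields the lemma. I expect the genuine work to lie in the second paragraph — checking that the local differential identifications glue compatibly with the twisting cocycle of $\mathcal{T}$, so that the whole composite is canonical and independent of the auxiliary choices (cover, trivialisations, representing cocycle of $\mathcal{T}$) — while the Čech-to-derived comparison and the statements of \cite{HienInv} and \cite{SabRemar} serve as black boxes.
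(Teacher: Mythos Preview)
Your proposal is correct and follows essentially the same route as the paper: apply Lemma~\ref{lemmecalculTangent} over $\mathds{C}$, pass from \v{C}ech cohomology on $\mathcal{I}$-good covers to sheaf cohomology via compatibility with refinements (the paper uses exactly this paracompactness/colimit argument rather than a Leray-cover justification), and then compose with the three identifications $H^{1}(\mathds{T},\Lie\St_{\mathcal{N}}(\mathds{C})^{\mathcal{T}})\simeq H^{1}(\mathds{T},\mathcal{H}^{0}\DR^{<D}\End\mathcal{M})\simeq H^{1}(\mathds{T},\DR^{<D}\End\mathcal{M})\simeq(\mathcal{H}^{1}\Irr^{\ast}_{D}\End\mathcal{M})_{0}$, which the paper records just before the lemma and treats as already established. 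Your extra care in checking that the local isomorphism $\mathcal{H}^{0}\DR^{<D}\End\mathcal{M}\simeq\Lie\St_{\mathcal{N}}(\mathds{C})^{\mathcal{T}}$ is independent of the trivialisations is a welcome elaboration of what the paper simply asserts.
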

\begin{proof}
Since $\mathds{T}$ is paracompact, sheaf cohomology is computed by Cech cohomology. Moreover, $\mathcal{I}$-good opens form a basis of the topology of $\mathds{T}$.
For two such covers $\underline{\mathcal{U}}$ and $\underline{\mathcal{V}}$ with $\underline{\mathcal{V}}$ refining  $\underline{\mathcal{U}}$, lemma \ref{lemmecalculTangent} provides a commutative diagram
$$
\xymatrix{
& T_{\mathcal{T}}H^{1}(\mathds{T}, \St_{\mathcal{N}})\ar[rd] \ar[ld] & \\
 \check{H}^{1}(\underline{\mathcal{U}}, \Lie \St_{\mathcal{N}}(\mathds{C})^{\mathcal{T}})  \ar[rr]&  &  \check{H}^{1}(\underline{\mathcal{V}}, \Lie \St_{\mathcal{N}}(\mathds{C})^{\mathcal{T}})
}
$$
where the diagonal arrows are isomorphisms. Hence, the vertical arrow is an isomorphism and \ref{tspace} is proved.

\end{proof}
\subsection{Proof of Theorem \ref{restriction}}
Unramified morphisms of finite type are quasi-finite. Hence, it is enough to prove that a good $\mathcal{N}$-marked connection $(\mathcal{M}, \nabla, \iso)$ belongs to the unramified locus of $\res_V$, which is open. The cotangent sequence of $\res_V$ reads
$$
\xymatrix{
 \res_V^{\ast} \Omega^{1}_{H^{1}(\mathds{T}^{\prime}, \St_{\mathcal{N}_V})}     \ar[r] &   \Omega^{1}_{H^{1}(\mathds{T}, \St_{\mathcal{N}})}   \ar[r] &     \Omega^{1}_{\res_V} \ar[r] & 0
}
$$
Taking the fiber at  $\mathcal{T}:=\Isom_{\iso}(\mathcal{M}, \mathcal{N})$ preserves cokernel, so after dualizing,  we obtain the following exact sequence
\begin{equation}\label{ptitexasequ}
\xymatrix{
  0    \ar[r] & \Omega^{1}_{\res_V}(\mathcal{T})^{\vee} \ar[r] &   T_{\mathcal{T}}H^{1}(\mathds{T}, \St_{\mathcal{N}})  \ar[r] &   T_{ \res_V(\mathcal{T})}H^{1}(\mathds{T}^{\prime}, \St_{\mathcal{N}_V})
}   
\end{equation}
By Nakayama lemma, we have to prove that $\Omega^{1}_{\res_V}(\mathcal{T} )$ vanishes. Let $i_V: V\longrightarrow \mathds{C}^n$ be the canonical inclusion.
 Since  $\mathcal{M}$ is localized at $0$, we have
$$
(\Irr^{\ast}_{D}\End \mathcal{M})_0 \simeq \Irr^{\ast}_{0}\End \mathcal{M}
$$
Applying $\Irr^{\ast}_{0}$ \cite[3.4-2]{Mehbsmf}  to the local cohomology triangle \cite{Kalivre}
$$
\xymatrix{
i_{V+}i_{V}^+\End \mathcal{M}[\dim V-n]\ar[r]& \End \mathcal{M} \ar[r]& \End \mathcal{M}(\ast V)\ar[r]^-{+1}& 
}
$$
gives a distinguished triangle
$$
\xymatrix{
(\Irr^{\ast}_{V}\End \mathcal{M})_0 \ar[r]&  \Irr^{\ast}_{0}\End \mathcal{M}\ar[r]& 
\Irr^{\ast}_{0}\End \mathcal{M}_{V}
\ar[r]^-{+1}& 
}
$$
Hence, we obtain an exact sequence 
\begin{equation}\label{ptitexasequ2}
\xymatrix{
0\ar[r]& (\mathcal{H}^1\Irr^{\ast}_{V}\End \mathcal{M})_0
 \ar[r]& \mathcal{H}^1\Irr^{\ast}_{0}\End \mathcal{M}\ar[r]& \mathcal{H}^1\Irr^{\ast}_{0}\End \mathcal{M}_{V} 
}
\end{equation}
From \ref{tspace}, the sequence \eqref{ptitexasequ} identifies canonically to \eqref{ptitexasequ2}. Hence, we are left to show the following
\begin{proposition} 
For every $\mathcal{N}$-marked connection $\mathcal{M}$, we have $$(\mathcal{H}^1\Irr^{\ast}_{V}\mathcal{M})_0\simeq 0$$
\end{proposition}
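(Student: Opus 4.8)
The plan is to prove something slightly stronger: that $(\Irr^{\ast}_V\mathcal{M})_0$ is a complex concentrated in cohomological degrees $\geq 2$, so that in particular its $\mathcal{H}^1$ vanishes. Recall that by construction $(\Irr^{\ast}_V\mathcal{M})_0=\Irr^{\ast}_0\bigl(i_{V+}i_V^+\mathcal{M}[\dim V-n]\bigr)$. First I would analyse $i_V^+\mathcal{M}$: since $\mathcal{M}$ is locally free over $\mathcal{O}_{\mathds{C}^n,0}(\ast D)$, hence flat over $\mathcal{O}_{\mathds{C}^n,0}$, this derived restriction is concentrated in degree $0$ and coincides with $i_V^{\ast}\mathcal{M}$; and because $V$ is transverse to every irreducible component of $D$ and $\mathcal{I}$ is very good, the meromorphic connection $\mathcal{M}_V:=i_V^{\ast}\mathcal{M}$ on $V$ is again \emph{good}, with poles along the normal crossing divisor $D_V:=V\cap D$. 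This is the step where transversality is essential; it rests on the fact — already used for curves — that restriction to a transverse submanifold preserves goodness and the order relations on the set of irregular values. So the object to understand is $\Irr^{\ast}_0(i_{V+}\mathcal{M}_V)[\dim V-n]$.

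Next I would feed in Mebkhout's results on the irregularity complex. The compatibility of $\Irr^{\ast}$ with the closed immersion $i_V$ — of the same kind as the compatibility \cite[3.4-2]{Mehbsmf} already invoked to identify the third term of the local cohomology triangle — identifies $\Irr^{\ast}_0(i_{V+}\mathcal{M}_V)$, up to a non-negative cohomological shift, with the irregularity complex at $0$ of the good meromorphic connection $\mathcal{M}_V$ on $V$. By Mebkhout's perversity theorem \cite{Mehbgro} the latter is concentrated in strictly positive degrees; alternatively — and without leaving the paper — the last two identifications of \ref{tanandirr}, which are valid for any good meromorphic connection, exhibit it as $H^{\bullet}(\mathds{T}',\mathcal{H}^0\DR^{<D_V}\mathcal{M}_V)$, whose $\mathcal{H}^0$ is the space of global sections over $\mathds{T}'$ of the sheaf $\mathcal{H}^0\DR^{<D_V}\mathcal{M}_V$, and this is zero, there being no nonzero flat section asymptotic to $0$ along $D_V$ over all of $\mathds{T}'$. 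Hence $\Irr^{\ast}_0(i_{V+}\mathcal{M}_V)$ is concentrated in degrees $\geq 1$.

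Now the conclusion is immediate: $V$ being a proper submanifold, $\dim V-n=-\operatorname{codim}_{\mathds{C}^n}V\leq-1$, and shifting a complex concentrated in degrees $\geq 1$ by $[\dim V-n]$ yields one concentrated in degrees $\geq 1+\operatorname{codim}_{\mathds{C}^n}V\geq 2$. Therefore $\mathcal{H}^1(\Irr^{\ast}_V\mathcal{M})_0=0$, which is the assertion. Read off the long exact sequence of the triangle $(\Irr^{\ast}_V\mathcal{M})_0\to\Irr^{\ast}_0\mathcal{M}\to\Irr^{\ast}_0\mathcal{M}_V\to{}+1$ together with the vanishing of $\mathcal{H}^0\Irr^{\ast}_0\mathcal{M}_V$ (the same positivity), this says equivalently that restriction to $V$ is injective on $\mathcal{H}^1$ of the irregularity — precisely the injectivity of $T_{(\mathcal{M},\nabla,\iso)}\res_V$ used in the proof of Theorem \ref{restriction}.

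The only part demanding real care is the first paragraph: checking that transversality really makes $\mathcal{M}_V$ a good meromorphic connection, and that passing to $\Irr^{\ast}_0$ turns the local cohomology object $R\Gamma_{[V]}\mathcal{M}$ into the stated shift of the irregularity of $\mathcal{M}_V$ on $V$ — that is, that the compatibilities of $\Irr^{\ast}$ with non-characteristic restriction and with direct image under the closed immersion $i_V$ apply here, with the correct shifts. Once that bookkeeping is in place, the vanishing is forced purely by the negativity of $\dim V-n$ together with Mebkhout's positivity, with no further argument.
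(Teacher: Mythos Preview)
Your approach is genuinely different from the paper's, and it is worth spelling out the contrast.

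The paper does \emph{not} attempt a direct identification of $(\Irr^{\ast}_V\mathcal{M})_0$ with a shifted irregularity complex on $V$. Instead it blows up the origin $p:X\to\mathds{C}^n$, rewrites the question as the vanishing of $H^0(E,(\mathcal{H}^1\Irr^{\ast}_{E\cup V'}p^+\mathcal{M})_{|E})$ via compatibility of $\Irr^{\ast}$ with proper push-forward, and then argues on the exceptional divisor $E$: away from the strict transform $D'$ of $D$, the pulled-back connection has good formal structure along the \emph{smooth} divisor $E$, so $V'$ is non-characteristic there and Cauchy--Kowalevski kills the irregularity along $V'$ at any point of $\Omega\cap V'$. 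Mebkhout's perversity then forces any global section of $\mathcal{H}^1$ to have support in $E\cap D'$, hence of dimension $<\dim E$, hence to vanish. The blow-up is doing real work: it replaces the point $0$, where $V$ is \emph{not} non-characteristic for $\mathcal{M}$ (the characteristic variety contains the full cotangent fibre there), by a situation where non-characteristicness holds generically.

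Your route, by contrast, tries to compute $(\Irr^{\ast}_V\mathcal{M})_0=\Irr^{\ast}_0\bigl(i_{V+}\mathcal{M}_V[\dim V-n]\bigr)$ directly. If one grants that $\Irr^{\ast}_0$ commutes with the closed immersion $i_{V+}$ with no shift (this is Mebkhout's compatibility with proper direct image, applied to $i_V$) and that $\Irr^{\ast}$ is covariant in shifts, one gets $(\Irr^{\ast}_V\mathcal{M})_0\simeq\Irr^{\ast,V}_0(\mathcal{M}_V)[\dim V-n]$, and then positivity of $\Irr^{\ast,V}_0(\mathcal{M}_V)$ together with $\dim V-n\leq-1$ finishes. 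This is short and appealing. But you yourself flag the crux: the ``non-negative cohomological shift'' is asserted, not computed, and the whole argument collapses if the net shift is zero (as it would be, for instance, under a $\Sol$-based convention where the contravariance cancels the $[\dim V-n]$). You need to pin down \emph{exactly} which variant of $\Irr^{\ast}$ is in play, verify the proper-push-forward compatibility for it with the correct shift, and check that $\mathcal{M}_V$ is good along $D_V$ in a neighbourhood of $0$ (not just at $0$) so that the real-blow-up description of $\Irr^{\ast,V}_0$ applies. None of this is done.

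In short: your strategy is more elementary and, once the bookkeeping is nailed down, likely correct; the paper's blow-up argument is longer but self-contained, relying only on perversity and non-characteristic restriction at points where the latter genuinely holds.
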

\begin{proof}
Let $p: X\longrightarrow \mathds{C}^{n}$ be the blow-up at the origin. Let us denote by $E$ the exceptional divisor,  $V^{\prime}$ (resp. $D^{\prime}$) the strict transform of $V$ (resp. $D$). Since $p$ is an isomorphism above $\mathds{C}^{n}\setminus D$, we know from  \cite[3.6-4]{Mehbsmf}   that $p_+ p^{+ }\mathcal{M} \longrightarrow \mathcal{M}$
is an isomorphism. Applying $\Irr^{\ast}_{V}$ and the compatibility of $\Irr^{\ast}$ with proper push-forward, we obtain  
\begin{align*}
(\mathcal{H}^1\Irr^{\ast}_{V}\mathcal{M})_0 & \simeq H^{1}({E},(\Irr^{\ast}_{p^{-1}(V)}p^{+}\mathcal{M})_{|E})\\
              &  \simeq     H^{0}({E},(\mathcal{H}^{1}\Irr^{\ast}_{E\cup V^{\prime}}p^{+}\mathcal{M})_{|E})
\end{align*}
Set $\Omega:= E\setminus (E\cap D^{\prime})$. Since $V$ is transverse to every irreducible components of $D$, the set $\Omega \cap V^{\prime}$ is not empty. Pick $d\in \Omega \cap V^{\prime}$. Since $E$ is included in the pole locus of $p^{+}\mathcal{M}$,  
\begin{equation}\label{isoirr}
(\Irr^{\ast}_{E\cup V^{\prime}}p^{+}\mathcal{M})_d \simeq (\Irr^{\ast}_{V^{\prime}}p^{+}\mathcal{M})_d
\end{equation}
Moreover, the multiplicities of the components of $\Char p^{+}\mathcal{M}$
passing through $x\in \Omega$ only depend on the formalization of $p^{+}\mathcal{M}$ at $x$. Since $p^{+}\mathcal{M}$ has good formal decomposition along the smooth divisor $\Omega$, we deduce that above $\Omega$, $\Char \mathcal{M}$ is supported on the conormal bundle of  $\Omega$. In particular, $V^{\prime}$ is non-characteristic for $p^{+}\mathcal{M}$ at $d$. Thus,  \cite{TheseKashiwara} asserts that 
the Cauchy-Kowaleska  morphism
$$
\RHom(p^+\mathcal{M}, \mathcal{O}_X)_{|V^{\prime}}\longrightarrow \RHom((p^+\mathcal{M})_{V^{\prime}}, \mathcal{O}_{V^{\prime}})
$$
is an isomorphism in a neighbourhood of $d$. From \cite[V 2.2]{MT}, we deduce that the right-hand side of  \eqref{isoirr} is zero. \\ \indent
Take  $s\in H^{0}({E},(\mathcal{H}^{1}\Irr^{\ast}_{E\cup V^{\prime}}p^{+}\mathcal{M})_{|E})$. Since 
$$
(\Irr^{\ast}_{E\cup V^{\prime}}p^{+}\mathcal{M})_{|E}\simeq \Irr^{\ast}_{E}(p^{+}\mathcal{M})(\ast V^{\prime})
$$
we know from \cite{Mehbgro} that the complex $(\Irr^{\ast}_{E\cup V^{\prime}}p^{+}\mathcal{M})_{|E}[1]$ is a perverse sheaf on $E$. So to prove $s=0$, we are left to prove that the support of $s$ is contained in a closed subset of dimension $<\dim E$ \cite[(10.3.3)]{KS}. Hence, it is enough to prove that $s$ vanishes on $\Omega$. From the discussion above, $s$ vanishes on a neighbourhood $U$ of $\Omega \cap V^{\prime}$ in  $\Omega$. Since $\Omega$ is path-connected, a point in $\Omega \setminus (\Omega \cap V^{\prime})$ can be connected  to a point in $U \setminus (U \cap V^{\prime})$ via a path in $\Omega \setminus (\Omega \cap V^{\prime})$. So we are left to see that  $\mathcal{H}^{1}\Irr^{\ast}_{E\cup V^{\prime}}p^{+}\mathcal{M}$ is a local system on $\Omega \setminus (\Omega \cap V^{\prime})$. This is a consequence of the following
\begin{lemme}
Let $X$ be a smooth manifold and let $Z$ be a smooth divisor of $X$. Let $\mathcal{M}$ be a meromorphic connection on $X$ with poles along $Z$ admitting a good formal structure along $Z$. Then $\Irr^{\ast}_Z\mathcal{M}$ is a local system concentrated in degree 1.
\end{lemme}
We can see this as a particular case of Sabbah theorem \cite{SabRemar}. Let us give an elementary argument. Since $\mathcal{M}$ has good formal structure along $Z$, any smooth curve transverse to $Z$ is non-characteristic for $\mathcal{M}$. Hence, $\Irr^{\ast}_Z\mathcal{M}$  is concentrated in degree 1 and $x\rightarrow \dim \mathcal{H}^1(\Irr^{\ast}_Z\mathcal{M})_x$ is constant to the generic irregularity of $\mathcal{M}$ along $Z$. We know from  \cite{Mehbgro} that $\Irr^{\ast}_Z\mathcal{M}[1]$ is perverse on $Z$. We conclude using the fact that a perverse sheaf on $Z$ with constant Euler-Poincaré characteristic function is a local system concentrated in degree 0.

\end{proof}

\subsection{Proof of Theorem \ref{rigidity}}
Let $(\mathcal{M}, \nabla, \iso)$ be a good $\mathcal{N}$-marked connection. Since $H^{1}(\mathds{T}, \St_{\mathcal{N}})$ is of finite type, it is enough to prove that the tangent space of $H^{1}(\mathds{T}, \St_{\mathcal{N}})$ at $\Isom_{\iso}(\mathcal{M}, \mathcal{N})$ vanishes. From \ref{tanandirr}
and Sabbah invariance theorem \cite{SabRemar}, we are left to prove the vanishing of $(\mathcal{H}^1\Irr^{\ast}_D\End \mathcal{N})_{0}$ for a generic choice of the regular parts $\mathcal{R}_a$, $a\in \mathcal{I}$. Generically, the monodromy is semi-simple, so  we are left to prove that for every $i \in \mathds{N}$,
\begin{equation}\label{vanishing}
(\mathcal{H}^i\Irr^{\ast}_D z^{\alpha} \mathcal{E}^{a})_0\simeq 0
\end{equation}
where $\alpha=(\alpha_1, \dots, \alpha_m)\in \mathds{C}^m$ is generic,  $z^{\alpha}=z_1^{\alpha_1}\cdots z_m^{\alpha_m}$ and $a$ is a good meromorphic function with poles along $D$. 
By a change of variable, we can suppose $a=1/z_1^{a_1}\cdots  z_m^{a_m}$ where  $a_i\in \mathds{N}^{\ast}$, $i=1, \dots, m$.  We are thus left to prove the following 
\begin{lemme}\label{quasidernierlemme}
Suppose that there exists $i, j\in \llbracket 1,m\rrbracket$ with  $i\neq j$ such that  
$$
\alpha_i a_i +\alpha_j a_j \notin \mathds{Z}
$$
Then, $(\Irr^{\ast}_D z^{\alpha}\mathcal{E}^{a})_{0}\simeq 0$.
\end{lemme}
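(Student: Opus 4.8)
The plan is to transcribe, for the explicit rank one connection $z^{\alpha}\mathcal{E}^{a}$, the blow-up argument used in the proof of the Proposition above, reducing the vanishing to a statement about the cohomology of a torus with a rank one local system which the arithmetic hypothesis controls. By \ref{tanandirr} it is equivalent to prove $(\mathcal{H}^{i}\Irr^{\ast}_{D}(z^{\alpha}\mathcal{E}^{a}))_{0}=0$ for all $i$, where $a=1/(z_{1}^{a_{1}}\cdots z_{m}^{a_{m}})$. Let $p\colon X\to\mathds{C}^{m}$ be the blow-up of the origin, with exceptional divisor $E\simeq\mathds{P}^{m-1}$, and let $D'$ be the strict transform of $D$. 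As in \cite[3.6-4]{Mehbsmf} one has $p_{+}p^{+}(z^{\alpha}\mathcal{E}^{a})\simeq z^{\alpha}\mathcal{E}^{a}$, so compatibility of $\Irr^{\ast}$ with proper push-forward gives
\[
(\mathcal{H}^{i}\Irr^{\ast}_{D}(z^{\alpha}\mathcal{E}^{a}))_{0}\ \simeq\ H^{i}\bigl(E,\,(\Irr^{\ast}_{E\cup D'}p^{+}(z^{\alpha}\mathcal{E}^{a}))|_{E}\bigr).
\]
In the chart $z_{1}=u_{1}$, $z_{k}=u_{1}u_{k}$ $(k\geq 2)$ one computes $p^{+}(z^{\alpha}\mathcal{E}^{a})\simeq u_{1}^{\alpha_{1}+\dots+\alpha_{m}}u_{2}^{\alpha_{2}}\cdots u_{m}^{\alpha_{m}}\otimes\mathcal{E}^{1/(u_{1}^{a_{1}+\dots+a_{m}}u_{2}^{a_{2}}\cdots u_{m}^{a_{m}})}$, which has good formal structure along the smooth divisor $E=\{u_{1}=0\}$ away from the other components; here the very goodness of $\mathcal{I}$, i.e. that $a$ has poles along every component of $D$, is used so that $E$ lies entirely in the pole locus. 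Hence, by the smooth-divisor lemma proved above, $(\Irr^{\ast}_{E\cup D'}p^{+}(z^{\alpha}\mathcal{E}^{a}))|_{\Omega}$ is a local system $\mathcal{L}'$ concentrated in degree $1$ on the open torus $\Omega:=E\setminus(E\cap D')\simeq(\mathds{C}^{\ast})^{m-1}$, of rank $a_{1}+\dots+a_{m}$.

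Next, since $(\Irr^{\ast}_{E\cup D'}p^{+}(z^{\alpha}\mathcal{E}^{a}))|_{E}[1]$ is perverse on $E$ by \cite{Mehbgro}, the same mechanism as in the Proposition above — using that $\Omega$ is path-connected and that the complex is a local system there, so that nothing can be supported on a proper closed subset — reduces the required vanishing to $H^{\ast}(\Omega,\mathcal{L}')=0$. By the Künneth formula this amounts to the Koszul complex of the commuting monodromy operators $M_{2},\dots,M_{m}$ of $\mathcal{L}'$ around the coordinate loops being acyclic, i.e. to $1$ not being a common eigenvalue of the $M_{k}$. Each $M_{k}$ is the product of the scalar $e^{2\pi i\alpha_{k}}$ coming from the regular factor $u_{k}^{\alpha_{k}}$ with a permutation of the $a_{1}+\dots+a_{m}$ horizontal branches coming from the turning of the Stokes structure of $\mathcal{E}^{1/(u_{1}^{a_{1}+\dots+a_{m}}u_{2}^{a_{2}}\cdots)}$ as $u_{k}$ winds once around $0$. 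Carrying out this bookkeeping along the two distinguished indices $i$ and $j$, the non-resonance hypothesis $\alpha_{i}a_{i}+\alpha_{j}a_{j}\notin\mathds{Z}$ is precisely what prevents $1$ from being a joint eigenvalue of the $M_{k}$, whence the vanishing.

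\textbf{The main obstacle} is this last point: identifying the operators $M_{k}$ — in particular their permutation part, governed by how the exponential factor $e^{a}$ pulls back and rotates the Stokes rays along $E$ — precisely enough to see that the sharp condition $\alpha_{i}a_{i}+\alpha_{j}a_{j}\notin\mathds{Z}$ suffices, rather than mere genericity of $\alpha$. A secondary technical point is to make the reduction to $\Omega$ fully rigorous, i.e. to control $(\Irr^{\ast}_{E\cup D'}p^{+}(z^{\alpha}\mathcal{E}^{a}))|_{E}$ near $E\cap D'$, where the pulled-back connection is irregular along two crossing components; the perversity statement of \cite{Mehbgro} together with the connectedness argument of the Proposition above is designed exactly to absorb this. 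Once these are in place, the remaining steps — the blow-up reduction, the application of the smooth-divisor lemma, and the Künneth computation — are routine.
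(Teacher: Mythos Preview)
Your approach differs substantially from the paper's. The paper gives a direct and elementary argument on the real torus $\mathds{T}$ itself: via the identification $(\Irr^{\ast}_D z^{\alpha}\mathcal{E}^{a})_{0}\simeq R\Gamma(\mathds{T}, \mathcal{H}^{0}\DR^{<D}z^{\alpha}\mathcal{E}^{a})$, one observes that $\mathcal{H}^{0}\DR^{<D}z^{\alpha}\mathcal{E}^{a}$ is the extension by zero of the \emph{rank-one} local system $L_{\alpha}=\DR z^{\alpha}$ from the open set $U=\rho^{-1}(]\pi/2,3\pi/2[)$, where $\rho(\theta_1,\dots,\theta_m)=\sum_k a_k\theta_k$. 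Two successive applications of the Leray spectral sequence (first for $\rho$, then for the projection onto the coordinates $\theta_3,\dots,\theta_m$) reduce everything to the cohomology of a rank-one local system on a single circle, whose monodromy is read off directly and seen to be nontrivial under the hypothesis. No blow-up, no perversity, no higher-rank local system.

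Your blow-up route has two genuine gaps, both of which you flag yourself.

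First, the reduction from $H^{\ast}(E,\cdot)$ to $H^{\ast}(\Omega,\mathcal{L}')$ is \emph{not} the same mechanism as in the Proposition. There, perversity was invoked to show that a single global section of $\mathcal{H}^{1}$ must vanish, because its support would otherwise have dimension $<\dim E$. Here you need the vanishing of the full hypercohomology $H^{i}(E,\cdot)$ for every $i$. Knowing that $(\Irr^{\ast}_{E\cup D'}p^{+}(z^{\alpha}\mathcal{E}^{a}))|_{E}[1]$ is perverse and restricts to a local system on $\Omega$ does not by itself let you discard the contribution of $E\cap D'$ to the higher $H^{i}$; you would need an additional argument controlling $i^{\ast}$ or $i^{!}$ at the crossing locus, or identifying the complex with an intermediate extension.

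Second, and more seriously, the computation of the monodromy operators $M_{k}$ on the rank-$(a_{1}+\cdots+a_{m})$ local system $\mathcal{L}'$ is left entirely undone. This is the heart of the matter: until the $M_{k}$ are written down explicitly, one cannot check that the sharp arithmetic condition $\alpha_{i}a_{i}+\alpha_{j}a_{j}\notin\mathds{Z}$, rather than mere genericity of $\alpha$, forces $1$ not to be a joint eigenvalue. Your sketch (``scalar $e^{2\pi i\alpha_k}$ times a permutation'') is plausible but is not a proof.

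The paper's argument sidesteps both obstacles at once: working directly on $\mathds{T}$ with the rank-one $L_{\alpha}$, there is no boundary stratum to control and the relevant monodromy is immediate. The paper in fact remarks that an earlier draft contained a perversity-based proof along lines closer to yours, but that it was replaced by the present simpler argument.
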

In a first draft of this paper, a proof using  perversity arguments was given. We give here a simpler and more natural argument, kindly communicated to us by C. Sabbah.
\begin{proof}
From \cite[2.2]{SabRemar} and \cite[Prop. 1]{HienInv}, we have
$$
(\Irr^{\ast}_D z^{\alpha}\mathcal{E}^{a})_{0}\simeq R\Gamma( \mathds{T}, \mathcal{H}^{0}\DR^{<D}z^{\alpha}\mathcal{E}^{a})
$$
Let $\rho : \mathds{T}\longrightarrow S^1$ be the morphism $(\theta_1, \dots, \theta_m)\longrightarrow \sum_{i=1}^m a_i \theta_i$.  Then, $\mathcal{H}^{0}\DR^{<D}z^{\alpha}\mathcal{E}^{a}$ is the extension by $0$ of the restriction $L_{\alpha}$ of $\DR z^{\alpha}$ to $U:=\rho^{-1}(]\pi/2, 3\pi/2[)$. By Leray spectral sequence 
$$
E_{2}^{pq}=H_c^p(]\pi/2, 3\pi/2[, R^q\rho_{\ast} L_{\alpha}) \Longrightarrow R\Gamma_c(U, L_{\alpha})
$$
and proper base change, we are left to prove
$$
R\Gamma(\rho^{-1}(\theta), L_{\alpha})\simeq 0
$$
for every $\theta\in S^{1}$. Note that $\rho^{-1}(\theta)\simeq (S^1)^{m-1}$. By homotopy, it is enough to treat the case $\theta=0$. Without loss of generality, we can suppose $\alpha_1 a_1 +\alpha_2 a_2 \notin \mathds{Z}$. Let $p: \rho^{-1}(0)\longrightarrow  (S^1)^{m-2}$ be the restriction to $\rho^{-1}(0)$ of the projection on coordinates $\theta_3, \dots, \theta_n$. By the same argument as above, we are left to show that the restriction of  $ L_{\alpha}$ to the circles $p^{-1}(\theta), \theta \in (S^1)^{m-2}$ has no cohomology. Since the monodromy of such a restriction is the multiplication by $e^{2\pi i (\alpha_1 a_1 +\alpha_2 a_2)}$, we are done.
\end{proof}

\bibliographystyle{amsalpha}
\bibliography{ModuliBibli}

\end{document}